\newcommand{\Z}{\mathbb{Z}}
\newcommand{\R}{\mathbb{R}}
\newcommand{\N}{\mathbb{N}}
\newcommand{\C}{\mathbb{C}}
\newcommand{\E}{\mathbb{E}}
\newcommand{\mc}{\mathcal}
\newcommand{\mb}{\mathbb}
\newcommand{\mf}{\mathfrak}
\newcommand{\eps}{\varepsilon}
\newcommand{\ind}{{\bf 1}}
\renewcommand{\P}{\mathbb{P}}
\renewcommand{\H}{\mathbb{H}}
\newcommand{\Lap}{\Delta\!}
\DeclareMathOperator{\Vir}{Vir}
\DeclareMathOperator{\End}{End}
\DeclareMathOperator{\Aut}{Aut}
\DeclareMathOperator{\Ker}{Ker}
\DeclareMathOperator{\Tr}{Tr}
\DeclareMathOperator{\Id}{Id}
\DeclareMathOperator{\SLE}{SLE}
\DeclareMathOperator{\Diff}{Diff}
\DeclareMathOperator{\Det}{Det}
\newcommand{\thetaf}[2]{\vartheta\left [\!\!\begin{array}{c}#1\\#2\end{array}\!\!\right]}
\title{SLE and Virasoro representations: localization}
\author{Julien Dub\'edat\footnote{Partially supported by NSF grant DMS-1005749 and the Alfred P. Sloan Foundation.}}
\newtheorem{thm}{Theorem}
\newtheorem{Thm}[thm]{Theorem}
\newtheorem{Prop}[thm]{Proposition}
\newtheorem{Lem}[thm]{Lemma}
\begin{document}
\maketitle
\begin{abstract}
We consider some probabilistic and analytic realizations of Virasoro highest-weight representations. Specifically, we consider measures on paths connecting points marked on the boundary of a (bordered) Riemann surface. These Schramm-Loewner Evolution (SLE)- type measures are constructed by the method of localization in path space. Their partition function (total mass) is the highest-weight vector of a Virasoro representation, and the action is given by Virasoro uniformization.

We review the formalism of Virasoro uniformization, which allows to define a canonical action of Virasoro generators on functions (or sections) on a - suitably extended - Teichm\"uller space. Then we describe the construction of families of measures on paths indexed by marked bordered Riemann surfaces. Finally we relate these two notions by showing that the partition functions of the latter generate a highest-weight representation - the quotient of a reducible Verma module - for the former.

\end{abstract}

\tableofcontents

\section{Introduction}

The Virasoro algebra is the infinite dimensional Lie algebra with generators $(L_n)_{n\in\Z}$, ${\bf c}$ and bracket given by
$$[L_m,L_n]=(m-n)L_{m+n}+\frac{m(m^2-1)}{12}\delta_{n,-m}{\bf c}$$
and $[L_n,{\bf c}]=0$ for all $m,n\in\Z$. A $(c,h)$-highest-weight representation is one generated by a vector $\phi$ such that $L_0\phi=h\phi$, ${\bf c}\phi=c\phi$ and $L_n\phi=0$ if $n>0$. A classical question is to determine the structure of these representations; for a given {\em central charge} $c$ and generic {\em weight} $h$, there is (up to isomorphism) a single such representation, which is irreducible. Exceptions occur for certain values of the weight, $h=h_{r,s}(c)$, $r,s\in\N$, given by the {\em Kac determinant formula} (e.g. \cite{Kac_Bombay,IohKog_Vir}).

It has long been understood - in particular from the work of Cardy \cite{Cardy_BCFT,Cardy_fusion} - that these representations play a crucial role in (Boundary) Conformal Field Theory (BCFT). Conformal Field Theory describes the scaling limit of conformally invariant critical systems in two dimensions, such as percolation or the Ising model. In particular, one can consider the scaling limit of correlators of certain observables (fields). In BCFT, these observables may be located on the boundary of a domain (of course, BCFT also accommodates ``bulk" observables located in the interior of the domain). Associated to such a boundary field is a critical exponent (scaling dimension). To each model corresponds a central charge $c$, and one generally expects that some of the fields of interest have scaling dimension given by the weights coming from the Kac determinant formula. 

\paragraph{Partition functions.} 
Arguably the best understood example is that of the Ising model. Versions of the following discussion appear e.g. in \cite{FriKal,Kont_arbeit,BBK,Dub_Euler}. Consider a sequence of graphs $\Gamma_\eps\subset\eps\Z^2$ which approximate a planar domain $D$. An Ising configuration $\sigma\in\{\pm 1\}^{\Gamma_\eps}$ is an assignment of a $\pm 1$ ``spin" to each interior vertex of $\Gamma_\eps$; we will prescribe the boundary spins below. The energy of a configuration is 
$$E(\sigma)=-J\sum_{(xy)\in E_{\Gamma_\eps}}\sigma(x)\sigma(y)$$
and the partition function is
\begin{equation}\label{eq:boltzmann}
Z=\sum_{\sigma}e^{-\beta E(\sigma)}
\end{equation}
for a well-chosen (critical) inverse temperature $\beta$. Of particular interest to us will be the boundary condition change (bcc) operators, which we now discuss.

Consider boundary points (or rather, boundary edges) $x_1,\dots,x_{2n}=x_0$ at macroscopic distance of each other and in cyclic order. We set the boundary spins to $+1$ (resp. $-1$) between $x_{2k}$ and $x_{2k+1}$ (resp. $x_{2k+1}$ and $x_{2k+2}$). Then the partition function for the Ising model with this boundary condition is denoted symbolically by
$$\langle \psi(x_1)\dots\psi(x_{2n})\rangle$$
(Equivalently, one can consider a fixed $+$ boundary condition and bring disorder variables to the boundary; a disorder variable is the endpoint of a disorder string, across which the Ising couplings are negated \cite{KC}). We consider the small mesh ($\eps\searrow 0$) limit of these correlators. There is an exponential (in the volume) divergence given by the free energy per site of the model; an exponential (in the perimeter) divergence
from the boundary; and a power law divergence coming from the scaling dimension of the bcc operators. The first two are lattice-dependent; the last one is {\em universal} (i.e. independent of the lattice on which the model is defined, provided the couplings are critical). 

One can expect that the boundary contribution can be explicitly compensated for as $\eps\searrow 0$ only in very specific situations, viz. polygonal domains (in the style of \cite{DupDav,Ken_Lap}). In more general situations, there is {\em a priori} no canonical way to regulate a single correlator as $\eps\searrow 0$. However, considering {\em families} of correlators yields non-trivial conditions on their regulated limit, as we now discuss (again in the Ising context, for the sake of concreteness).

Consider four planar domains $A_1,A_2,B_1,B_2$ s.t. $C=A_i\cap B_j$ does not depend on $i,j$; then set $D_{ij}=A_i\cup B_j$ for $i,j\in\{1,2\}$. The $D_{ij}$'s constitute a {\em neutral collection} of domains in the sense of \cite{KontSuh}. Correspondingly, $A^\eps_i$ (resp. $B^\eps_j$) is a mesh $\eps$ approximation of $A_i$ (resp. $B_j$), and $D^\eps_{ij}=A^\eps_i\cup B^\eps_j$. Let $\alpha_i$ (resp. $\beta_j$) be a product of local operators (e.g. spin variables and bcc operators) in $A_i\setminus \bar C$ (resp. $B_j\setminus\bar C$). Then one may consider the ratio
$$\frac{\langle\alpha_1\beta_1\rangle_{D_{11}^\eps}\langle\alpha_2\beta_2\rangle_{D_{22}^\eps}}{\langle\alpha_1\beta_2\rangle_{D_{12}^\eps}\langle\alpha_2\beta_1\rangle_{D_{21}^\eps}}$$
where each term is a sum over configurations of Boltzmann weights (as in \eqref{eq:boltzmann}) modified by the insertions $\alpha,\beta$. On general grounds, one expects such ratios to converge to a finite, universal limit (independent of the choice of discretization, provided that the weights are critical). 

A well-posed problem is to find a collection of ``continuous correlators" $(\langle \gamma\rangle_D)_{\gamma,D}$ indexed by (say, Jordan $C^1$) domains $D$ and products of local operators $\gamma$ such that 
\begin{equation}\label{eq:partfunconsist}
\frac{\langle\alpha_1\beta_1\rangle_{D_{11}^\eps}\langle\alpha_2\beta_2\rangle_{D_{22}^\eps}}{\langle\alpha_1\beta_2\rangle_{D_{12}^\eps}\langle\alpha_2\beta_1\rangle_{D_{21}^\eps}}
\xrightarrow{\eps\rightarrow 0}\frac{\langle\alpha_1\beta_1\rangle_{D_{11}}\langle\alpha_2\beta_2\rangle_{D_{22}}}{\langle\alpha_1\beta_2\rangle_{D_{12}}\langle\alpha_2\beta_1\rangle_{D_{21}}}
\end{equation}
whenever applicable. This is satisfied in the ``ideal" situation where (say, for polygonal domains $D^\eps$ with rational slopes)
$$\left\langle\prod_i\gamma_i\right\rangle_D=\lim_{\eps\searrow 0}\eps^{-\sum_i\Delta_i}\exp\left(-\sum_{x\in D^\eps}f-\sum_{x\in\partial D^\eps}g(\theta_x)-\sum_{x{\rm\ corner}}h(\nu_x)\log(\eps)\right)\left\langle\prod_i \gamma_i\right\rangle_{D^\eps}$$
is well-defined and positive, 
where $\Delta_i$ is the {\em scaling dimension} of the local operators $\gamma_i$'s, $f$ is the free energy per site, $g(\theta)$ the free energy per boundary site with slope $\theta$, $h(\nu_x)$ a contribution per corner $x$ with angle $\nu_x$.

Manifestly, such a collection of continuous partition functions $(\langle\gamma\rangle_D)$ is not uniquely specified by the condition \eqref{eq:partfunconsist}, since e.g.
$$\langle \gamma\rangle'_D\stackrel{def}{=}a_0a_1^{{\rm Vol}(D)}a_2^{{\rm length}(\partial D)}a_3^{{\rm\ number\ of\ insertions}}\langle \gamma\rangle_D$$
is then also a solution of \eqref{eq:partfunconsist} for arbitrary constants $a_0,\dots,a_3$. Yet, \eqref{eq:partfunconsist} is a highly non-trivial system of consistency conditions (even without any insertion). For ``very" solvable models (discrete Gaussian Free Field, Loop-Erased Random Walks/Uniform Spanning Trees and Ising model), large families of partition functions solving \eqref{eq:partfunconsist} can be constructed.

For a (boundary) CFT, if $\phi:D\rightarrow \phi(D)=D'$ is a conformal equivalence, and $\gamma_1,\dots,\gamma_n$ are (non-chiral) local fields, 
$$\frac{\langle\gamma_1(\phi(z_1))\dots\gamma_n(\phi(z_n))\rangle_{\phi(D)}}{\langle 1\rangle_{\phi(D)}}=\prod_i|\phi'(z_i)|^{-\Delta_i}\frac{\langle\gamma_1(z_1)\dots\gamma_n(z_n)\rangle_{D}}{\langle 1\rangle_{D}}.$$

\paragraph{Interfaces and Schramm-Loewner Evolutions.}

Following the introduction of Schramm-Loewner Evolutions (SLEs) by Schramm in \cite{S0}, a more geometric approach to these scaling limits has been investigated. 
Chordal SLE (\cite{S0,RS01,Law_AMS,W1}) is a one-parameter family of probability measures on paths connecting two marked boundary points in a simply-connected domain, indexed by a ``roughness" parameter $\kappa>0$.

In the Ising model, one can consider interfaces separating $+$ clusters from $-$ clusters, shifting the point of view from the correlators to the collection of interfaces. With the boundary conditions described above (viz. with bcc ``operators" at $x_1,\dots,x_{2n}$), one creates $n$ macroscopic interfaces pairing these $2n$ marked boundary points. In that case (at criticality), the interfaces are known to converge to (systems of) SLEs \cite{Sm1,Smi_ICM}. These can be seen as probability measures on ($n$-tuples) of paths, or - better - as positive measures with total mass (or {\em partition function}) $\langle\psi(x_1)\dots\psi(x_{2n})\rangle$.

In order to specify such $\SLE$ partition functions, one can introduce consistency conditions of the type discussed in \ref{eq:partfunconsist}. For example, let $\gamma$ be interface started from a boundary point $x_1$ in a domain $D_\eps$ (an $\eps$-grid approximation of a domain $D$), and aiming at $x_2$. Let $\gamma^\tau$ denote an initial slit of $\gamma$, for instance up to first exit of a small (but fixed) ball centered at $x_1$. Then the probability that $\gamma^\tau$ is a specific lattice path $\gamma_0$ from $x_1$ to $x'_1$ is, essentially by definition,
$$\P_{D_\eps}\{\gamma^\tau=\gamma_0\}=f(\gamma_0)\frac{\langle \psi(x'_1)\psi(x_2)\rangle_{D_\eps\setminus\gamma_0}}{\langle \psi(x_1)\psi(x_2)\rangle_{D_\eps}}$$
where $f(\gamma_0)$ is a product of local factors (depending only on the couplings of the edges crossed by $\gamma_0$).  Let $D'$ be another domain which agrees with $D$ in a neighborhood of $x_1$ (with a marked boundary point $x'_2$ away from $x_1$), and $D'_\eps$ a grid approximation of $D'$. Then
$$\frac{\P_{D'_\eps}\{\gamma^\tau=\gamma_0\}}{\P_{D_\eps}\{\gamma^\tau=\gamma_0\}}=\frac{\langle \psi(x'_1)\psi(x'_2)\rangle_{D'_\eps\setminus\gamma_0}\langle \psi(x_1)\psi(x_2)\rangle_{D_\eps}}
{\langle \psi(x'_1)\psi(x_2)\rangle_{D_\eps\setminus\gamma_0}\langle \psi(x_1)\psi(x'_2)\rangle_{D'_\eps}}$$
By convergence to $\SLE_3$, the lefthand side converges as $\eps\searrow 0$ to a function of $\gamma_0,D,D',x_1,x_2,x'_2$ (at least weakly), which is explicitly determined by the {\em restriction property} of $\SLE$ \cite{LSW3}. 

It is then rather natural (and, by now, quite standard \cite{Dub_Comm,BBK,Dub_dual,Kyt_Virmod,Law_multdef} etc.) to call partition function of $\SLE$ an assignment $(D,x,y)\mapsto{\mc Z}(D,x,y)$ so that (in the chordal case)
\begin{equation}\label{eq:partfunconsistSLE}
\E_{(D',x,y')}(\phi(\gamma^\tau))=\E_{(D,x,y)}\left(\phi(\gamma^\tau)\frac{{\mc Z}(D'\setminus\gamma^\tau,\gamma_\tau,y'){\mc Z}(D,x,y)}{{\mc Z}(D\setminus\gamma^\tau,\gamma_\tau,y'){\mc Z}(D',x,y')}\right)
\end{equation}
where $\gamma$ is the $\SLE$ trace, $\gamma^\tau$ an initial slit in a neighborhood of $x$ common to $D$ and $D'$, $\gamma_\tau$ its tip, and $\phi$ a generic test function). Similarly to \eqref{eq:partfunconsist}, the 4-tuple $(D,D',D\setminus\gamma^\tau,D'\setminus\gamma^\tau)$ constitutes a neutral collection. (Here the situation is somewhat complicated by the fact that the boundary is rough near the tip, which can be remediated by keeping track of a 1-jet of local coordinate at the marked points). Again, the {\em collection} of probability measures $(D,x,y)\mapsto\P_{(D,x,y)}$ gives a non-trivial condition \eqref{eq:partfunconsistSLE} on the {\em collection} of partition functions $(D,x,y)\mapsto{\mc Z}(D,x,y)$. This formalism is particularly useful in the presence of multiple $\SLE$ paths or in non simply-connected topologies. 

\paragraph{Virasoro uniformization.}

In the present article we will not be concerned with scaling limits (i.e. the analysis of discrete correlators or interfaces as the mesh of the underlying lattice goes to zero) but rather will be working directly in the continuum to relate SLE-type measures (on paths or systems of paths) with Virasoro representations. For this purpose we will review Virasoro uniformization (\cite{Kont_Vir,FriKal,Kont_arbeit}) and the method of ``localization in path space" for SLE (\cite{KontSuh,Law_multdef}).

As is well-known, the moduli space of compact surfaces of a given genus has itself a smooth - even complex - structure (keeping track of a Teichm\"uller marking avoids orbifold singularities resulting from surfaces with exceptional symmetries, such as the tori $\C/(\Z+i\Z)$ and $\C/(\Z+e^{\frac{i\pi}3}\Z)$). There are different ways to represent the tangent space to the Teichm\"uller space, corresponding to different ways to think of a (first-order, infinitesimal) deformation of the complex structure. For instance one can deform a compatible Riemannian metric; or deform the $\bar\partial$ operator (Beltrami equation). In the Kodaira-Spencer approach, one starts from a surface $\Sigma$, which by definition is covered by charts with analytic transition maps; the deformation consists in keeping the charts fixed and deforming the transition maps.

In particular one can consider deformations near a marked point $X$; somewhat informally, one can think of cutting out a small disk around that point and gluing it back with a different gluing data (transition map). This deformation is given by a vector field defined in a pointed neighborhood of $X$. In particular, if $z$ is a local coordinate at $X$, one can consider the vector field $-z^{n+1}\partial_z$, $n\in\Z$. This defines a tangent vector to the relevant Teichm\"uller space (of marked surfaces). However this tangent vector depends on the choice of coordinate $z$. Again informally, we can think of this construction as defining a vector field $\ell_n$ on the space of surfaces with a marked point and a marked local coordinate: $(\Sigma,X,z)$. 

It turns out that it is sufficient (and technically easier) to consider a formal local coordinate $\tilde z$ (viz. an element of the completed local ring at $X$ with a first-order zero) rather than a genuine local coordinate $z$. The augmented Teichm\"uller space (the space of marked surfaces of type $(\Sigma,X,\tilde z)$) is the projective limit of a tower of (smooth, finite dimensional) Teichm\"uller spaces. There is a natural notion of smooth functions on this space and the $\ell_n$'s are well-defined as derivations on these smooth functions and represent the Virasoro algebra with $c=0$. This action is geometric, canonical and local (it is defined in terms of a local chart around $X$  independently of other markings, the global geometry of the surface, etc.). A highest-weight vector is a function that has a tensor dependence on the (formal) coordinate $\tilde z$ (i.e. replacing $\tilde z$ with $\tilde z'$ results in multiplying the h.w.-vector by $(\frac{d\tilde z}{d\tilde z'}(X))^{h}$, where $h$ is the weight).

In order to obtain a Virasoro representation with general central charge $c$, one needs to consider sections of a {\em determinant line bundle} (rather than smooth functions) over the augmented Teichm\"uller space. One way to think of these sections is as functionals of a Riemannian metric satisfying a Polyakov anomaly formula (parameterized by $c$). Then one defines - again in a geometric, local fashion - first-order differential operators $L_n$'s that operate on smooth sections of that bundle. This gives a representation of the Virasoro algebra algebra with central charge $c$. This discussion can be carried out for deformations of bordered Riemann surface at a marked boundary point, which is the natural set-up for BCFT.

The next task is to identify sections which are ``interesting" highest-weight vectors; and have a natural probabilistic interpretation, viz. as the partition function (total mass) of a measure on paths (connecting two marked boundary points). In particular we need a collection of measures indexed by the underlying bordered surface.

\paragraph{Localization.}

For our purposes, a crucial result is the {\em restriction property} of $\SLE$ \cite{LSW3}, which quantifies how this measure behaves under a deformation of the (simply-connected) domain (away from the endpoints). This is also the first occurrence of the central charge in $\SLE$ theory. It also enables to define rather easily SLE-type measures and systems of SLEs in more complex geometries.

Specifically, for a bordered surface $\Sigma$ with two marked boundary points $X,Y$, one can consider the path space ${\mc P}(\Sigma,X,Y)$ of simple paths connecting $X$ to $Y$. If $D\subset\Sigma$ is a simply-connected domain which agrees with $\Sigma$ near $X,Y$, ${\mc P}(D,X,Y)$ is a (relatively) open subset of ${\mc P}(\Sigma,X,Y)$; as $D$ varies, one gets a cover of the path space. In order to define a measure $\mu_\Sigma$ on the path space ${\mc P}(\Sigma,X,Y)$, it is enough to define consistent restrictions $\ind_{\gamma\subset D}d\mu_\Sigma(\gamma)$ to the ${\mc P}(D,X,Y)$'s (localization). The advantage is that ${\mc P}(D,X,Y)$ has a natural reference measure: chordal $\SLE$ in $D$.

The problem is thus to define a collection of Radon-Nikodym derivatives $\phi_D^\Sigma$ (derivative of the restriction of $\mu_\Sigma$ to ${\mc P}(D,X,Y)$ w.r.t. chordal $\SLE$ in $D$). In order to verify that these local measures patch up correctly, it is enough to consider the case $D'\subset D$ and compare with the restriction property for simply-connected domains; this gives a (solvable) condition on the densities $(\phi_\Sigma^D)_D$.

This construction assigns a measure $\mu_{\Sigma,X,Y}$ on the path space ${\mc P}(\Sigma,X,Y)$ to a marked bordered surface $(\Sigma,X,Y)$, with a tensor dependence at the endpoints, with weight $h=h_{2,1}$ (one of the special weights appearing in the Kac determinant formula). The partition functions of this collection of measures defines a function on the augmented Teichm\"uller space: 
$${\mc Z}:(\Sigma,X,Y,z,w)\mapsto\|\mu_{(\Sigma,X,Y,z,w)}\|$$
where $z$ (resp. $w$) is a local coordinate at $X$ (resp. $Y$). Two difficulties consist of establishing the finiteness and the smoothness of this partition function. If $c\leq 0$, one can obtain finiteness (for one SLE strand) by comparing with the corresponding measure on the universal cover. Given finiteness, smoothness follows by hypoellipticity arguments. It is rather natural to then consider a section of the determinant bundle ${\mc Z}s$, where $s$ is a reference section expressed e.g. in terms of Laplacian $\zeta$-regularized determinants.

In simply-connected domains, a fundamental property of chordal SLE is the {\em Domain Markov} property. The measures $\mu_\Sigma$ inherit path decomposition identities, which in turn translate into the following {\em null vector equation} for the partition function:
$$\Delta_{2,1}({\mc Z}s)=(L_{-1}^2-\frac 4\kappa L_{-2})({\mc Z}s)=0$$
which is expected on CFT grounds. In terms of the earlier discussion, one can think of bcc operators as corresponding to inserting germs of chordal SLE; the partition function is the correlator of these bcc operators. In algebraic terms, the highest-weight module generated by ${\mc Z}s$ is a quotient of a reducible Verma module. 

The main goal of the article is to define - in what we hope is a concrete and precise manner - the terms of this equation, and then check it (Theorem \ref{Thm:part}); as well as lay the ground for further work, in particular on fusion \cite{Dub_Virfus} and bosonic representations of partition functions. Many of the important ingredients appear in some form in the literature, in particular in \cite{FriKal,Fri_CFTSLE,Kont_arbeit,KontSuh,Dub_SLEGFF,Law_multdef}; earlier realizations of Virasoro representations as ``infinite-dimensional" differential operators in a CFT context appear in \cite{Huang_VOA,BauBer_martVir,BB_CFTSLE,BB_part}. 
The point of view adopted here is an attempt of a middle ground between the more physical/algebraic and the more analytic/probabilistic of these references, with an emphasis on bridging the gap between, in particular, representation-theoretic and probabilistic concepts, an integrating SLE notions within the existing CFT framework. 

In Section 2, we review basic material on Riemann surfaces and discuss Virasoro uniformization (at $c=0$). In Section 3, we discuss loop measures, $\zeta$-regularized determinants,  and anomalies. This is combined in Section 4 to construct the Virasoro action on sections of the determinant bundle (with some technical aspects relegated to appendices). Section 5 describes SLE-type measures obtained by localization in path space and concludes with the null vector equation.

\paragraph{Discussion.}

The construction of $\SLE$-type measures by localization in path space presented here is largely a formal consequence of the restriction property for chordal $\SLE$ \cite{LSW3} and the loop measure \cite{LW} - this is also where the central charge first appears in SLE theory; almost simultaneously, the role of the central charge in SLE in relation with CFT was considered in \cite{BB_CFTSLE} and \cite{FW}. Many (most) of the arguments appear in some form in the literature, for which we now provide a short (and non-exhaustive) guide.

The question of the definition of SLE in more complex geometries has been addressed in many places, from quite a few points of view. In \cite{Bef_the}, the restriction property is used to show in particular that chordal $\SLE_{8/3}$ conditioned on avoiding a hole defines a conformally invariant, Domain Markov process. From the chordal (with marked points) and radial cases, it is fairly natural to use uniformization to try and define $\SLE$ in multiply-connected domains. This has been used in particular in \cite{Dub_ann,Zhan_doubly,Zhan_annprop,BauFri_multiply}. In this line, the main issue is to identify the ``physically relevant" SLEs and define them for all times. As pointed out by Makarov, in more complex topologies one has to preclude new potential pathologies, such as limit cycles.

Makarov and Zhan use the change of coordinate rules for $\SLE$ (e.g. \cite{Dub_Comm,SchWil}) - which follows fairly directly from \cite{LSW3} - to define $\SLE$s in general geometries using local charts. In a slight rephrasing (see e.g. Section 9 in \cite{Dub_Comm}), given $Z$ a suitable partition function, one can construct a Domain Markov (at least for short times) $\SLE$; the drift of the driving process of the said $\SLE$ (when written in a local coordinate $z$) is $-\kappa(\ell_{-1}Z)/Z$. In \cite{Dub_Comm}, in an effort to identify the physically relevant $\SLE$s, a necessary (under smoothness assumptions) condition for reversibility is stated and shown to lead to a differential equation for the ``partition function" $Z$; this differential equation (or rather pair of equations, one per endpoint) is nothing but the null-vector equation $\Delta_{2,1}({\mc Z}s)=0$.

In \cite{FriKal,Fri_CFTSLE,Kont_arbeit}, Friedrich, Kalkkinen and Kontsevich introduce the Virasoro uniformization to SLE and posit the existence of partition functions (that may be thought of as continuous limits of statistical mechanical ones) satisfying the null vector equations; the hypoelliptic nature of the null-vector equation $\Delta_{2,1}({\mc Z}s)$ is also pointed out there. Virasoro uniformization is also closely related to the sewing formalism introduced by Segal \cite{Segal} and elaborated on in particular by Huang in \cite{Huang_VOA} and subsequent work (see Section \ref{sssec:sewing}). Remark however that we chose to follow rather closely the approach to the determinant bundle of \cite{KontSuh,Fri_CFTSLE} rather than the Grassmannian construction of \cite{Segal}. It is also unclear (at least to us) how to accommodate several key features of this article, such as $\SLE$ measures and null-vector equations, in the framework of \cite{Segal,Huang_VOA}.

In \cite{BauBer_martVir,BB_CFTSLE,BB_part}, Bauer and Bernard consider (from a more physical point of view) connections of SLE and CFT, and Virasoro representations involving germs of analytic functions at infinity and highlight the role of partition functions. This was subsequently expanded on by Kyt\"ol\"a, see in particular  \cite{Kyt_Virmod}. In simply-connected domains, the resulting differential Virasoro representation and its action on $\SLE$ (local) martingales have natural interpretations in the framework discussed in this manuscript (see Sections \ref{ssec:BB}, \ref{sssec:locmart}), when written  with a suitable choice of (explicit) coordinates.

A crucial commonality between the earlier approaches of Huang \cite{Huang_VOA} and Bauer-Bernard \cite{BauBer_martVir,BB_CFTSLE,BB_part} and the present work, as sketched in Sections \ref{sssec:sewing} and \ref{ssec:BB}, is the representation of Virasoro generators as infinite-dimensional differential operators.

In \cite{FW}, building on the restriction property, Friedrich and Werner construct Virasoro representations operating on hierarchies of boundary correlators (involving an increasing number of marked points), in relation with the Ward identities.

In \cite{KontSuh}, Kontsevich and Suhov employ localization in path space to define $\SLE$ measures, with a different formalism but - as discussed in Section \ref{ss:comp} - in a manner essentially equivalent to the one presented here. For planar domains, the construction is explained in details in \cite{Law_multdef} (see also \cite{KozLaw_conf}). (In planar domains, points come equipped with a reference local coordinate, given by the embedding in the plane - this is the only nuance between the measures of \cite{Law_multdef} and those discussed here).

In \cite{DoyRivCar}, Doyon, Riva and Cardy consider a representation of the (bulk) stress-energy tensor in central charge 0 based on the SLE restriction property. Conformal Loop Ensemble formulations of the stress-energy tensor are also examined by Doyon in 
\cite{Doy_CLEstress}, in relation with a Virasoro action defined on functionals on spaces of conformal maps \cite{Doy_calc,Doy_hypo,Doy_higher}; see also the survey \cite{Doy_loopCFT}.

The finiteness of the partition function (for $c\leq 0$ and for general $c$ in annuli) is obtained in \cite{Law_multdef}. For annuli, the smoothness of the partition function is obtained by a Feynman-Kac representation in a parabolic set-up in \cite{Zhan_revwhole,Law_multdef}.

\paragraph{Acknowledgments.} It is my pleasure to thank anonymous referees for their insightful and helpful comments.

\section{Riemann surfaces}

In this section, we gather material on Riemann surfaces that will be used later, for the reader's convenience. We will be concerned mostly with bordered Riemann surfaces. The doubling procedure associates a (closed, compact) surface to a bordered surface. Constructions on the doubled surface are very useful in the study of the bordered surface. Hence we begin with elements of the classical theory of compact Riemann surfaces (see e.g. \cite{FarKra, GriHar, ABMNV}).

\subsection{Compact Riemann surfaces}

Let $\Sigma$ be a compact Riemann surface, that is, a smooth compact connected surface equipped with a complex structure. The complex structure is given by an analytic atlas (i.e. a covering by open sets identified with disks, in such a way that the transition maps are analytic). Alternatively, a complex structure is a Riemannian metric modulo the action of smooth functions by Weyl scaling. The complex structure induces an almost complex structure, i.e. a section $J$ of $\End(T\Sigma)$ with $J^2=-\Id$. In complex dimension one, every almost complex structure is integrable (i.e. corresponds to a complex structure), and the two data are equivalent. The almost complex structure gives an orientation.

The homology group $H_1(\Sigma,\Z)$ is a free abelian group of rank $2g$, where $g$ is the genus of $\Sigma$. A canonical basis of $H_1(\Sigma,\Z)$ (not uniquely defined) consists in (classes of) cycles $(A_1,\dots,A_g,B_1,\dots B_g)$ such that the only intersections are between $A_i$ and $B_i$, $i=1\dots g$, with direct orientation for these crossings. Given such a basis, one can identify $H_1(\Sigma,\Z)\simeq\Z^{2g}$. A Teichm\"uller surface is a Riemann surface marked with a canonical basis of $H_1(\Sigma,\Z)$. Equivalently, it is a Riemann surface equipped with a diffeomorphism to a reference smooth surface $\Sigma^s$, given up to isotopy.

A holomorphic bundle over $\Sigma$ is a bundle of complex vector spaces over $\Sigma$ with analytic transition functions. To such a bundle is associated the (invertible) sheaf of its holomorphic sections; we shall not distinguish between the two notions. The structure sheaf of analytic functions is denoted by ${\mc O}$.

The canonical sheaf $K$ is the sheaf of holomorphic $1$-forms (unless mention of the contrary, all forms will be 1-forms). In a local coordinate $z$ defined in an open set $U$, a holomorphic form is written as $\omega=f(z)dz$ where $f$ is holomorphic in $U$. The global sections of $K$ constitute the $g$-dimensional complex vector space $H^0(\Sigma,K)$ of differential forms of the first kind (DFK), or abelian differentials. Given a canonical homology basis, one can find a basis $(v_1,\dots,v_g)$ of $H^0(\Sigma,K)$ dual to the $A$-cycles, i.e. $\int_{A_i}v_j=\delta_{ij}$. The $g\times g$ period matrix $\Pi$ is then defined as $\Pi=(\int_{B_j}v_i)_{1\leq i,j\leq n}$. It is a symmetric matrix with positive definite imaginary part (as follows from Riemann's bilinear relations). The period matrix characterizes a Teichm\"uller surface (Torelli's theorem). If $g\geq 4$, not all $g\times g$ symmetric matrices with positive definite imaginary part are period matrices (the Schottky problem consists in identifying those which are actually period matrices). 

An abelian differential with vanishing $A$- (or $B$-) periods is zero. An abelian differential with imaginary $A$ and $B$ periods is zero.
  
We shall also consider meromorphic forms. The residue at a point of a form is defined invariantly (independently of a choice of local coordinate). The sum of residues of a form is zero. A meromorphic forms with zero residues is a differential form of the second kind (DSK); other meromorphic forms are deemed to be of the third kind. Given any two points $X,Y\in\Sigma$, there exists a meromorphic form with first order poles at $X,Y$ (residues $1,-1$) and holomorphic elsewhere. It is uniquely defined if one requires its $A$-periods to vanish (this requires to fix the $A$-cycles, due to residues). It is also uniquely defined if one requires all periods to be real. By taking a limit $Y\rightarrow X$, given any point $X\in\Sigma$, one can find a DSK with a second order pole at $X$ and regular elsewhere.

The holomorphic tangent bundle is denoted by $T\Sigma$ and can be identified with $K^{-1}$. Its sections can be written as $f(z)\frac{\partial}{\partial z}$ in a local coordinate $z$.

A divisor $D$ is formal finite linear combination with integer coefficients of points of $\Sigma$: $D=\sum n_iP_i$. The sheaf ${\mc O}(-D)$ can be defined as follows: its (holomorphic) sections are meromorphic functions with poles of order at most $-n_i$ at $P_i$ (if $n_i\leq 0$), vanishing at order at least $n_i$ at $P_i$ if $n_i>0$ and regular outside of the support of the divisor. 

\subsection{Theta functions and prime forms}

In this subsection, we collect results on theta functions that we shall use later on (including in planned subsequent work). These will be useful in particular to control the smoothness of various quantities under deformation of the complex structure; and give explicit examples of partition functions. For a complete account, see e.g. \cite{Mum_Tata1, Mum_Tata2, Fay_Theta}. Conventions are
as in \cite{FarKra}.

Let $\Pi$ be a fixed symmetric $g\times g$ complex matrix with positive definite imaginary part (as is the case for period matrices of genus $g$ Riemann surfaces). Such matrices constitute the Siegel half-space ${\mf S}_g$. The Riemann theta function is defined as:
$$\vartheta(z|\Pi)=\sum_{N\in\Z^g}\exp \left(2i\pi(\frac 12 \vphantom{N}^tN\Pi N+ \vphantom{N}^tNz)\right)$$
for $z\in\C^g$. The following transformation property is immediate:
$$\vartheta(z+\Pi N+M|\Pi)=\exp 2i\pi(-\frac 12\vphantom{N}^tN\Pi N-\vphantom{N}^tNz)\vartheta(z|\Pi)$$
for all $M,n\in\Z^g$, and consequently $\vartheta(.|\Pi)$ can be seen as a multivalued function on the complex torus $\C^g/(\Z^g+\Pi\Z^g)$ (it is in particular $\Z^g$-periodic). It is also even.

One can extend the definition to theta functions with characteristic. Let $\eps,\eps'$ be in $\R^g$; one identifies $\C^g$ with $(\R^g)^2$ via $(\eps,\eps')\mapsto \eps'+\Pi\eps$. Then define:
$$\thetaf{2\eps}{2\eps'}(z)=\sum_{N\in\Z^{2g}}\exp
2i\pi\left(\frac 12 \vphantom{N}^t(N+\eps)\Pi (N+\eps)+\vphantom{N}^t(N+\eps)(z+\eps')
\right)$$
keeping now the dependence on $\Pi$ implicit. 
When $2\eps,2\eps'$ have integer coordinates, this function is the first order theta function with integer characteristic $[2\eps\ 2\eps']$. Up to sign, there are $2^{2g}$ such functions, corresponding to the 2-torsion of $\C^g/2(\Z^g+\Pi\Z^g)$. Of these, $2^{g-1}(2^g+1)$ are even (in $z$) and the remaining $2^{g-1}(2^g-1)$ are odd; this depends on the parity of $\vphantom{\eps}^t\eps\eps'$.
If $[\eps\ \eps']$ is an integer characteristic, one has the following transformation properties:
\begin{align*}
\thetaf{\eps}{\eps'}(z+e_k)&=\exp (i\pi\eps_k)\thetaf{\eps}{\eps'}(z)\\
\thetaf{\eps}{\eps'}(z+\Pi e_k)&=\exp i\pi(-2z_k-\Pi_{kk}-\eps'_k)\thetaf{\eps}{\eps'}(z)\\
\thetaf{\eps+2\nu}{\eps'+2\nu'}(z)&=\exp (i\pi \vphantom{\eps}^t\eps\nu')\thetaf{\eps}{\eps'}(z)\\
\end{align*}
where $(e_k)$ is the standard basis of $\Z^g$ and $\nu,\nu'$ are integer valued vectors.

An immediate property is the heat equation:
\begin{align*}
4i\pi\partial_{\Pi_{ii}}\vartheta&=\partial_{z_iz_i}\vartheta\\
2i\pi(\partial_{\Pi_{ij}}+\partial_{\Pi_{ji}})\vartheta&=\partial_{z_iz_j}\vartheta
\end{align*}
relating variations w.r.t. the $z_.$ and $\Pi_.$ variables of a theta function with characteristics $\vartheta$.

Let $\Sigma$ be a Teichm\"uller surface of genus $g$, $(A_1,\dots,A_g,B_1,\dots,B_g)$ the homology basis, $v=(v_1,\dots,v_g)$ the basis of abelian forms dual to the $A$-cycles, $\Pi$ the period matrix. Then there exists an odd integer 
characteristic $[\delta\ \delta']$ which is non singular in the sense that the gradient of the associated theta function at 0 does not vanish (see chapter II in \cite{Fay_Theta}, \cite{Mum_Tata2} from p207). Let us fix such a non singular theta characteristic and denote by $\vartheta$ the associated theta function $\thetaf{\delta}{\delta'}$. Consider the abelian form:
$$\zeta=\sum_{i=1}^g\partial_{z_i}\vartheta(0)v_i.$$
It turns out that the zeroes of this form have even order (in a local coordinate $t$, $\zeta=a(t)dt$ with $a$ holomorphic and with even order zeroes). Thus one can consider $\sqrt\zeta$ as a global section of a holomorphic bundle $L$ such that $L^{\otimes 2}\simeq K$.

At this point one can define the {\em prime form} $E$ on $\Sigma\times\Sigma$:
$$E(x,y)=\frac{\vartheta\left(\int_x^y v\right)}{\sqrt{\zeta(x)}\sqrt{\zeta(y)}}.$$
This depends on the path of integration: changing the path of integration introduces additional periods, hence involves transformation properties of $\vartheta$. One can make it single valued by lifting to the universal cover (i.e. $x,y\in\tilde\Sigma$ the universal cover of $\Sigma$). We reproduce the following properties from \cite{Mum_Tata2}:
\begin{enumerate}
\item $E(x,y)=0$ iff $x$ and $y$ project to the same point in $\Sigma$
\item $E$ vanishes to first order along the diagonal of $\tilde\Sigma\times \tilde\Sigma$
\item $E(x,y)=-E(y,x)$
\item Let $t$ be a local coordinate about $x\in\Sigma$ (i.e. $t(x)=0$) such that $\zeta=dt$; then
$$E(x,y)=\frac{t(x)-t(y)}{\sqrt{dt(x)}\sqrt{dt(y)}}(1+O((t(x)-t(y))^2)).$$
\item $E(x,y)$ is unchanged if $x$ or $y$ is moved along an $A$-period. If $x$ is moved by a $B$-period $\Sigma n_iB_i$ to $x'$,
$$E(x',y)=\pm E(x,y)\exp(-i\pi \vphantom{n}^tn\Pi n+2i\pi\vphantom{n}^tn\int_x^yv).$$
If $y$ is moved to $y'$ along the same $B$ period:
$$E(x,y)=\pm E(x,y)\exp(-i\pi \vphantom{n}^tn\Pi n-2i\pi\vphantom{n}^tn\int_x^yv).$$
\end{enumerate}
(The $\pm$ sign is kept undetermined in order to circumvent a discussion of half-order differentials, and is unimportant for our purposes.)

The prime form does not depend on the choice of non singular odd characteristic (as is easily seen from its vanishing properties), and has a simple dependence on the choice of homology basis. 

Various meromorphic sections can be reconstructed from the prime form $E$. In particular, as noted earlier, for $a,b\in\Sigma$, there is a unique meromorphic 1-form $\omega_{a-b}$ on $\Sigma$ which is regular except at $a,b$, where it has simple poles with residues 1,-1 respectively, and has vanishing $A$-periods. This form can be written as:
$$\omega_{a-b}(x)=d_x\log\frac{E(x,a)}{E(x,b)}.$$
Note that although $E$ is not single-valued, $\omega_{a-b}$ is well defined.

Similarly, the ``fundamental 2-form" on $\Sigma\times\Sigma$, expressed as:
$$\omega(x,y)=d_xd_y\log E(x,y)$$
is well defined and symmetric in $x,y$. In a local coordinate $t$, it has the expansion:
$$\omega(x,y)=\left(\frac 1{(t(x)-t(y))^2}+(reg)\right)dt(x)dt(y)$$
near the diagonal, where $reg$ is biholomorphic. Moreover, integrating the variable $x$ along an $A$-period, the resulting 1-form (in $y$) is zero: $\int_{A_i}\omega(.,y)=0$. Along a $B$-period:
$$\int_{B_j}\omega(.,y)=2i\pi v_j(y).$$
Also, $\int_a^b\omega(.,y)=\omega_{b-a}(y)$, the integral being taken on a path that does not intersect cycles of the homology basis. Hence $\int_{B_j}\omega_{b-a}=2i\pi\int_a^b v_j$. The differential form:
$$\Omega_{b-a}=\omega_{b-a}-2i\pi \vphantom{v}^tv(\Im\Pi)^{-1} \Im\int_a^b v$$
has residues $-1,1$ at $a,b$ and pure imaginary ($A$ and $B$) periods (these being defined modulo $2i\pi\Z$). It is uniquely defined by these properties.

For $a\in\Sigma$, there is a unique (up to multiplicative constant) meromorphic form $\eta_a$ with double pole at $a$, regular elsewhere and with vanishing $A$-periods. It can be expressed as: $\omega(x,a)/dt(a)$, where $t$ is a local coordinate at $a$.

This can be used to express variations of different quantities under a variation of the surface $\Sigma$ in terms of $\vartheta$ and its derivatives, in particular in conjunction with the heat equation. 

The Bergman connection $B_p$ evaluated at $p\in\Sigma$ w.r.t. local coordinate $z$ is defined from the following expansions for $x,y$ near $p$ in $\Sigma$:
\begin{align*}
E(x,y)\sqrt{dz(x)dz(y)}&=(y-x)(1-\frac{B_p}{12}(z(y)-z(x))^2+O((z(y)-z(x))^3))\\
\omega(x,y)&=\left(\frac 1{(z(y)-z(x))^2}+\frac{B_p}6+O(z(y)-z(x))\right)dz(x)dz(y)\\
\eta_p(x)&=\left(\frac 1{(z(x)-z(p))^2}+\frac{B_p}6+O(z(x)-z(p))\right)dz(x)
\end{align*}
From this it is immediate that $B_p$ depends on the local coordinate $z$ as a Schwarzian connection:
\begin{align*}
\left(\frac 1{z^2}+\frac{B(z)}6+\cdots\right)dz
&=\frac{dz'}{dz}\left(\frac 1{z'^2}+\frac{B(z')}6+\cdots\right)dz'\\
&=\left(\frac 1{z^2}\left(1+\frac{z}2\frac{\partial_z^2z'}{\partial_zz'}+\frac{z^2}6\frac{\partial_z^3z'}{\partial_zz'}
\right)^{-2}+\frac{B(z')}6(\partial_zz')^2+\cdots\right)\left(1+z\frac{\partial_z^2z'}{\partial_zz'}+\frac{z^2}2\frac{\partial_z^3z'}{\partial_zz'}\right)dz
\end{align*}
where $z,z'$ are local coordinates at $p$ ($z(p)=z'(p)=0$), which implies:
$$B(z)=B(z')\left(\frac{dz'}{dz}\right)^2+\{z';z\}$$
where 
$$\{z';z\}=\partial_z^3z'/\partial_zz'-3/2(\partial_z^2z'/\partial_zz')^2$$
is the Schwarzian derivative. This is because the difference of the sides in the equation is an abelian form with vanishing $A$-periods, hence 0. The difference of two Schwarzian connections is a quadratic differential.

Let us illustrate these various concepts in the genus 1 case: let $\Sigma=\C/(\Z+\tau\Z)$ be an elliptic curve ($\Im\tau>0$). There is a unique odd integer characteristic theta function $\thetaf{1}{1}$, which we denote by $\theta$ (notations as in \cite{Cha}). Then $\zeta=\theta'(0)dz$, $E(x,y)=\theta(y-x)/\theta'(0)\sqrt{dxdy}$. The fundamental 2-form can be expressed in terms of the elliptic function $\wp$ (Weierstrass $\wp$ function):
$$\omega(x,y)=(\wp(y-x)+2\eta_1)dz(x)dz(y)$$
with $2\eta_1=-\int_A\wp(z)dz$, $2\eta_2=-\int_B\wp(z)dz$, where $A,B$ is the usual homology basis. Note that $\int_A\omega(x,.)=0$ and $\int_B\omega(x,.)=(2\eta_1\tau-2\eta_2)dz(x)=2i\pi dz(x)$, in agreement with the Legendre relation. The Bergman connection (in the flat coordinate $z$) is $B_x=-2\frac{\theta'''}{\theta'}(0)$ at any $x\in\Sigma$. The dependence on the modulus $\tau$ can be made more explicit, e.g. in terms of Dedekind's $\eta$ function: 
$$\theta'(0|\tau)=2\pi\eta^3(\tau)$$
and consequently
$$\frac{\theta'''}{\theta'}(0|\tau)=4i\pi\partial_\tau\log \theta'(0|\tau)=12i\pi\partial_\tau\log\eta(\tau)$$
using the heat equation.

\subsection{Bordered surfaces}

It will be convenient later on to use bordered Riemann surfaces. These are classically studied by considering their (Schottky) double, which are compact Riemann surfaces (see e.g. chapter VI in \cite{Fay_Theta}).

A bordered Riemann surface is modelled locally either on the unit disk (for interior points) or on the semidisk $D^+=\{z:\Im z\geq 0, |z|<1\}$ for boundary points; transition maps are analytic. 

We shall consider only surfaces with a finite number of boundary components (diffeomorphic to circles); these boundary curves are positively oriented, that is, the surface lies to their lefthand side.

The Schwarz reflection principle implies that a continuous analytic function $f$ on, say, the semidisk $D^+$ which is real on the boundary extends to an analytic function on the semidisk via $f(\bar z)=\overline{f(z)}$.

Let $\Sigma$ be a bordered Riemann surface of genus $\rho$ with $n$ boundary components. The double $\hat\Sigma$ is the compact Riemann surface obtained by gluing $\Sigma$ and a conjugate copy of $\Sigma$ along their boundaries. It carries an antiholomorphic involution $\iota$ whose fixed points are the boundary points of $\Sigma$. For instance, if $z$ is a local coordinate at $x\in\Sigma\subset\hat\Sigma$, $\overline{z\circ\iota}$ is a local coordinate at $\iota x$. The involution operates similarly on all natural holomorphic structures (differential forms, \dots). If $\omega$ is a tensor, we denote simply $\iota\omega$ for $\iota^*\omega$ (without ambiguity as $\iota$ is an involution). The double has genus $g=2\rho+n-1$.

One can choose a homology basis on $\hat\Sigma$ adapted to the double structure as follows. Let $A_1,B_1,\dots,A_\rho,B_\rho$ be pairs of cycles around the ``handles'' of $\Sigma$; the oriented boundary components of $\Sigma$ are 
$\Gamma_0,\dots,\Gamma_\rho$. Let $A'_i=\iota A_i$, $B'_i=-\iota B_i$ ($\iota$ is antiholomorphic, so the minus sign is needed to preserve the direct orientation of the crossing). One can take a canonical basis of $H_1(\hat\Sigma,\Z)$ in which the $g=(2\rho+n-1)$  $A$-cycles are 
$$A_1,\dots,A_\rho,A_{\rho+1},\dots,A_{\rho+n-1},A'_1,\dots,A'_\rho$$
where $A_{\rho+i}=\Gamma_i$, $i=1,\dots,n-1$. The cycle $B_{\rho+i}$ starts at a point on $\Gamma_i$, travels on $\Sigma$ to a point on $\Gamma_0$, and comes back symmetrically on $\iota\Sigma$, without intersecting other cycles. Note that the set of $A$-cycles is preserved by $\iota$. It follows that if $(v_1,\dots,v_{\rho+1},\dots,v'_1,\dots)$ is the dual basis of $H^0(\hat\Sigma,K)$, one gets: $\overline{\iota v_i}=v'_i$, $\overline{\iota v_{\rho+i}}=v_{\rho+i}$. Similarly, the period matrix $\Pi$ has symmetries, in particular $(\Pi_{\rho+i,\rho+j})_{1\leq i,j\leq n-1}$ is pure imaginary. In what follows, the same basis will be denoted $(v_1,\dots,v_g)$.

In a similar way, one gets: $\omega_{\iota a-\iota b}=\overline{\iota\omega_{a-b}}$, $\eta_{\iota a}=\overline{\iota\eta_a}$, $\omega=\overline{\iota\omega}$ (fundamental 2-form), and $E(\iota x,\iota y)=\overline{E(x,y)}$ (see \cite{Fay_Theta}, Cor. 6.12).
 
\paragraph{Potential Theory.}  
Harmonic invariants on the bordered surface $\Sigma$ can be expressed in terms of holomorphic invariants on $\hat\Sigma$ (and ultimately in terms of the prime form or theta functions).

The two basic potential theory problems on $\Sigma$ are the (a) the Poisson  problem: For a given $(1,1)$-form written locally as $hdz\wedge d\bar z$, find $f$ vanishing on the boundary s.t.
$$\partial\bar\partial f=f_{z\bar z}dz\wedge d\bar z=hdz\wedge d\bar z$$
and (b) the Dirichlet boundary value problem: given a continuous function $f_0$ on the boundary $\partial\Sigma$, find a continuous extension $f$ to $\Sigma$ which is harmonic there.

The Green's function $G(x,y)$ on $\Sigma$ (with Dirichlet boundary conditions on $\partial\Sigma$) depends only on the conformal structure. It can be expressed as:
\begin{align*}
2\pi G(x,y)&=\frac 12\int_{\iota x}^x\Omega_{\iota y-y}\\
&=-\frac 12\int_x^{\iota x}\int_y^{\iota y}\omega+\pi\sum_{j,k=1}^g(\Im\Pi)^{-1}_{jk}\Im(\int_x^{\iota x}v_j)\Im(\int_y^{\iota y}v_k)
\end{align*}
where $\Omega_{b-a}$ is the meromorphic form with residues $-1,1$ at $a,b$ and purely imaginary periods, and $\omega(x,y)=d_xd_y\log(E(x,y))$ is the fundamental 2-form. 

For a fixed $y\in\Sigma$, $G(.,y)$ vanishes on the boundary, is harmonic on $\Sigma\setminus\{y\}$, and has an expansion near $y$:
$$G(x,y)=\frac 1{2\pi}\log|x-y|+(reg)$$
where $(reg)$ is continuous (the leading part does not depend on the choice of coordinates). These properties characterize uniquely $G$. Setting
$$f(x)=i\int_\Sigma G(x,y)h(y)dy\wedge d\bar y$$
solves the Poisson problem.

Let $p\in\partial\Sigma$. Considering $\tilde\eta_p$ the differential form of the second kind with the same divisor and meromorphic part at $p$ as $\eta_p$, but with the condition that all its periods are real. It is defined w.r.t a local coordinate $z$ which is chosen real and increasing at $p$ along the (oriented) boundary. Let $h_p(q)=\pi^{-1}\Im(\int_{p_0}^q\tilde\eta_p)$ for $q\in\Sigma$, where $p_0$ is a base point on $\partial\Sigma$. Then $h_p$ is a harmonic function vanishing on $\partial\Sigma\setminus\{p\}$; it is the unique such function with expansion $h_p(q)=\pi^{-1}\Im(1/(z(q)-z(p)))+O(1)$ near $p$. It is thus identified as the Poisson kernel $P_\Sigma(.,p)=h_p(.)$ (w.r.t. the length element $dz$ at $p$) in the following sense: if $f_0$ is continuous on $\partial\Sigma$, setting 
$$f(q)=\int_{\partial\Sigma}f_0(p)P_\Sigma(q,p)dz(p)$$
solves the Dirichlet boundary value problem. 
 
Another classical operator is the Dirichlet-to-Neumann operator, which maps $f_0\in C^\infty(\partial\Sigma)$ to the normal derivative on $\partial\Sigma$ of its harmonic extension to $\Sigma$. This defines a singular integral operator on $\partial\Sigma$ with kernel (also called the Poisson excursion kernel) given by 
\begin{equation}\label{eq:Poissonexcdef}
(q,p)\mapsto H(p,q)=\pi^{-1}\Re(\tilde\eta_p(q))
\end{equation}
which is naturally defined as a $1$-form in $p$ and $q$ and is easily seen to be symmetric (the Poisson kernel can be realized by taking the normal derivative w.r.t. one argument of the Green kernel on the boundary; and the Poisson excursion kernel by taking the normal derivative w.r.t. both variables on the boundary).
 
Define $S_p$ w.r.t. the coordinate $z$ by:
\begin{equation}\label{Sconn}
\tilde\eta_p(x)=\left(\frac 1{(z(x)-z(p))^2}+\frac{S_p}6+O(z(x)-z(p))\right)dz(x)
\end{equation}
Since $\int_{A_j}\eta_p=0$, $\int_{B_j}\eta_p=2i\pi (v_j/dz)(p)$, one gets:
$$\tilde\eta_p=\eta_p-2\pi\sum_{jk} v_j(\Im\Pi)^{-1}_{jk}\Re(v_k/dz)(p)$$
from which follows (evaluated w.r.t. $z$):
$$S_p=B_p-12\pi\vphantom{w}^tw(\Im\Pi)^{-1}w$$
where $w=(\Re(v_k/dz)(p))_k$, using the fact that $S_p(z)$ is real ($z$ is real along the boundary). If $z'$ is another such coordinate, one has the Schwarzian connection identity:
\begin{equation}\label{eq:Sconndef}
S(z)=S(z')\left(\frac{dz'}{dz}\right)^2+\{z';z\}
\end{equation}
This also has a simple expression in terms of the {\em bubble measure} of \cite{LW} and the Dirichlet-to-Neumann operator.

\subsection{Teichm\"uller space, deformations, and Virasoro uniformization}

\subsubsection{Teichm\"uller space}

Let $\Sigma^s$ be a smooth compact oriented surface of genus $g$. Any Riemann surface of genus $g$ is diffeomorphic to $\Sigma$. A Teichm\"uller surface is a Riemann surface equipped with a standard homology basis. Two Teichm\"uller surfaces are equivalent if there is a conformal isomorphism between them compatible with the marking. The Teichm\"uller space ${\mc T}_g$ is the space of equivalence classes of equivalence of genus $g$ Teichm\"uller curves.

There is a natural complex structure on ${\mc T}_g$ given by the following prescription: if $\pi:\Xi\rightarrow B$ is an analytic submersion ($B$ a small polydisk) where the fibers $\pi^{-1}(b)$, $b\in B$, are genus $g$ surfaces (with continuous Teichm\"ulcer marking), then the map $b\mapsto [\pi^{-1}(b)]\in{\mc T}_g$ is analytic.
This can be done by considering instead classes of equivalence of quasi-conformal maps and using the Ahlfors-Bers result on solving the Beltrami equation analytically in the Beltrami differential (see e.g. \cite{Gardiner} and references therein). More precisely, there exists a universal Teichm\"uller curve ${\mc C}_g$, that is a holomorphic family of Teichm\"uller curves parameterized by the Teichm\"uller space: ${\mc C}_g\rightarrow{\mc T}_g$.

A complex structure on $\Sigma^s$ is given by an analytic atlas. With a partition  of unity, one can construct a Riemannian metric $g$ on $\Sigma^s$ compatible with this complex structure; that is, if $z=x+iy$ is a local analytic coordinate,
$$g=e^{2\sigma}(dx\otimes dx+ dy\otimes dy)$$
locally. Conversely, given $g$, one can find locally ``isothermal" coordinates $x,y$ such that this holds, and hence recover a complex structure. The metrics $g$ and $e^{2\sigma} g$ yield the same complex structure (``Weyl scaling"). Also,
if $\phi$ is an orientation preserving diffeomorphism of $\Sigma^s$, $(\Sigma^s,\phi^*g)$ and $(\Sigma^s,g)$ are equivalent as Riemannian surfaces, {\em a fortiori} as Riemann surfaces. Moreover, if $\phi\in \Diff_0(\Sigma^s)$ the connected component of the identity in the group of diffeomorphisms of $\Sigma
^s$ (viz. $\phi$ is isotopic to the identity), $(\Sigma^s,\phi^*g)$ and $(\Sigma^s,g)$ are equivalent as Teichm\"uller surfaces. So if ${\rm Met}$ is the (convex) set of smooth Riemannian metrics on $\Sigma^s$, and $C^\infty(\Sigma^s)$ operates on ${\rm Met}$ by $\sigma.g=e^{2\sigma} g$, then the Teichm\"uller space is the space of orbits:
$${\rm Met}/(C^\infty(\Sigma^s)\rtimes \Diff_0(\Sigma^s))$$

\subsubsection{Kodaira-Spencer deformation}

\paragraph{Compact surfaces.} 
We proceed with the Kodaira-Spencer description of the tangent space to the Teichm\"uller space at a surface $\Sigma$ (see \cite{Kodaira}). Let $(\Sigma_t)_{-\eps<t<\eps}$ be a smooth family of compact Riemann surfaces (i.e. without boundary), the lift of a smooth path on the Teichm\"uller space with $\Sigma_0=\Sigma$. For instance, one can fix an underlying differentiable manifold $\Sigma^s$ and consider a smooth family of complex structures on $\Sigma^s$ (e.g. given by a smooth family of Riemannian metrics, or by a smooth family of almost complex structures). 

Let ${\mf U}=\{U_i\}$ be a locally finite covering of $\Sigma$ by analytic disks. Then $\Sigma_t$ is covered by $\{U_i^t\}$ ($U_i^t$ is the open set $U_i$ equipped with the complex structure inherited from that of $\Sigma_t$). Let $\theta_i(t):U_i\rightarrow U_i^t$ be a conformal equivalence (w.r.t. the complex structures on $\Sigma$, $\Sigma_t$ respectively), smooth in $t$, $\theta_i(0)=\Id_{U_i}$. Then on $U_{ij}=U_i\cap U_j$, $\theta_i(t)^{-1}\circ\theta_j(t)$ is analytic (for any $z\in U_{ij}$, it is defined for $t$ small enough and a small enough neighborhood of $z$); it is the identity at $t=0$. Let $\alpha_{ij}$ be the time derivative at $t=0$ of $(\theta_i(t)^{-1}\circ\theta_j(t))$. It is naturally seen as a holomorphic vector field on $U_{ij}$. On $U_{ijk}=U_i\cap U_j\cap U_k$, one has the identity:
$$\alpha_{ij}+\alpha_{jk}+\alpha_{ki}=0$$
Hence the collection $(\alpha_{ij})$ defines a \v Cech 1-cocycle with values in the holomorphic tangent sheaf $T\Sigma\simeq K^{-1}$. Its class in $H^1({\mf U},K^{-1})$ does not depend on choices. Indeed, if $\theta_i$ is replaced with $\theta_i\circ\beta_i$, where $(\beta_i^t)$ is a family of conformal maps $U_i\rightarrow U_i$, it is simple to check that $\alpha_{ij}$ is replaced with $\alpha_{ij}+\beta_i-\beta_j$, which consists in adding the coboundary $d\beta$. Hence the infinitesimal deformation defines an element in $H^1({\mf U},K^{-1})$. Since the covering ${\mf U}$ was chosen acyclic ($K^{-1}$ is trivialized on the analytic disks $U_i$), $H^1({\mf U},K^{-1})\simeq H^1(\Sigma,K^{-1})$ (sheaf cohomology). One can also check that this does not depend on the choice of covering.

Conversely, any element in $H^1(\Sigma,K^{-1})$ corresponds to an infinitesimal deformation of $\Sigma$ (i.e. there is a smooth one parameter family of Riemann surfaces which induces a prescribed element of $H^1(\Sigma,K^{-1})$ via the construction described above); this follows from the existence theorem of Kodaira-Spencer (\cite{Kodaira}, building on the Newlander-Nirenberg theorem), given that $H^2(\Sigma,K^{-1})=0$ for dimensional reasons.

This gives the identification:
\begin{equation}\label{eq:KSisom}
(T{\mc T_g})_\Sigma\simeq H^1(\Sigma,K^{-1})
\end{equation}
and by Serre duality: $(T^*{\mc T}_g)_\Sigma\simeq H^0(\Sigma,K^2)$. The complex structure of $T{\mc T}_g$ (recall that the Teichm\"uller space is complex analytic) corresponds to the natural complex structure of $H^1(\Sigma,K^{-1})$. An application of the Riemann-Roch theorem shows that $h^0(\Sigma,K^{-1})=\dim H^0(\Sigma,K^{-1})=3g-3$ if $g\geq 2$ (0 if $g=0$, 1 if $g=1$), i.e. ${\mc T}_g$ is $3g-3$ dimensional (as a complex manifold).

\paragraph{Punctures.} 
We shall also consider Teichm\"uller surfaces with marked points (often referred to as {\em punctures}), or more generally marked $k$-jets. A configuration consists now of a Teichm\"uller surface $\Sigma$ with marked points $X_i$, and the $k_i$-jet of a parameter at $X_i$, $i=1,\dots,r$. A $k$-jet at $X$ is an element of $\C[z]/(z^{k+1}\C[z])$ with a first-order zero, where $z$ is a given analytic local coordinate at $X$: $z(X)=0$; more intrinsically, it is an element of ${\mc O}_X/{\mf m}_X^{k+1}$ where ${\mc O}_X$ is the local ring at $X$ and ${\mf m}_X$ its maximal ideal. 

Two configurations are equivalent if there is a conformal equivalence sending marked points (and jets) to marked points (and jets). The Teichm\"uller space ${\mc T}={\mc T}_{g,1^{k_1},\dots,1^{k_r}}$ (notation as in \cite{Kont_Vir}) is the set of equivalence classes of such configurations. As before, it is a complex analytic space with tangent space identified as:
$$(T{\mc T}_{g,1^{k_1},\dots,1^{k_r}})_{\Sigma,\dots}\simeq H^1(\Sigma,K^{-1}\otimes{\mc O}(-\sum_{i=1}^r(k_i+1)X_i)).$$
The sheaf in the RHS is that of holomorphic vector fields with a zero of order $\geq k_i+1$ at $X_i$, $i=1,\dots,r$. Indeed, a vector field preserving a $k$-jet at $X$ vanishes at order $k+1$ there. %
Moreover, there is a natural projection - consisting in forgetting the jets - ${\mc T}_{g,1^{k_1},\dots,1^{k_r}}\rightarrow{\mc T}_{g,1,\dots,1}$, making the first a $G$-principal bundle over the second, where $G=\prod\Aut({\mc O}_{X_i}/{\mf m}_{X_i}^{k_i+1})$. %

Results on the marked Teichm\"uller space may be recovered easily from the classical (unmarked) set-up. For instance, ${\mc T}_{g,1,1}$ is naturally identified with an open subset of the fibered product ${\mc C}_g\times_{{\mc T}_g}{\mc C}_g$, where ${\mc C}_g\rightarrow{\mc T}_g$ is the universal Teichm\"uller curve. 

\paragraph{Bordered surfaces.} 
We now turn to bordered Teichm\"uller surfaces. Let $\Sigma^s$ be a smooth bordered oriented surface with genus $\rho$ and $n$ boundary components, used for reference. A Teichm\"uller surface (of this topological type) is a Riemann surface equipped with a diffeomorphism to $\Sigma_s$, defined up to isotopy. As usual, to a bordered surface $\Sigma$, one associates its compact double $\hat\Sigma$, here of genus $g=2\rho+n-1$. The Teichm\"uller space is the set of Teichm\"uller surfaces (with a marked homology basis) of this type up to equivalence. It can be constructed directly (see e.g. \cite{Jost} for a detailed discussion) or seen as a real analytic subspace of the complex analytic space ${\mc T}_g$. The Kodaira-Spencer construction goes through if one defines the tangent sheaf $T\Sigma$ as the sheaf of analytic vector fields that flow along the boundary (this is a sheaf in real vector spaces). Then
$$(T{\mc T}_{\rho,n})_\Sigma\simeq H^1(\Sigma,T\Sigma)$$
as real vector spaces, and one can see easily by a reflection argument that $H^1(\Sigma,T\Sigma)\otimes_\R\C\simeq H^1(\hat\Sigma,T\hat\Sigma)$. Indeed, if $\omega$ is a (local) section of $T\hat\Sigma$, then $\omega=\frac{\omega+\overline{\iota\omega}}2+\frac{\omega-\overline{\iota\omega}}2$, and $\frac{\omega+\overline{\iota\omega}}2$ and $i\frac{\omega-\overline{\iota\omega}}2$ are sections of $T\Sigma$. Hence ${\mc T}_{\rho,n}$ has $3g-3$ real dimensions.
Similarly, one can mark points and jets on the boundary or in the bulk (the interior of $\Sigma$); jets on the boundary are assumed to be real along the boundary and compatible with the orientation of the boundary. If a $k_i$-jet is marked at $X_i$, one gets:
$$(T{\mc T}_{\rho,n,1^{k_1},\dots,1^{k_r}})_\Sigma\simeq H^1(\Sigma,(T\Sigma)\otimes{\mc O}(-\sum(k_i+1)X_i)).$$

\subsubsection{Virasoro uniformization}

We proceed by describing Virasoro uniformization of these Teichm\"uller spaces, as introduced by Kontsevich and Beilinson-Schechtman (\cite{Kont_Vir,BeiSch}). Let $\Sigma$ be a reference Teichm\"uller curve, ${\mc T}$ the Teichm\"uller space of curves which are diffeomorphic to it. Then $(T{\mc T})_\Sigma\simeq H^1(\Sigma,T\Sigma)$ by the Kodaira-Spencer isomorphism \eqref{eq:KSisom}. Choose a point $X\in\Sigma$ and a local analytic coordinate $z$ at $X$ ($z(X)=0$). Let $D=z^{-1}(D(0,\eta))$ a disk neighborhood of $X$ in $\Sigma$, for $\eta$ small enough. The covering ${\mf U}=\{D,\Sigma\setminus\{X\}\}$ yields an injective map $H^1({\mf U},T\Sigma)\rightarrow H^1(\Sigma,T\Sigma)$. Actually, ${\mf U}$ is a Leray covering: indeed, $H^1(D,T\Sigma)=0$ by Dolbeault's lemma, and $H^1(\Sigma^\times,T\Sigma)=0$ since $H^1(\Sigma^\times,{\mc O})=0$ and the Mittag-Leffler problem is solvable on the non-compact Riemann surface $\Sigma^\times=\Sigma\setminus\{X\}$ (see e.g. \cite{Forster} Section 26). Hence:
\begin{equation}\label{eq:Virunif0}
H^1(\Sigma,T\Sigma)\simeq H^1({\mf U},T\Sigma)\simeq (T\Sigma)(D^\times)/\left((T\Sigma)(D)+(T\Sigma)(\Sigma^\times)\right)
\end{equation}
where ${\mc F}(U)$ denotes the sections of the sheaf ${\mc F}$ on the open set $U$ and $D^\times$ is the punctured disk $D\setminus\{X\}$ (for instance, $(T\Sigma)(D^\times)$ is the vector space of holomorphic vector fields on the punctured disk $D^\times$). In words, the tangent space to ${\mc T}$ at $\Sigma$ can be represented by the holomorphic vector fields in the punctured analytic disk $D^\times$  modulo holomorphic vector fields defined in $D$ or in $\Sigma^\times$. Similarly,
$$H^1(\Sigma,(T\Sigma)\otimes{\mc O}(-X)^k)\simeq H^1({\mf U},(T\Sigma)\otimes{\mc O}(-X)^k)).$$
Jets at other points may also be marked in a similar way.

Writing vector fields on $D^\times$ in the local coordinate $z$, a Laurent vector field $v\in \C((z))\partial_z$ (here $\C((z))=\C[[z]][z^{-1}]$) yields an element of $H^1({\mf U},T\Sigma(-kX))$, hence of $H^1(\Sigma,T\Sigma(-kX))$ (there is no convergence issue since we quotient by vector fields in $D$ vanishing at order $k+1$ at $X$). A Laurent vector field converging in some annulus $r_1D\setminus r_2D$ around $X$ also yields such an element (by considering the covering $\{r_1D,\Sigma\setminus r_2D\}$). Thus we get a map
\begin{equation}\label{eq:Virunifcomp}
\C[z,z^{-1}]\partial_z/(z^{k+1}\C[z]\partial_z)\simeq\C((z))\partial_z/(z^{k+1}\C[[z]]\partial_z)\longrightarrow H^1(\Sigma,(T\Sigma)\otimes{\mc O}(-X)^k)\simeq (T{\mc T}_{1^k})_\Sigma
\end{equation}
It is worth pointing out that any class in the RHS can be represented by an element of $\C[z,z^{-1}]\partial_z$ (rather than a general holomorphic vector field in $D^\times$ as in \eqref{eq:Virunif0}). Using e.g. Riemann-Roch and the prime form, one can construct a replicating kernel $S_L(z,w)$ s.t. $S_L(.,w)$ is a section of $L=T\Sigma(-kX)$; $S_L(.,w)$ is a section of $K\otimes L^{-1}$; $S_L$ is biholomorphic except on the diagonal, where it has an expansion $S_L(z,w)=\frac{dw}{2i\pi(z-w)}$ in any trivialization, and at $X$ where it has a pole of bounded order. Then if $s$ is a section of $L$ on $D^\times$, $C$ a contour around the puncture in $D^\times$, then $(S_Ls)(z)=\oint_C s(w)S_L(z,w)dw$ is a meromorphic section of $L$ with a pole at $X$ and a jump across $C$ given by $s$. Consequently $S_Ls$ is meromorphic in $D$ and represents the same class of $H^1(\Sigma,L)$ as $s$ does.

This argument shows that the map \eqref{eq:Virunifcomp} is surjective (from \eqref{eq:Virunif0}); and has kernel given by restrictions of holomorphic vector fields on $\Sigma^\times$ to $D^\times$ (here $\partial_z=\frac\partial{\partial z}$).

In the case of bordered surfaces, we will consider deformations at boundary points. Then we have the corresponding statement for the map
$$\R((z))\partial_z/(z^{k+1}\R[[z]]\partial_z)\longrightarrow H^1(\Sigma,(T\Sigma)\otimes{\mc O}(-X)^k)\simeq(T{\mc T}_{1^k})_\Sigma$$
where $(\Sigma,X,\dots)$ is a bordered surface with a $k$-jet marked at the boundary point $X$; ${\mc T}_{1^k}$ is the corresponding Teichm\"uller space; $T\Sigma$ is the sheaf of analytic vector fields flowing along the boundary of $\Sigma$; and $z$ is a local coordinate mapping a neighborhood of $X$ in $\Sigma$ to a neighborhood of $0$ in $\H$. As explained earlier, it follows from \eqref{eq:Virunifcomp} by doubling arguments.

\subsubsection{Witt algebra representation}\label{Sec:Wittrep}

We want to realize first the Witt algebra and later the Virasoro algebra and their universal enveloping algebras as differential operators on a space of smooth functions. We start with a discussion of the Witt algebra. 

The (real) Witt algebra is the Lie algebra with basis $(\ell_n)_{n\in\Z}$ and bracket given by
\begin{equation}\label{eq:Wittcomm}
[\ell_m,\ell_n]=(m-n)\ell_{m+n}
\end{equation}
for $m,n\in\Z$. It may be realized as $\R[z,z^{-1}]\partial_z$, with $\ell_n=-z^{n+1}\partial_z$.

\paragraph{Formal coordinates.}

Let us fix $\Sigma^s$ a bordered smooth surface; let ${\mc T}_{1^k}$ be the Teichm\"uller space of surfaces diffeomorphic to $\Sigma^s$ with marked points $X,X_1,\dots,X_n$ ($X$ a boundary point) and a marked $k$-jet at $X$; the deformation occurs at $X$ and one keeps track of the ``spectator" points $X_1,\dots,X_n$ (we omit this additional marking from the subscript of ${\mc T}_{1^k}$). We have natural smooth covering maps ${\mc T}_{1^{k+1}}\rightarrow{\mc T}_{1^k}$ and we may define the projective limit
$${\mc T}_{1^\infty}=\lim_{\longleftarrow}{\mc T}_{1^k}$$
As a topological space, it may be equipped with the initial topology, viz. the coarsest topology making the canonical projections $\pi_k:{\mc T}_{1^\infty}\rightarrow{\mc T}_{1^k}$ continuous. Concretely, a basis of the topology of ${\mc T}_{1^\infty}$ is given by $(\pi_k^{-1}(U_{k,\alpha}))_{k,\alpha}$, $(U_{k,\alpha})_{\alpha\in A}$ a basis of the topology of ${\mc T}_{1^k}$. 

A point in ${\mc T}_{1^\infty}$ is represented by a Teichm\"uller surface with a marked point $X$ and a formal local coordinate at that point, viz. an invertible (for composition) element of the completed local ring $\hat{\mc O}_X=\varprojlim{\mc O}_X/{\mf m}_X^k$. If $z$ is a (genuine) local coordinate at $X$, a formal local coordinate is a formal power series $\sum_{n\geq 1}a_nz^n$, $a_1\neq 0$ (e.g. $\sum_{n\geq 1} n!z^n$). We will consider in particular marked points on the boundary, in which case we have $a_n\in\R$, $a_1>0$ (if $z$ maps a neighborhood of $X$ to a neighborhood of $0$ in $\H$).

The projection $\pi_k$ corresponds to the truncation of formal local coordinate 
$$\sum_{n\geq 1}a_nz^n\longmapsto \sum_{n\geq 1}a_nz^n\mod z^{k+1}\R[[z]]$$
Remark that the map itself is independent of the choice of local coordinate $z$.

Next we want define a notation of smooth functions (and more generally sections) on ${\mc T}_{1^\infty}$. One possible route is to realize ${\mc T}_{1^\infty}$ as a Fr\'echet manifold (\cite{Ham_Frechet}). Instead (but essentially equivalently) we follow an elementary approach suited to the situation. We can simply define
$$C^\infty({\mc T}_{1^\infty})=\varinjlim C^\infty({\mc T}_{1^k})$$
where the direct limit is taken as sheaves. This means that if $U\subset{\mc T}_{1^\infty}$ is open, $f:U\rightarrow\R$ is smooth iff $f$ can be written locally as the pullback of a smooth function on one of the ${\mc T}_{1^k}$, i.e. iff there is a collection of open sets $U_\alpha$ of ${\mc T}_{1^{k(\alpha)}}$ and smooth (in the usual sense) functions $f_\alpha:U_\alpha\rightarrow\R$ such that $f=f_\alpha\circ\pi_{k(\alpha)}$ on $\pi_{k(\alpha)}^{-1}(U_\alpha)$, and $U$ is covered by the $\pi_{k(\alpha)}^{-1}(U_\alpha)$'s.

\paragraph{Construction of the $\ell_n$'s.}

We will now define local operators: $\ell_n:C^\infty(U)\rightarrow C^\infty(U)$ that represent the Witt algebra. From the definition of $C^\infty(U)$, it is enough to evaluate $\ell_n(f\circ\pi_k)$ when $f\in C^\infty(U_k)$, $U_k$ an open set of ${\mc T}_{1^k}$, $k\in\N$ fixed.

Let $(\Sigma,X,\tilde z,\dots)$ a point in $U_{k'}$, i.e. a Teichm\"uller surface with a marked boundary point $X$ and $\tilde z$ a $k'$-jet of local coordinate at $X$ ($k'\geq k$); here $U_{k'}=\pi_{k',k}^{-1}(U_{k})$. Let us extend $\tilde z$ to an actual local coordinate $z$ (i.e. $\tilde z=z\mod z^{k'+1}{\mc O}_X$); $z$ identifies a neighborhood of $X$ in $\Sigma$ with a neighborhood of $0$ in the upper half-plane ${\mb H}$. Let $r>0$ be small enough so that $z^{-1}$ is defined and analytic on $D(0,2r)\cap {\mb H}$; set $A=\{z\in{\mb H}: \frac 34r<|z|<\frac 54r\}$. We may represent $\Sigma$ as $z^{-1}(D(0,\frac 32r))$ and $\Sigma\setminus z^{-1}(D(0,r/2))$ glued along their intersection.

In ${\mb H}$, consider the flow of analytic maps defined by $h_0(z)=z$, $\dot h_t(z)=-h_t^{n+1}(z)$, i.e. the flow generated by the vector field $-z^{n+1}\frac{\partial}{\partial z}$. For any fixed semi-annulus around $0$, $h_t$ is analytic on this semi-annulus for $t$ small enough. Then $z^{-1}\circ h_t$ maps $A$ to a semi-annulus in $\Sigma\setminus z^{-1}(h_t(D(0,r/2))$. Let $\Sigma_t$ be the surface obtained by identifying these two open sets via $h_t\circ z$. Then $\Sigma_t$ has the same smooth type and markings as $\Sigma=\Sigma_0$ and has also a distinguished local coordinate at $X$. The surface $\Sigma_t$ and the germ of the local coordinate (a fortiori its $k$-jet) do not depend on the choice of annulus (for $r$ small enough). Thus for $t$ small we have a path $t\mapsto (\Sigma_t,z\mod z^{k+1}{\mc O}_X,X,\dots)$ in ${\mc T}_{1^k}$. This path is smooth (as in the general Kodaira-Spencer construction); at $t=0$, $\frac d{dt}\Sigma_t$ is the tangent vector given by the class of $-z^{n+1}\partial_z$ in $H^{1}(\Sigma,(T\Sigma)\bigotimes{\mc O}(-X)^k)\simeq (T{\mc T}_{1^k})(\Sigma,\dots)$. This class does not depend merely on the $k$-jet $\tilde z$, but rather on a $k'$-jet for a large enough $k'$. Indeed, given two local coordinates $z_1,z_2$ at $X$, observe that if $z_2=z_1\mod z_1^{k'+1}{\mc O}_X$, then
$$-z_1^{n+1}\partial_{z_1}=-z_2^{n+1}\partial_{z_2}\mod z_2^{k+1}{\mc O}_X\partial_{z_2}$$
provided that $k'\geq k-n$. Consequently, we can define a function $\ell_n f$ on $U_{k'}$ by:
\begin{equation}\label{eq:lndef}
(\ell_n f)(\Sigma,X,\tilde z,\dots)={\frac{d}{dt}}_{|t=0}f(\Sigma_t,X,\tilde z,\dots)
\end{equation}
provided that $k'\geq k+n^-$, where $n^-=\max(-n,0)$.

Remark that, for $n<0$, $\ell_n$ does not define a vector field (or derivation) on any of the ``classical" (finite-dimensional) Teichm\"uller spaces ${\mc T}_{1^k}$, which motivates the introduction of the projective limit ${\mc T}_{1^\infty}$.

\paragraph{Smoothness.}

Then we need to verify that $\ell_n f$ is smooth, i.e. is in $C^\infty(U_{k'})$. For notational simplicity we will check that $\ell_nf$ is smooth in the case where $X$ is a bulk (viz. interior), rather than boundary, point. The proof in the case of a boundary point is very similar, using reflection/doubling arguments.

Let us describe a neighborhood of $(\Sigma,X,\tilde z)$ in ${\mc T}_{1^{k'}}$. As before, we fix a local coordinate $z$ at $X$ with $k'$-jet $\tilde z$ and describe $\Sigma$ as the gluing of a semidisk $D(0,r)$ and $\Sigma\setminus z^{-1}(D(0,r/2))$, identified via $z$. Let $g_{\underline t}$ be a smooth family of analytic maps defined in the semi-annulus $A$ with $g_{0,\dots,0}(z)=z$ (here $d$ is the dimension of the Teichm\"uller space ${\mc T}_{1^{k'}}$ and $\underline t=(t_1,\dots,t_d)$) and $\Sigma_{\underline t}$ be the surface obtained by twisting the identification along the annulus by $g_{\underline t}$. If the vector fields $\partial_{t_1}g_{\underline t}(z)\partial_z$, \dots, $\partial_{t_d}g_{\underline t}(z)\partial_z$ map to a basis of $H^1(\Sigma,(T\Sigma)\bigotimes{\mc O}(-k'X))$, then $(t_1,\dots,t_d)$ are smooth local coordinates for ${\mc T}_{1^{k'}}$ near $(\Sigma,X,\tilde z)$, and $\partial_{t_1},\dots,{\partial}_{t_d}$ are smooth vector fields. %

From \eqref{eq:lndef}, we may write
$$(\ell_n f)(\Sigma_{\underline t})={\frac{d}{d\eps}}_{|\eps=0}f(\Sigma_{\underline t,\eps},X,\tilde z,\dots)$$
where $\Sigma_{\underline t,\eps}$ is the surface obtained from $\Sigma$ by twisting the gluing by $h_\eps\circ g_{\underline t}$. Since $\underline t\mapsto \Sigma_{\underline t}$ is a complete family of deformations in the sense of \cite{Kodaira}, there is a smooth map $(\underline t,\eps)\mapsto \underline s(\underline t,\eps)$ s.t. for small $\underline t,\eps$, $\Sigma_{\underline t,\eps}$ and $\Sigma_{\underline s}$ are equivalent in ${\mc T}_{1^{k'}}$. It follows that $\ell_n f$ is smooth.

In conclusion we have defined by \eqref{eq:lndef} an operator $\ell_n: C^\infty(U_k)\rightarrow C^\infty(U_{k'})$ provided that $k'\geq k+n^-$. More precisely, if ${\rm Der}(U_k)$ is the space of derivations of functions on $U_k$, we have
$$\ell_n\in {\rm Der}(U_k)\otimes_{C^\infty(U_k)}C^\infty(U_{k'})$$
In coordinates, if $t_1,\dots,t_d$ are smooth coordinates on $U_k$, then $\ell_n f=\sum_i g_i\frac{\partial}{\partial t_i}f$ where the $g_i$'s are smooth functions on $U_{k'}$. Remark also that the construction of $\ell_n$ commutes with the natural inclusions $C^\infty(U_k)\hookrightarrow C^\infty(U_{k+1})$ (this is immediate e.g. from the representation \eqref{eq:lndef}). Consequently, we may define
$$\ell_n: C^\infty(U)\longrightarrow C^\infty(U)$$
for any open set $U$ of ${\mc T}_{1^\infty}$.

\paragraph{Bracket.}

We then want to check that $\R((z))\partial_z\rightarrow \End(C^\infty(U_\infty))$, $-z^{n+1}\partial_z\mapsto\ell_n$ is a Lie algebra morphism. Let $(g_t)_{t\geq 0}$, $(h_t)_{t\geq 0}$, $(k_t)_{\geq 0}$ be the flows of analytic maps generated (in $\H$) by the vector fields $-z^{m+1}\partial_z$, $-z^{n+1}\partial_z$, $[-z^{m+1}\partial_z,-z^{n+1}\partial_z]=-(m-n)z^{m+n+1}\partial_z$; for small $t>0$, these are defined in a neighborhood of a fixed semicircle in ${\mb H}$. As before we fix a surface $(\Sigma,X,\dots)$ with a local coordinate at $X$ and consider $\Sigma_{t,s}$ the surface obtained by twisting the gluing by $g_t\circ h_s$, and $\tilde\Sigma_{t,s}$ the surface obtained by twisting the gluing by $h_s\circ g_t$. Then for $f\in C^\infty(U_k,V_k)$
\begin{align*}
(\ell_m\ell_n f)(\Sigma,\dots)&=\frac{d}{dt}_{|t=0}\frac{d}{ds}_{|s=0}f(\Sigma_{t,s},\dots)\\
(\ell_n\ell_m f)(\Sigma,\dots)&=\frac{d}{ds}_{|s=0}\frac{d}{dt}_{|t=0}f(\tilde\Sigma_{t,s},\dots)
\end{align*}
We have: $g_t\circ h_s\circ g_t^{-1}\circ h_s^{-1}=k_{st}+o(st)$ in the sense of uniform convergence of analytic maps on compacts subsets of $\overline{\mb H}\setminus\{0\}$. Let us define $\hat\Sigma_{t,s}$ the surface obtained from $\tilde \Sigma_{t,s}$ by twisting the gluing by $k_{st}$. Then (e.g. using again the notion of complete family of \cite{Kodaira})
\begin{align*}
f(\Sigma_{t,s})-f(\hat\Sigma_{t,s})&=o(st)\\
f(\hat \Sigma_{t,s})-f(\tilde\Sigma_{t,s})&=st(m-n)\ell_{m+n}f(\tilde\Sigma_{t,s})+o(st)
\end{align*}
Note that the limit \eqref{eq:lndef} is locally uniform. Consequently 
$$\ell_m\ell_n-\ell_n\ell_m=(m-n)\ell_{m+n}\in \End(C^\infty(U_\infty,V_\infty))$$

For $n\geq -2$, we have concrete representations for the $\ell_n$'s operating on $C^\infty(U_\infty)$. Take $f\in C^\infty(U_\infty)$, then we can evaluate $f$ at the surface $(\Sigma,X,\dots,z)$ where $x$ is a local coordinate at $X$; $f$ depends on $z$ only through a $k$-jet, $k$ locally bounded. Then
\begin{align*}
(\ell_nf)(\Sigma,X,\dots,z)&=\frac{d}{d\eps}_{|\eps=0}f(\Sigma,X,\dots,z-\eps z^{n+1})&{\rm if\ }n\geq 0\\
(\ell_{-1}f)(\Sigma,X,\dots,z)&=\frac{d}{d\eps}_{|\eps=0}f(\Sigma,z^{-1}(\eps),\dots,z-\eps)\\
-(\ell_{-2}f)(\Sigma,X,\dots,z)&=\frac{d}{d\eps}_{|\eps=0^+}f(\Sigma\setminus z^{-1}([0,i\sqrt{2\eps}]),z^{-1}(i\sqrt{2\eps}),\dots,\sqrt{z^2+2\eps})\\
\end{align*}
This suggests the following alternative (and rather elementary) approach: starting from these expressions, verify that the Witt commutation relations \eqref{eq:Wittcomm} hold for $m,n\geq -2$ (though doing this cleanly seems to require an argument essentially isomorphic to the one we used). Then if we define inductively $\ell_{-n-1}=\frac{1}{1-n}[\ell_{-n},\ell_{-1}]$ for $n\geq 2$, \eqref{eq:Wittcomm} holds for $m,n\in\Z$.

\subsubsection{Sewing}\label{sssec:sewing}

Here we briefly - and rather informally - discuss the relation of this construction with other formalisms related to the sewing of surfaces (leading to modular functors and vertex operator algebras), following Segal \cite{Segal} (see also the approach of Vafa \cite{Vafa_punctured} for punctured surfaces), and subsequently developed in particular by Huang \cite{Huang_VOA}.

Consider a surface $\Sigma$ with marked points $X_1,\dots,X_n$ and corresponding marked (genuine) local coordinates $z_1,\dots,z_n$ (so that $z_i(X_i)=0$). Let $D=D(0,1)$ denote the unit disk in $\C$. If the coordinates are s.t. $z_i^{-1}:D\rightarrow\Sigma$ is well-defined and the $z_i^{-1}(D)$'s are pairwise disjoint, then $\Sigma\setminus\cup_iz_i^{-1}(D)$ is a surface with $n$ holes and analytically parameterized boundary circles (see \cite{Segal}). Conversely, in a surface with holes and paramaterized boundary circles, one can fill the holes with analytic disks and recover a surface with marked points and local coordinates.

Consider now two surfaces with marked points and coordinates $(\Sigma,X_1,\dots,X_m,z_1,\dots,z_m)$ and $(\Sigma',Y_1,\dots,Y_n,w_1,\dots,w_n)$. Provided that $z_m^{-1}:D\rightarrow\Sigma$ and $w_1^{-1}:D\rightarrow\Sigma'$ are well defined (and their ranges do not include other marked points), one can define a sewed surface
$$\Sigma''=\Sigma{\rm\ }\vphantom{\infty}_m\infty_1{\rm\ }\Sigma'$$
by excising the disk $z_m^{-1}(D)$ (resp. $w_1^{-1}(D)$) from $\Sigma$ (resp. $\Sigma'$) and gluing the excised surfaces along the unit circle (a point $W$ on $\Sigma$ and $W'$ on $\Sigma'$ are identified if $|z_m(W)|=|w_1(W)|=1$ and $z_m(W)w_1(W')=1$). Here $\vphantom{\infty}_m\infty_1$ denotes the sewing operation (relative the ``out" point $X_m$ and the ``in" point $Y_1$), and the resulting surface has a natural complex structure and inherits all other markings ($X_1,\dots$).

Sewing is then a partially defined associative operation on the collection of such marked surfaces. The Riemann sphere $(\hat\C,0,\infty)$ with the standard local coordinate $z$ (resp. $-z^{-1}$) at $0$ (resp. $\infty$) is an identity for this sewing operations.

For $n\in\Z$, $t\in\R$, one can consider perturbations of that identity element given by
\begin{equation*}
\left\{\begin{array}{llll}
\Sigma_n(t)&=(\hat\C,0,\infty,z(1-ntz^n)^{-1/n},z^{-1})&&{\rm if\ }n>0\\
\Sigma_n(t)&=(\hat\C,0,\infty,ze^{-t},z^{-1})&&{\rm if\ }n=0\\
\Sigma_n(t)&=(\hat\C,0,\infty,z,z^{-1}(1-ntz^n)^{-1/n})&&{\rm if\ }n< 0
\end{array}
\right.
\end{equation*}
One may check that the $\Sigma_n(.)$'s are partial one-parameter groups in the sense that
$$\Sigma_n(t)\vphantom{\infty}_2\infty_1\Sigma_n(t')=\Sigma_n(t+t')$$
for $t,t'$ small enough (this follows from $h_t\circ h_{t'}=h_{t+t'}$ near $0$ if we denote $h_t(z)=z(1+ntz^n)^{-1/n}$, the flow generated by the vector field $z\mapsto -z^{n+1}\partial_z$). 

Then we may think of the $\Sigma_n$'s as an exponentation of the Witt algebra with ``$\Sigma_n(t)=\exp(t\ell_n)$" \cite{Huang_VOA}, e.g. in the sense that for small $t$
$$\Sigma_m(t)\infty\Sigma_n(t)\infty\Sigma_{-m}(t)\infty\Sigma_{-n}(t)\simeq\Sigma_{m+n}((m-n)t^2)$$
(compare with \eqref{eq:Wittcomm}).

In this framework, one can interpret \eqref{eq:lndef} as follows. If $(\Sigma,X,z,\dots)$ is a surface with a marked boundary point $X$ and local coordinate $z$, one can consider the family of deformations
$$t\mapsto\Sigma_n(t)\vphantom{\infty}_2\infty_1(\Sigma,X,z,\dots)$$
and for a suitable test function $f$ set
$$(\ell_n f)(\Sigma,X,z,\dots)={\frac{d}{dt}}_{|t=0}f\left(\Sigma_n(t)\vphantom{\infty}_2\infty_1(\Sigma,X,z,\dots)\right)$$
(The sewing operation has natural compatibilities with doubling, so that it is not really problematic to consider boundary deformations). The difficulty consists in defining a suitable class of test functions on which this operation is defined and the commutation relations \eqref{eq:Wittcomm} are satisfied, which we addressed directly in the previous section.

\section{Determinants of Laplacians}

In this section, we recall the definition of $\zeta$-regularized determinants of Laplacians, and the properties we will need later, in particular the Polyakov-Ray-Singer conformal anomaly. A relation with the Brownian loop measure is also pointed out and used to (re)derive some of these properties. This relation will also be used for analytic surgery formulae in Appendix \ref{sec:surgery}.

\subsection{Loop measures}

The Brownian Loop measure was introduced and studied in \cite {LW}, motivated by Conformal Restriction measures (\cite{LSW3}). We give here a slightly more general construction (see also {\cite{LJ_loops}). 

Consider a Riemannian manifold $(M,g)$, possibly with boundary. There is a natural Brownian Motion on $M$, with generator the Laplace-Beltrami operator $\Delta$. In local coordinates,
$$\Lap=\frac 1{\sqrt{\det g}}\sum_{i,j}\frac{\partial}{\partial x_i}g^{ij}\sqrt{\det g}\frac{\partial}{\partial x_j}$$
where $g^{ij}=(g^{-1})_{ij}$. 
(We follow here the analytic, rather than %
the geometric convention, that considers a positive operator).
We restrict ourselves for now to Dirichlet boundary conditions: the process is killed upon hitting the boundary. This gives a semigroup $(P_t)_t=(e^{t\Lap})_t$ operating on $C_0^\infty(M)$ and a family $({\mb W}^x)_{x\in M}$ of subprobability measures on paths (or probability measures if, as is customary, one extends the state space $M$ by a cemetery state $\partial$). Here $M\cup\{\partial\}$ is a compactification of $M$, and ${\mb W}^x$ denotes the measure on the path space $C([0,\infty),M\cup\{\partial\})$ induced by Brownian Motion started from $x\in M$. Note that this corresponds to Brownian motion running at speed 2.

Given $f_1,f_2\in{\mb L}^2(M)$, $F$ a bounded Borel functional on $C([0,\infty),M\cup\{\partial\})$ with Skorokhod topology ($\partial$ is an isolated cemetery state), one can consider:
$$(f_1,f_2)\mapsto\left(F\mapsto\int_0^\infty\frac{dt}t\int_M f_1(x){\mb W}^x(F(X_{0\leq s\leq t})f_2(X_t))dA(x)\right)$$
where $dA$ is the volume measure associated with $g$ (in local coordinates, $dA=\sqrt{\det g} dx_1\wedge\cdots\wedge dx_n$). This defines an operator 
from ${\mb L^2(M)}\otimes{\mb L^2(M)}$ to measures on paths, or equivalently an operator on ${\mb L}^2(M)$ taking values in measures on paths. Taking the trace of this operator, one gets a measure on paths. This measure is supported $\nu_r$ on loops.

More explicitly, the measure ${\mb W}^x$ on paths starting from $x$ can be disintegrated w.r.t. $X_t$; it is well-known that the distribution of $X_t$ is absolutely continuous w.r.t. $A$. If $p_t(x,y)$ designates the heat kernel, we have
$${\mb W}^x=\int_M p_t(x,y){\mb W}_{t,x,y}dA(y)$$
where ${\mb W}_{t,x,y}$ is the bridge measure on paths from $x$ to $y$ with lifetime $t$.
Then the measure $\nu_r$ can be written as:
$$\nu_r\int_0^\infty\frac {dt}t\int_{M}p_t(x,x){\mb W}_{t,x,x}dA(x)$$

Consider the following set of (rooted) loops:
$\{\gamma\in C([0,t],M):\gamma(0)=\gamma(t)\}$. There is an equivalence relation $\sim$ given by: $\gamma_1\sim\gamma_2$ if $\gamma_1(.)=\gamma_2(t_0+.)$ for some $t_0$, in the sense of periodic continuation. Classes of equivalence are unrooted (but oriented) loops of lifetime $t$. The measure induced on unrooted loops is the Brownian loop measure, and will be denoted here by $\nu$. A coarser equivalence relation is given by: $\gamma_1\sim\gamma_2$ if $\gamma_1=\gamma_2\circ \iota$, where $\iota:\R\rightarrow\R$ is an increasing bijection with $\iota(x+t_1)=\iota(x)+t_2$ ($t_i$ is the period of $\gamma_i$, $i=1,2$). Classes of equivalence are unrooted, oriented loops up to reparameterization. One can equip this space with a natural metric (viz. uniform distance minimized over reparameterizations).

\begin{Prop}[\cite{LW}]\label{Prop:loopmeas}
\begin{enumerate}
\item  (Restriction) If $K\subset M$, the loop measure on $M$ restricted to loops contained in $M\setminus K$ is the loop measure on $M\setminus K$.
\item  (Conformal Invariance) In dimension 2, the measure on loops up to time reparameterization induced by the loop measure (on parameterized, unrooted loops) is invariant under Weyl scaling: $g\rightarrow e^{2\sigma}g$.
\end{enumerate}\end{Prop}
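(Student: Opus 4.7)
For Part (1), the plan is to establish the identity at the level of rooted loop measures; the rooting-forgetting map commutes with the restriction $\{\gamma\subset M\setminus K\}$, so this reduces to the $\nu_r$-statement. Starting from
$$\nu_r^M=\int_0^\infty\frac{dt}{t}\int_M p_t^M(x,x)\,\mathbb{W}^M_{t,x,x}\,dA(x),$$
the essential input is the probabilistic characterization of the Dirichlet Laplacian: Brownian motion on $M$ killed upon first hitting $K$ coincides in law with Brownian motion on $M\setminus K$ with Dirichlet conditions on $\partial(M\setminus K)$. Disintegrating by the endpoint gives, for $x,y\in M\setminus K$,
$$p_t^M(x,y)\,\mathbb{W}^M_{t,x,y}\big|_{\{\gamma\cap K=\emptyset\}}=p_t^{M\setminus K}(x,y)\,\mathbb{W}^{M\setminus K}_{t,x,y}.$$
Setting $y=x$ and integrating (noting that loops rooted in $K$ cannot avoid $K$) yields $\nu_r^M|_{\{\gamma\subset M\setminus K\}}=\nu_r^{M\setminus K}$, and quotienting by rooting gives the claim for $\nu$. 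This part is essentially routine.

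For Part (2), the strategy is to exploit the pathwise time change of Brownian motion under Weyl scaling. Since in two dimensions $\Delta_{e^{2\sigma}g}=e^{-2\sigma}\Delta_g$, if $X$ is $g$-BM and $T(t):=\int_0^t e^{2\sigma(X_u)}\,du$, then $Y_s:=X_{T^{-1}(s)}$ is $e^{2\sigma}g$-BM started from the same point. Thus $X$ and $Y$ trace out the same unparameterized loops, which furnishes a canonical pathwise bijection between $g$- and $e^{2\sigma}g$-BM trajectories respecting the quotient by time reparameterization. Given this bijection, it remains to match the weights of the two rooted loop measures. Four ingredients contribute: (i) the area element transforms as $dA_{e^{2\sigma}g}=e^{2\sigma}\,dA_g$; (ii) the infinitesimal time element transforms by $ds/dt=e^{2\sigma(X_t)}$; (iii) the heat-kernel ratio $p_s^{e^{2\sigma}g}(x,x)/p_t^g(x,x)$ together with the transport of the bridge measure $\mathbb{W}_{t,x,x}$ under the time change, which is read off from Girsanov/conjugation of generators; and (iv) the rooting factors $1/t$ and $1/s$, which encode the uniform choice of base point on an unrooted loop in the two parameterizations. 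In dimension two these four factors combine to give exact cancellation once one passes to the quotient by reparameterization; in other dimensions there is a leftover $e^{(n-2)\sigma}$ and the argument fails.

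The main obstacle is the weight-balance step above. The cleanest organization, in my view, is to first recast $\nu_r$ in a ``trace-formula'' presentation against cylinder test functions, schematically
$$\nu_r(F)=\int_0^\infty\frac{dt}{t}\,\mathrm{Tr}\bigl(F_t\circ P_t^{\bullet}\bigr),$$
then use the pathwise conjugation between $P_t^g$ and $P_{T(t)}^{e^{2\sigma}g}$ on loops to reduce the identity to showing that $\frac{dt}{t}$ and $\frac{ds}{s}$ are intertwined modulo time reparameterization of the underlying unrooted loop. This last statement --- morally, that the uniform ``infinitesimal rooting measure" on a cyclic loop is parameterization-independent --- is the heart of the conformal invariance, and is where the two-dimensional restriction is essential.
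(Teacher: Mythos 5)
Your plan for both parts runs along the same lines as the paper's own proof: Part (1) is the Dirichlet-killing observation, and Part (2) is the pathwise time-change identification of $e^{2\sigma}g$-Brownian motion with a reparameterized $g$-Brownian motion (via $\Lap_{e^{2\sigma}g}=e^{-2\sigma}\Lap_g$ in dimension two), followed by the rerooting invariance of the rooted loop measure $\nu_r$ to cancel the residual weight. Two points deserve attention.

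First, the weight-balance/rerooting step is named but not actually carried out, and it is the entire content of the proof. In the paper this is a short explicit computation: the time change gives $d\nu_r'=\frac{T}{S}\,\frac{dS}{dT}(\gamma_0)\,d\nu_r$ with $S=\int_0^T e^{2\sigma}(\gamma_t)\,dt$, and then for a shift-invariant functional $F$ one reroots uniformly in $[0,T]$ to obtain
$$\int F\,d\nu_r'=\int F\,\frac{1}{S}\int_0^T\frac{dS}{dT}(\gamma_\tau)\,d\tau\,d\nu_r=\int F\,d\nu_r.$$
Your phrase about $dt/t$ and $ds/s$ being ``intertwined modulo reparameterization'' is a name for this cancellation, not a derivation of it. (The appeal to Girsanov in step (iii) is also a red herring: the conjugation is a pathwise time change, not an absolutely continuous change of measure.)

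Second, and more substantively, you locate the two-dimensional restriction in the rooting/intertwining step, but that step is dimension-free: the argument shows $\nu(L)=\nu(e^{2\sigma}L)$ for any generator $L$ in any dimension, exactly as the paper remarks after the proof. What is specific to dimension two is the identity $\Lap_{e^{2\sigma}g}=e^{-2\sigma}\Lap_g$. For $n>2$ the conformally rescaled Laplacian acquires a first-order drift term proportional to $(n-2)e^{-2\sigma}\,\nabla\sigma\cdot\nabla$, so the $e^{2\sigma}g$-Brownian motion is not a time change of the $g$-Brownian motion at all; the argument breaks at the conjugation step, not because a scalar factor $e^{(n-2)\sigma}$ fails to cancel in the weight balance.
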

\begin{proof}
The restriction property is immediate (Dirichlet boundary conditions).\\
The second property is a consequence of time-change properties of Brownian motion and cyclical reindexing of Markovian loops.
If $g'=e^{2\sigma} g$, then $\Lap_{g'}=e^{-2\sigma}\Lap_g$; if $(X_t)_{t\geq 0}$ is the Brownian motion corresponding to $g$, $s=\int_0^ue^{2\sigma}(X_u)du$ is a random time change, then $X'_s=X_{t(s)}$ is the Brownian motion corresponding to $g'$.

Consider a bounded Borel functional $F$ on rooted paths, up to time change, and $g$ a Borel function on $\R^+$. Then the time change result means that:
$${\mb W}^x(F(X_{0\leq t\leq T})g(T))=({\mb W}^x)'(F(X_{0\leq t\leq T})g(S))$$
where $S=\int_0^Te^{2\sigma}(X_t)dt$. It follows that 
$$d\nu_r'(\gamma_.)=\frac TS\cdot\frac{dS}{dT}(\gamma_0)d\nu_r(\gamma_.)$$
where $\nu_r,\nu'_r$ denote the rooted loop measures relative to $g,g'$.

To proceed to unrooted loops, observe that (from the simple Markov property):
$$p_t(x,z){\mb W}_{t,x,z}=\int_M p_\tau(x,y)p_{t-\tau}(y,z) {\mb W}_{\tau,x,y}\bullet{\mb W}_{t-\tau,y,z}dA(y)$$
where $\tau\leq t$ is fixed and $\bullet$ designates concatenation of paths. It follows that 
if $\theta_\tau$ is a shift operator on loops, then $\nu_r$ is invariant under rerooting loops:
$$\int Fd\nu_r=\iint K(T,d\tau)\theta_\tau Fd\nu_r$$
where $K(T,d\tau)$ is an arbitrary collection of probability measures. We shall use this where $K(T,d\tau)=\ind_{\tau\leq T}\frac{d\tau}T$.
If $F$ is a bounded Borel functional on unrooted loops, then :
$$\int Fd\nu_r'=\int F\frac TS.\frac{dS}{dT}(\gamma_0)d\nu_r=\int\int_0^T\theta_\tau\left(F\frac TS.\frac{dS}{dT}(\gamma_0)\right)\frac{d\tau}Td\nu_r=\int F\frac 1S\int_0^T\frac{dS}{dT}(\gamma_\tau)d\tau d\nu_r=\int Fd\nu_r$$
since $\theta_\tau F=F$, $\theta_\tau S=S$, $\theta_\tau T=T$, and $\theta_\tau(\frac{dS}{dT}(\gamma_0))=\frac{dS}{dT}(\gamma_\tau)$.
\end{proof}

One can extend this definition to Brownian motions (or diffusions) with different boundary conditions, in particular Neumann conditions on some of the boundary components, or oblique conditions in dimension 2. The properties above stay valid for these loop measures with reflection (in the restriction property, it is then understood that the boundary condition on $K$ is Dirichlet, other boundary conditions being unchanged). There are also natural analogues for (discrete state space, discrete or continuous time) Markov chains (\cite{LJ_loops}).
In dimension greater than 2, conformal invariance is no longer satisfied. However (Weyl scaling), the loop measure associated with a generator $L$ is identical to the one associated with $e^{2\sigma}L$.

\subsection{$\zeta$-regularization}

For simplicity, we discuss in details only the case of dimension 2. For the Laplacian on the Riemannian manifold $(M,g)$ (see e.g. \cite{Ros_Lap}), one has the following Pleijel-Minakshisundaram expansion for the heat kernel $p$ at small times:
\begin{equation}\label{eq:pleijel}
p_t(x,x)=\frac 1{4\pi t}+\frac{K(x)}{12\pi}+O(t)
\end{equation}
where $K$ is the Gauss curvature. For $y\neq x$, we have the following large deviation estimate of Varadhan:
$$\lim_{t\searrow 0}\frac 1t\log p_t(x,y)\leq -\frac 14 d(x,y)^2$$
in terms of the geodesic distance. These estimates are uniform on closed compact manifolds. Near (Dirichlet or Neumann) boundary components, one can proceed by doubling (McKean and Singer \cite{McKean_Singer}).

Consider the spectrum of the positive operator $(-\Lap)$ : $0\leq\lambda_1\leq\cdots\leq\lambda_n\cdots$. Since $L^2(M)$ has a Hilbert basis of (smooth) eigenfunctions of $\Delta$, one gets:
$$\int_M p_t(x,x)dA(x)=\Tr(e^{t\Lap})=\sum_i e^{-t\lambda_i}$$
From here the $\zeta$-function of the Laplacian is defined as:
$$\zeta(s)=\sum_{\lambda_i\neq 0}\lambda_i^{-s}=\frac{1}{\Gamma(s)}\int_0^\infty(\Tr(e^{t\Lap})-h^0)t^{s-1}dt$$
where $h^0$ designates the dimension of $\Ker(\Lap)$ (this is zero if there is a Dirichlet boundary condition). The identity follows from $\lambda^{-s}=\frac 1{\Gamma(s)}\int_0^\infty t^{s-1}e^{-\lambda t}dt$.

The $\zeta$-function is absolutely convergent and analytic in $s$ for $\Re s>1$. Moreover, it has a meromorphic continuation to $\C$ (in the variable $s$). Indeed, $\int_1^\infty(\Tr(e^{t\Lap})-h^0)t^{s-1}dt$ is an entire function in $s$ (exponential decay), while the short time heat kernel asymptotics give (for a closed surface):
$$\Tr(e^{t\Lap})-1=\int_M p_t(x,x)dA(x)-1=\frac{A}{4\pi t}+(\frac{\chi(M)}6-1)+O(t)$$
where $\chi(M)=2-2g$ is the Euler characteristic of $M$ (by the Gauss-Bonnet theorem $\chi(M)=\frac 1{2\pi}\int_MKdA$). It follows that  $\Gamma(s)\zeta(s)=\frac{A}{4\pi(s-1)}+(\frac{\chi}6-1)\frac 1s+\cdots$, where the remainder $(\cdots)$ is an analytic function in $\Re s>{-1}$. Further terms in the heat kernel expansion yield meromorphic continuation to $\C$. Remark that $\Gamma(s)\sim s^{-1}$ at $s=0$, so $\zeta(0)=\frac\chi 6-1$, a topological invariant.

In the case where $M$ has a boundary (say with at least one Dirichlet boundary component, so that $h^0=0$), by \cite{McKean_Singer}:
$$\Tr(e^{t\Lap})=\frac{A}{4\pi t}+\frac{\ell_N-\ell_D}{8\sqrt{\pi t}}+(\dots)+O(\sqrt t)$$
where $\ell_D$ (resp. $\ell_N$) is the length of the Dirichlet (resp. Neumann) boundary components, $(\dots)$ are integrals of explicit local quantities (in the interior or on the boundary). As before, this yields a meromorphic continuation of $\zeta$ to $\{s:\Re s>-\frac 12\}$, with simple poles at $s=1,1/2$.

In the case where the boundary is piecewise smooth (with corners), the $\zeta$-function still has a meromorphic continuation; corners contribute to the constant term in the heat kernel expansion, hence to $\zeta(0)$ (\cite{Kac_drum}).

We can now define ${\det}_\zeta(-\Lap)\stackrel{def}{=}e^{-\zeta'(0)}$ if $h^0=0$ and ${\det}'_\zeta(-\Lap)\stackrel{def}{=}e^{-\zeta'(0)}$ otherwise (${\det}_\zeta$ can be thought of as a regularized product of eigenvalues, and ${\det}'_\zeta$ as a regularized product of non-zero eigenvalues).

Alternatively, again in the case $h^0=0$, one can consider the associated loop  measure $\nu$. It is then easy to see that:
\begin{equation}\label{eq:zetaloop}
\zeta(s)=\frac 1{\Gamma(s)}\int T(\gamma)^sd\nu(\gamma)
\end{equation}
where $T(\gamma)$ is the lifetime of the loop $\gamma$ (with generator $\Lap$\ , i.e. running at speed 2). While $\nu$ is defined solely from the complex structure, the functional $T$ depends on the Riemannian metric. Thus one can think (heuristically) of $\zeta'(0)$ as a normalized total mass for the loop measure.

\subsection{Conformal anomaly formulae}

A (closed) Riemann surface $M$ can be equipped with different compatible Riemannian metrics, each yielding a Laplacian and its determinant. The dependence of the determinant on the metric within a conformal class is given by the Polyakov-Ray-Singer conformal anomaly formula (\cite{Pol_bosonic}). Conformal anomalies will be instrumental in our approach to Virasoro representations (in non-zero central charge).

\begin{Thm}
Let $g,g'=e^{2\sigma}g$ be two conformally equivalent metrics on $M$. Then:
\begin{equation}\label{PRS}
\log{\det}'(-\Lap_{g'})-\log{\det}'(-\Lap_{g})=-\frac 1{6\pi}\left(\frac 12\int_M |\nabla\sigma|^2dA+\int_MK\sigma dA\right)+\log A'-\log A\end{equation}
where $dA,\nabla,K$ are the volume element, gradient, scalar curvature associated with $g$.
\end{Thm}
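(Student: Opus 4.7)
The plan is to interpolate $g$ and $g'=e^{2\sigma}g$ by the one-parameter family $g_u=e^{2u\sigma}g$, $u\in[0,1]$, and to establish the formula by differentiating $F(u):=\log{\det}'(-\Delta_{g_u})$ in $u$ and then integrating over $[0,1]$. The key input is the infinitesimal identity
\[
F'(u) \;=\; -\frac{1}{6\pi}\int_M \sigma\, K_u\, dA_u \;+\; \frac{d}{du}\log A_u,
\]
where $K_u$, $dA_u$, $A_u$ denote the Gauss curvature, area form, and area for $g_u$.

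To derive it, I would start from the two-dimensional Weyl rule $\Delta_u:=\Delta_{g_u}=e^{-2u\sigma}\Delta_g$, so that $\dot\Delta_u=-2\sigma\Delta_u$. By Duhamel's formula combined with cyclicity of the trace on $L^2(M,dA_u)$,
\[
\frac{d}{du}\Tr(e^{t\Delta_u}) \;=\; t\,\Tr(\dot\Delta_u\, e^{t\Delta_u}) \;=\; -2t\,\frac{d}{dt}\Tr(\sigma\, e^{t\Delta_u}).
\]
Inserting this into $\zeta_u(s)=\Gamma(s)^{-1}\int_0^\infty t^{s-1}(\Tr(e^{t\Delta_u})-h^0)\,dt$ and integrating by parts in $t$ (first for $\Re s$ large, then extending meromorphically) rewrites $\dot\zeta_u(s)$ as a Mellin transform of $\Tr(\sigma\, e^{t\Delta_u})$. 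The analogue of \eqref{eq:pleijel} with multiplier $\sigma$,
\[
\Tr(\sigma\, e^{t\Delta_u}) \;=\; \frac{1}{4\pi t}\int_M \sigma\, dA_u \;+\; \frac{1}{12\pi}\int_M \sigma K_u\, dA_u \;+\; O(t),
\]
controls the residues at $s=0$ and yields the $-\tfrac{1}{6\pi}\int\sigma K_u\,dA_u$ term of $F'(u)$. The large-$t$ contribution, where $\Tr(\sigma e^{t\Delta_u})$ approaches the projection $\frac{1}{A_u}\int_M\sigma\,dA_u$ onto the constants, is exactly what produces the area correction, via $\frac{d}{du}A_u=2\int_M\sigma\,dA_u$.

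To finish, I would substitute the two-dimensional conformal transformation rules $dA_u=e^{2u\sigma}dA$ and $K_u=e^{-2u\sigma}(K-u\Delta\sigma)$, so that $K_u\,dA_u=(K-u\Delta\sigma)\,dA$. The $u$-integral is elementary:
\[
\int_0^1\!\!\int_M \sigma(K-u\Delta\sigma)\,dA\,du \;=\; \int_M \sigma K\,dA \;-\; \tfrac12\int_M \sigma\Delta\sigma\,dA,
\]
and integration by parts $\int_M \sigma\Delta\sigma\,dA=-\int_M|\nabla\sigma|^2 dA$ turns the right-hand side into $\int_M\sigma K\,dA+\tfrac12\int_M|\nabla\sigma|^2 dA$, reproducing the claimed formula (including the $\log A'-\log A$ arising by telescoping $\int_0^1\tfrac{d}{du}\log A_u\,du$).

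The main obstacle is the careful $\zeta$-regularization bookkeeping in the first step: interchanging $d/du$ with the meromorphic continuation in $s$, justifying integration by parts in $t$ at both ends (in particular separating off the zero mode so that the $t\to\infty$ boundary term is harmless), and verifying that this zero-mode contribution produces precisely the $\log A'-\log A$ correction with the stated constant. All remaining manipulations are algebraic identities from the Weyl rule, the heat kernel expansion \eqref{eq:pleijel}, and integration by parts.
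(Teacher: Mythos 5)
Your proof is correct, and it is a genuinely different route from what the paper does, though for a somewhat unusual reason: the paper does not actually prove Theorem~\ref{PRS} at all (it cites Polyakov \cite{Pol_bosonic}), but it does prove the analogous boundary version, Theorem~\ref{Thm:Alvarez}, by a loop-measure argument. The comparison is nonetheless instructive. The paper's proof of Theorem~\ref{Thm:Alvarez} starts from the identity $\zeta_g(s)=\Gamma(s)^{-1}\int T_g(\gamma)^s d\nu(\gamma)$ of \eqref{eq:zetaloop} (valid only when $h^0=0$), exploits the conformal invariance of the loop measure $\nu$ together with the Weyl scaling of the lifetime $T_g$, and uses the rerooting identity for rooted loops to land directly on $\frac{2s}{\Gamma(s)}\int_M\sigma(x)\,dA(x)\int_0^\infty t^{s-1}p_t(x,x)\,dt$. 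Your argument reaches the same Mellin transform via Duhamel's formula and cyclicity of the trace; the two are really the same heat-kernel computation dressed in different languages. What your approach buys, and what the paper's proof of Theorem~\ref{Thm:Alvarez} deliberately sidesteps by working with a Dirichlet boundary component, is the zero mode: the formula \eqref{eq:zetaloop} and the rerooting trick assume $h^0=0$, whereas on a closed surface $h^0=1$ and one must peel off the projection onto constants from $\Tr(\sigma e^{t\Delta_u})$ before the integration by parts in $t$ is legitimate. Your observation that the large-$t$ tail $\Tr(\sigma e^{t\Delta_u})\to A_u^{-1}\int\sigma\,dA_u$ produces exactly $\frac{d}{du}\log A_u$ is the necessary supplement for the closed case and explains the $\log A'-\log A$ term, which is absent in Theorem~\ref{Thm:Alvarez}. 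One small point worth making explicit in your first step: since $\Delta_u$ is self-adjoint on $L^2(M,dA_u)$, a varying Hilbert space, the cleanest way to apply Duhamel with a cyclic trace is to conjugate to $e^{-u\sigma}\Delta_g e^{-u\sigma}$ on the fixed space $L^2(M,dA)$ (this has the same spectrum and the same value of $\Tr(\sigma e^{t\,\cdot\,})$ because $\sigma$ commutes with $e^{\pm u\sigma}$); the identity $\frac{d}{du}\Tr(e^{t\Delta_u})=-2t\frac{d}{dt}\Tr(\sigma e^{t\Delta_u})$ then follows as you claim.
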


In the presence of a boundary, a similar formula was obtained by Alvarez (\cite{Alv_boundary}):

\begin{Thm}\label{Thm:Alvarez}
Let $g,g'=e^{2\sigma}g$ be two conformally equivalent metrics on $M$; we consider Dirichlet boundary conditions on $\partial M$. Then:
\begin{equation}\label{Alv}
\log{\det}(-\Lap_{g'})-\log{\det}(-\Lap_{g})=-\frac 1{6\pi}\left(\frac 12\int_M |\nabla\sigma|^2dA+\int_MK\sigma dA+\int_{\partial M}k\sigma ds\right)-\frac 1{4\pi}\int_{\partial M}\partial_n\sigma ds\end{equation}
where $dA,\nabla,K,ds,k$ are the volume element, gradient, scalar curvature, boundary length element, geodesic curvature associated with $g$; $\partial_n$ is the outer normal derivative.
\end{Thm}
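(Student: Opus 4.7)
The strategy is to interpolate along the family $g_t = e^{2t\sigma}g$, $t\in[0,1]$, and compute $\frac{d}{dt}\log{\det}(-\Lap_{g_t})$ via the $\zeta$-function, then integrate. In two dimensions, $-\Lap_{g_t} = e^{-2t\sigma}(-\Lap_g)$ is self-adjoint on $L^2(M, dA_{g_t})$; first-order perturbation theory applied to the $L^2(dA_{g_t})$-normalized eigenfunctions $\phi_n^{(t)}$ with eigenvalues $\lambda_n(t)$ yields $\dot\lambda_n(t) = -2\lambda_n(t)\int_M \sigma |\phi_n^{(t)}|^2 dA_{g_t}$. Summing over $n$, for $\Re s$ large $\frac{d}{dt}\zeta_t(s) = 2s\tilde\zeta_t(s)$, with $\tilde\zeta_t(s) = \Tr_{L^2(dA_{g_t})}(\sigma(-\Lap_{g_t})^{-s})$. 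By analytic continuation, using the simple zero of $1/\Gamma(s)$ at $s=0$,
$$\frac{d}{dt}\log{\det}(-\Lap_{g_t}) = -\frac{d}{dt}\zeta_t'(0) = -2\tilde\zeta_t(0).$$

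Next, $\tilde\zeta_t(0)$ is the constant coefficient in the small-$\tau$ expansion of the insertion trace $\Tr(\sigma e^{\tau\Lap_{g_t}}) = \int_M \sigma(x) p_\tau^{g_t}(x,x)\,dA_{g_t}$. The bulk Pleijel--Minakshisundaram expansion \eqref{eq:pleijel} contributes $\frac{1}{12\pi}\int_M K_t\sigma\,dA_t$ at order $\tau^0$. Near the Dirichlet boundary one uses the method of images, as in \cite{McKean_Singer}: in a tubular neighborhood, $p_\tau(x,x) \sim \frac{1}{4\pi\tau}\bigl(1 - e^{-d(x,\partial M)^2/\tau}\bigr)$ up to curvature corrections. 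Taylor-expanding $\sigma$ in the inward normal coordinate $d$ and evaluating the Gaussian integrals $\int_0^\infty d^k (4\pi\tau)^{-1}e^{-d^2/\tau}\,dd$ produces the divergent piece $-\frac{1}{8\sqrt{\pi\tau}}\int_{\partial M}\sigma\,ds$ as well as a constant contribution $+\frac{1}{8\pi}\int_{\partial M}\partial_n\sigma\,ds$ --- the sign reflecting that the outward normal $\partial_n$ is opposite to the interior-pointing $d$. Curvature corrections yield $\frac{1}{12\pi}\int_{\partial M}k_t\sigma\,ds_t$. Altogether,
$$\tilde\zeta_t(0) = \frac{1}{12\pi}\int_M K_t\sigma\,dA_t + \frac{1}{12\pi}\int_{\partial M}k_t\sigma\,ds_t + \frac{1}{8\pi}\int_{\partial M}\partial_n^{(t)}\sigma\,ds_t.$$

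Finally, apply the conformal rules $K_t dA_t = (K - t\Lap\sigma)dA$, $k_t ds_t = (k + t\partial_n\sigma)ds$, $\partial_n^{(t)}\sigma\,ds_t = \partial_n\sigma\,ds$, and integrate $-2\tilde\zeta_t(0)$ over $t\in[0,1]$. Green's identity $\int_M \sigma\Lap\sigma\,dA = \int_{\partial M}\sigma\partial_n\sigma\,ds - \int_M |\nabla\sigma|^2\,dA$ trades the resulting $\int_M \sigma\Lap\sigma$ term (coming from the $-t\Lap\sigma$ factor integrated against $\sigma$) for $|\nabla\sigma|^2$ plus a boundary piece; the two $\frac{1}{12\pi}\int_{\partial M}\sigma\partial_n\sigma\,ds$ contributions --- from Green's identity and from the $t\partial_n\sigma$ factor in the expansion of $k_t ds_t$ --- cancel exactly, yielding \eqref{Alv}. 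The hardest step is the heat-kernel expansion near the Dirichlet boundary, specifically the sign and coefficient of the $\partial_n\sigma$ term, since this is what is responsible for the universal factor $-\frac{1}{4\pi}$ in \eqref{Alv} (switching to Neumann would flip the sign of the image contribution). The remaining manipulations --- eigenvalue perturbation, conformal transformation of $K,k,dA,ds,\partial_n$, and Green's identity --- are essentially algebraic.
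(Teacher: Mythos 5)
Your proof is correct and takes a genuinely different route from the paper's. The paper deliberately restricts to the case where $\sigma$ vanishes in a neighborhood of $\partial M$ (explicitly noting this suffices for its later purposes) and then derives the formula from the Brownian loop representation of the $\zeta$-function: the variation of $\zeta'(0)$ under Weyl scaling is computed by rerooting loops, reducing everything to the bulk Pleijel--Minakshisundaram coefficient $K/12\pi$, with no boundary heat-kernel analysis needed since $\sigma$ is supported away from $\partial M$. You instead interpolate along $g_t=e^{2t\sigma}g$, compute $\dot\lambda_n$ by Hellmann--Feynman (correctly handling the $t$-dependent inner product), identify $\tilde\zeta_t(0)$ as the $\tau^0$ coefficient in $\Tr(\sigma e^{\tau\Lap_{g_t}})$, and carry out the full boundary heat-kernel expansion via the method of images. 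What your approach buys is the complete statement of Theorem \ref{Thm:Alvarez}, boundary terms included: you correctly derive the coefficient $+\frac{1}{8\pi}\int_{\partial M}\partial_n\sigma\,ds$ from the Gaussian moment $\int_0^\infty d\,(4\pi\tau)^{-1}e^{-d^2/\tau}\,dd = 1/(8\pi)$ and the sign flip $\partial_d=-\partial_n$, which is precisely the source of the universal $-\frac{1}{4\pi}$ in \eqref{Alv}; the cancellation of the two $\frac{1}{12\pi}\int_{\partial M}\sigma\partial_n\sigma\,ds$ contributions (one from Green's identity, one from the conformal transformation $k_t\,ds_t=(k+t\partial_n\sigma)\,ds$) is also verified. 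The one step you invoke rather than derive is the $\frac{1}{12\pi}\int_{\partial M}k\sigma\,ds$ boundary-curvature coefficient in the constant term of $\Tr(\sigma e^{\tau\Lap})$; a direct derivation requires tracking both the metric Jacobian $(1-kd+O(d^2))$ in Fermi coordinates and the curvature corrections to the half-plane parametrix, not merely the Jacobian (which alone would give $\frac{1}{8\pi}$). That coefficient is standard (McKean--Singer, Alvarez) and your overall bookkeeping is self-consistent with it, but it deserves an explicit citation or computation.
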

\begin{proof}
We give an argument based on the loop representation. 
For simplicity, we restrict ourselves to the case where $\sigma=0$ on a neighborhood of $\partial M$ (which will be sufficient for our purposes later on). Consider the $\zeta$-function \eqref{eq:zetaloop}
$$\zeta_g(s)=\frac 1{\Gamma(s)}\int T_g(\gamma)^sd\nu(\gamma)$$
where $\nu$ is the (conformally invariant) loop measure (valid if $\Re s>1$). We want to compute the first order variation of $\zeta'_g(0)$ under $g\rightarrow g_\eps=e^{2\eps\sigma}g$. Then $T_{\eps}(\gamma)=\int_0^Te^{2\eps\sigma}(\gamma_t)dt$, so:
$$\partial_\eps\zeta_\eps(s)_{|\eps=0}=\frac 1{\Gamma(s)}\int T(\gamma)^s \left(s\int_0^T 2\sigma(\gamma_t)\frac{dt}T\right)d\nu(\gamma).$$
In terms of the measure on rooted loops $\nu_r$, one has the rerooting identity:
$$\partial_\eps\zeta_\eps(s)_{|\eps=0}=\frac {2s}{\Gamma(s)}\int T(\gamma)^s\sigma(\gamma_0)d\nu_r(\gamma)=\frac {2}{\Gamma(s)}\int_M\sigma(x)dA(x)\int_0^\infty st^{s-1}p_t(x,x)dt.$$
This identity is {\em a priori} satisfied for $\Re s>1$. The right-hand side is well-defined in a neighborhood of $s=0$. From the construction of the analytic continuation of $\zeta_\eps$ (by substracting a rational counterterm), it follows that the identity stays valid in a neighborhood of $s=0$. It is now a matter of heat kernel asymptotics. Uniformly in $x$ away from the boundary, we have:
$$\int_0^\infty st^{s-1}p_t(x,x)dt=\int_0^1st^{s-1}\left(\frac 1{4\pi t}+\frac{K(x)}{12\pi}\right)ds+O(s)=\frac{K(x)}{12\pi}+O(s).$$
Applying this to $g_\eps=e^{2\eps\sigma}g$, this proves:
$$\frac{d}{d\eps}\zeta'_{\eps}(0)=\frac 1{6\pi}\int_M \sigma K_\eps dA_\eps=\frac 1{6\pi}\int_M(-\sigma\Lap\sigma+K\sigma)dA$$
since $dA_\eps=e^{2\eps\sigma}dA$, $K_\eps=(K-\Lap\sigma)e^{-2\eps\sigma}$.
By integration, this gives the formula, in the case $\sigma=0$ near the boundary. 
\end{proof}

Note that the proof given here stays valid for arbitrary boundary conditions, as long as $\sigma=0$ in a neighborhood of the boundary.

A fundamental feature of these formulae is that the logarithmic variation of the determinant
is local, in the sense that it is an integral of local quantities in the interior and on the boundary. Let us give a direct argument for this locality property.

Consider two compact surfaces $(M_1,g_1)$, $(M_2,g_2)$ that are identified (together with their metrics) in a disk $D_0$ (also assuming for simplicity that $M_1,M_2$ have a Dirichlet boundary component). One changes $g_1$ to $g'_1$, $g_2$ to $g'_2$ conformally, with the conditions: $g_i=g'_i$ outside $D\subset\subset D_0$; $g_1=g_2$ and $g'_1=g'_2$ in $D_0$. Then:
$$\Gamma(s)((\zeta_{g'_1}-\zeta_{g_1})-(\zeta_{g'_2}-\zeta_{g_2}))(s)=\int (T_{g'_1}^s-T_{g_1}^s)d\nu_1-\int (T_{g'_2}^s-T_{g_2}^s)d\nu_2.$$
Obviously loops contained in $M_1\setminus D$, or $M_2\setminus D$, or in $D_0$ do not contribute to the RHS (for any $s$). So one can restrict the integrals on the RHS to loops crossing the annulus $D_0\setminus D$. The mass of those loops is finite (say by the large deviation estimate). So the RHS vanishes at $s=0$, meaning that the logarithmic variation of determinants is the same for the two surfaces. The argument also works for a variation of the metric in a semidisk neighborhood of a boundary point.

\section{Determinant bundle and Virasoro representations}

We have defined a representation of the Witt algebra on smooth functions on appropriate Teichm\"uller spaces (or rather projective limits of these, see \eqref{eq:lndef}). Here we discuss the extension of this formalism to the Virasoro algebra, essentially following \cite{KontSuh,Fri_CFTSLE} - this requires considering sections of the determinant line bundle rather than smooth functions. 

The (real) Virasoro algebra is a central extension of the Witt algebra, with basis $\{(L_n)_{n\in\Z},{\bf c}\}$ and bracket given by
\begin{align*}
[L_m,L_n]&=(m-n)L_{m+n}+\frac{m(m^2-1)}{12}\delta_{n,-m}{\bf c}\\
[L_n,{\bf c}]&=0
\end{align*}
for all $m,n\in\Z$.

We consider $(\Sigma_0,X_0,\dots)$ a smooth surface with a marked boundary point and a Teichm\"uller marking (and any number of additional boundary and bulk points, and jets); let ${\mc T}$ denote the Teichm\"uller space of bordered Riemann surfaces smoothly equivalent to it. 

Let us consider Riemannian metrics $g$ on $(\Sigma_0)$ which are flat near boundary components and s.t. the boundary components are geodesic, i.e. the metric is modelled on that of a flat cylinder near boundary components. This allows to omit the Alvarez corrections to the Polyakov anomaly formula (Theorem \ref{Thm:Alvarez}); given a bordered surface, it is always possible to construct such a metric (using e.g. standard partition of unity arguments).

We consider functions $f$ defined on this space of metrics s.t.
\begin{equation}\label{eq:confanom}
f(e^{2\sigma}g)=f(g)\exp\left(\frac 1{12\pi}\left(\frac 12\int_{\Sigma}|\nabla \sigma|^2dA+\int_\Sigma K\sigma dA\right)\right)
\end{equation}
Any such function may be written as $f(g)=\det_\zeta(-\Lap_g)^{-1/2}h([(\Sigma_0,g)])$ where $[(\Sigma_0,g)]$ is the point in ${\mc T}$ corresponding to the Riemannian surface $(\Sigma_0,g)$ and $h$ is a function on ${\mc T}$. Consequently, we may identify the functions $f$ as sections of a line bundle ${\mc L}$ over ${\mc T}$, a reference section being given by:
$$s_\zeta(g)={\det}_\zeta(-\Lap_g)^{-1/2}$$
which is nowhere vanishing by definition, so that ${\mc L}$ is a trivial line bundle. We declare $s_\zeta$ to be smooth, so that ${\mc L}$ is a smooth line bundle. More intrinsically, a smooth section of ${\mc L}$ can be identified with a smooth functional $F$ of the metric satisfying the required conformal anomaly formula \eqref{eq:confanom} (in the sense that if $(g_t)_t$ is a smooth family of metrics, $t\mapsto F(\Sigma_0,g_t)$ is smooth).

If $c\in\R$, we may define ${\mc L}^{\otimes c}$ as the line bundle with a smooth nonvanishing section $s_\zeta^c$. We want to define a natural representation of the Virasoro algebra acting on sections of the pullback ${\mc L}_\infty$ of ${\mc L}$ to ${\mc T}_{1^\infty}$. By definition, a smooth section of ${\mc L}_\infty^{\otimes c}$ over $U_\infty$ is of type $hs_\zeta^c$, with $h\in C^\infty(U_\infty)$; the space of these smooth sections is denoted by $C^\infty(U_\infty,{\mc L}_\infty^{\otimes c})$. Heuristically, we would like to define a connection $\nabla$ on ${\mc L}_\infty$ such that its curvature form gives the Virasoro cocycle,
ie:
$$``[\nabla_{\ell_m},\nabla_{\ell_n}]=\nabla_{[\ell_m,\ell_n]}+\frac c{12}m(m^2-1)\delta_{m,-n}"$$

\subsection{Virasoro generators}\label{ssec:Virgen}

As earlier (see \eqref{eq:lndef}) we consider a marked point $X$ on the boundary of the surface $\Sigma$ and a local coordinate $z$ at $X$, which identifies a neighborhood of $X$ in $\Sigma$ with a semidisk $D^+(0,r)\subset\H$; ${\mc T}_{1^k}$ denotes the Teichm\"uller space of surfaces with a marked $k$-jet at $X$, all other markings being fixed; and ${\mc L}_k$ denotes the pullback of ${\mc L}$ to ${\mc T}_{1^k}$.

Let us consider a local section of ${\mc L}_k$ over $U_k$, an open subset in ${\mc T}_{1^k}$; it may be identified with a function $f$ on the space of Riemannian metrics. Let $h_t$ be the flow in ${\H}$ generated by the vector field $-z^{n+1}\partial_z$. Then consider $(\Sigma_t,X,\dots)$ the surface obtained by twisting the gluing of $z^{-1}(D^+(0,r))$ and $\Sigma\setminus z^{-1}(D^+(0,r/2))$ by $h_t$. We also consider ${\mb H}_t$ the deformation of $({\mb H},0)$ by the same twist, i.e. changing the gluing of $D^+(0,r)$ and ${\mb H}\setminus D^+(0,r/2)$); as a Riemann surface this is of course equivalent to $({\mb H},0)$. Then the pairs of Riemannian manifolds $(\Sigma_t,\H_t)$ and $(\Sigma_0,\H_0)$ are naturally identified near the marked point; and the pairs $(\Sigma_t,\Sigma_0)$ and $(\H_t,\H_0)$ are identified away from the marked point.

Choose a metric $g_t$ on $\Sigma_t$ which is flat near the geodesic boundary and is constant in $t$ in $\Sigma\setminus z^{-1}(D(0,r))$. Similarly, we choose a metric $\tilde g_t$ on ${\mb H}_t$ with the same conditions. Finally we assume that $(g_t)$ and $(\tilde g_t)$ agree in the semidisk around $0$ where they are identified via the local coordinate. Because of the locality of the Liouville action \eqref{eq:confanom}, the ratio
$$\frac{f(\Sigma_t,g_t)s_\zeta(\H_0,\tilde g_0)}{f(\Sigma_0,g_0)s_\zeta(\H_t,\tilde g_t)}$$
is independent of choices of metrics (this is the key ``neutral collection" argument of \cite{KontSuh}). We set:
\begin{equation}\label{eq:Lndef}
(L_n f)(\Sigma_0,g_0)=\lim_{t\rightarrow 0}\frac{f(\Sigma_t,g_t)s_\zeta(\H_0,\tilde g_0)}{s_\zeta(\H_t,\tilde g_t)}
\end{equation}
If $h\in C^\infty(U_k)$, by \eqref{eq:lndef} we have trivially the Leibniz rule: 
$$L_n(hf)=h(L_nf)+(\ell_nh)f$$
whenever $L_nf$ exists.  We want to show that the limit \eqref{eq:Lndef} is well-defined; depends on the choice of local coordinate $z$ only through a $k'$-jet; and is smooth. By the previous remark, it is enough to show that $L_n s_\zeta/s_\zeta$ is well-defined and smooth on $U_{k'}$ for some $k'\geq k$. 

The evaluation of $L_n s_\zeta/s_\zeta$ is a very concrete problem on variation of complex structures (smoothness is then immediate). It is also unfortunately rather lengthy and involved, and thus postponed to Appendix \ref{Sec:varform}. Recall the Schwarzian connection $S=S_\Sigma$ from \eqref{Sconn}; it depends on the bordered surface $\Sigma$, the marked point $X$, and a $3$-jet of local coordinate at $X$. We may phrase:

\begin{Thm}\label{Thm:Virrep}
Let $L_n$ be defined by \eqref{eq:Lndef}. Then $L_n$ maps $C^\infty(U_k,{\mc L}^{\otimes c}_k)$ to $C^\infty(U_{k+n^{-}+1},{\mc L}^{\otimes c}_{k+n^-+1})$. We have the expressions:
\begin{align*}
L_n(fs)&=(\ell_n f)s&\hphantom{bbbbbbbbbb}&n\geq -1\\
L_n(fs)&=(\ell_n f)s+c\frac{(\ell_{-1})^{-n-2}S_\Sigma}{12(-n-2)!}fs&\hphantom{bbbbbbbbbb}&n\leq -2
\end{align*}
with $f\in C^\infty(U_\infty)$ and $s=(s_\zeta)^{\otimes c}$ the reference section. 
As operators on $C^\infty(U_\infty,{\mc L}_\infty^{\otimes c})$, the $L_n$'s satisfy the Virasoro commutation relations:
$$L_nL_m-L_mL_n=(n-m)L_{m+n}+\frac c{12}n(n^2-1)\delta_{m,-n}$$
\end{Thm}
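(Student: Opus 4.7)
The plan is to reduce all three assertions of the theorem to a single variational computation — the evaluation of $L_n s_\zeta^{\otimes c}$ — which is the content of the deferred Appendix \ref{Sec:varform}. First I would unwind the definition \eqref{eq:Lndef} to establish the Leibniz-type rule: for $h \in C^\infty(U_\infty)$ and the reference section $s = s_\zeta^{\otimes c}$, expanding the defining ratio to first order in $t$ yields
$$L_n(hs) = (\ell_n h) s + h \cdot L_n s,$$
where $\ell_n h$ comes from the Kodaira--Spencer deformation of $h$ (Section \ref{Sec:Wittrep}) and $L_n s$ is the anomaly produced by the normalizing denominator $s_\zeta(\H_t,\tilde g_t)$. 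This reduces the first two claims to evaluating $\alpha_n := L_n s / s$ as a function on $U_\infty$.

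The main analytic input — the content of Appendix \ref{Sec:varform} — is the identity
$$\alpha_n = \begin{cases} 0 & n \geq -1, \\ \dfrac{c}{12(-n-2)!}(\ell_{-1})^{-n-2} S_\Sigma & n \leq -2, \end{cases}$$
obtained by applying the Polyakov--Alvarez formula (Theorem \ref{Thm:Alvarez}) simultaneously to $(\Sigma_t,g_t)$ and $(\H_t,\tilde g_t)$ and exploiting the locality of the Liouville action (end of Section 3) to cancel all contributions outside the semi-annulus where the gluing is twisted. For $n\geq -1$ the flow $h_t$ extends analytically across the gluing disk, so no first-order anomaly survives; for $n\leq -2$ the surviving coefficient is identified with the Schwarzian connection $S_\Sigma$ (and its iterated $\ell_{-1}$-derivatives for $n\leq -3$) by matching against the expansion \eqref{Sconn} of $\tilde\eta_p$. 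Combining with the Leibniz rule yields the claimed formulas; smoothness and the mapping $C^\infty(U_k,{\mc L}_k^{\otimes c})\to C^\infty(U_{k+n^-+1},{\mc L}_{k+n^-+1}^{\otimes c})$ then follow from tracking jet orders ($\ell_n$ shifts jet requirement by $n^-$, $S_\Sigma$ is smooth on ${\mc T}_{1^3}$, and iterated $\ell_{-1}$'s absorb into the additional $+1$).

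For the Virasoro commutation relations, iterating the Leibniz rule and the Witt commutator $[\ell_m,\ell_n]=(m-n)\ell_{m+n}$ gives
$$[L_m,L_n](hs) = \bigl((m-n)(\ell_{m+n} h) + h (\ell_m \alpha_n - \ell_n \alpha_m)\bigr) s.$$
Comparison with $(m-n)L_{m+n}(hs)+\frac{c}{12}m(m^2-1)\delta_{m+n,0}\, hs$ reduces the relation to the scalar identity
$$\ell_m \alpha_n - \ell_n \alpha_m = (m-n)\alpha_{m+n} + \tfrac{c}{12} m(m^2-1)\delta_{m+n,0}.$$
Using $[\ell_m,\ell_{-1}]=(m+1)\ell_{m-1}$ to commute $\ell_m$ past the iterated $\ell_{-1}$'s, this reduces in turn to evaluating $\ell_p S_\Sigma$ for $p\geq 0$. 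The Schwarzian transformation law \eqref{eq:Sconndef} applied to $z\mapsto z-\eps z^{p+1}$ gives $\ell_0 S_\Sigma = 2S_\Sigma$, $\ell_2 S_\Sigma = 6$ (a universal constant, coming from $\{z-\eps z^3;z\}|_0=-6\eps$), and $\ell_p S_\Sigma = 0$ otherwise. Propagating through the Witt combinatorics gives $\ell_m(\ell_{-1})^{m-2}S_\Sigma = m(m^2-1)(m-2)!$ for $m\geq 2$, which produces exactly the central term $\frac{c}{12}m(m^2-1)$ when $m+n=0$, and a compatible recursion between $\alpha_m,\alpha_n,\alpha_{m+n}$ in all other cases.

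The principal difficulty is the variational computation in Appendix \ref{Sec:varform}: the response of $\zeta$-regularized determinants to a Kodaira--Spencer deformation at a marked boundary point requires heat-kernel or loop-measure analysis (Section 3) combined with the neutral-collection cancellation against the reference half-plane $\H$. Once this local variational formula is in hand, the remaining Virasoro check, while algebraically intricate, is essentially forced by the Schwarzian transformation of $S_\Sigma$ and the geometric naturality of the construction.
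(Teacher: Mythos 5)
Your reduction of the Virasoro relations to the scalar cocycle identity
$$\ell_m\alpha_n-\ell_n\alpha_m=(m-n)\alpha_{m+n}+\tfrac{c}{12}m(m^2-1)\delta_{m+n,0}, \qquad \alpha_n := L_n s/s,$$
via the Leibniz rule is correct, and matches the first step of the paper's argument. The partial checks you do (the central term when $m+n=0$, $m\geq 2$; the identification $\ell_0 S=2S$, $\ell_1S=0$, $\ell_2S=6$) are also right. However, the strategy for completing this verification has a genuine gap: you claim that commuting $\ell_m$ past the $\ell_{-1}$'s ``reduces in turn to evaluating $\ell_p S_\Sigma$ for $p\geq 0$'', but the bracket $[\ell_m,\ell_{-1}]=(m+1)\ell_{m-1}$ only \emph{lowers} the index. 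For the case $m,n\leq -2$ (both $\alpha_m,\alpha_n$ nonzero) the expression $\ell_m\ell_{-1}^{-n-2}S_\Sigma$ produces only terms $\ell_{-1}^j\ell_{m'}S_\Sigma$ with $m'\leq m\leq -2$, which are not computed by the transformation law \eqref{eq:Sconndef} and do not simplify. The phrase ``a compatible recursion between $\alpha_m,\alpha_n,\alpha_{m+n}$ in all other cases'' is asserting precisely the hard part without proving it.

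The paper avoids this case analysis entirely by a short algebraic detour. One checks the commutation relations only for $-2\leq m,n\leq 2$ with $-2\leq m+n\leq 2$; in that range the only nonzero $\alpha$ is $\alpha_{-2}=\tfrac{c}{12}S_\Sigma$, and it is only ever hit by $\ell_p$ with $p\in\{0,1,2\}$, so the three evaluations $\ell_0 S=2S$, $\ell_1 S=0$, $\ell_2 S=6$ really do suffice. One then shows that the $L_n$'s are generated by $L_{-2},\dots,L_2$ (using the two immediate identities $[L_n,L_1]=(n-1)L_{n+1}$ for $n\geq 0$ and $[L_{-1},L_{-n}]=(n-1)L_{-n-1}$ for $n\geq 0$, both easy because they never involve $\ell_pS$ with $p<0$), and invokes the fact that ${\mc U}(\Vir)$ has a presentation with generators $(L_n)_{-2\leq n\leq 2},{\bf c}$ and exactly those finitely many relations. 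This transfers all remaining commutation relations for free, bypassing the combinatorics you were attempting. If you want to keep your direct route, you would need to either prove the full cocycle identity by a separate induction or invoke a presentation-type argument yourself — as written the verification is incomplete. (A minor secondary point: your description of Appendix \ref{Sec:varform} as ``applying the Polyakov--Alvarez formula simultaneously'' misstates its method — the Polyakov anomaly only controls Weyl rescalings within a conformal class, whereas the appendix handles the transverse Kodaira--Spencer deformation via the analytic surgery formula and variation of the Dirichlet-to-Neumann operator — but since that computation is explicitly deferred, this does not affect the logical status of the present theorem.)
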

We need a $(k+n^-)-$ jet to evaluate $\ell_nf$ and a $(3+(n+2)^-)$-jet to evaluate $\ell_{-1}^{-n-2}S_\Sigma$.
\begin{proof}
The expression for $L_n$, $n\leq -2$ is the content of Appendix \ref{Sec:varform}. 
We thus simply have to check the commutation relations. Saying that $S\in C^\infty(U_3)$ is a Schwarzian connection \eqref{eq:Sconndef} is equivalent to: $\ell_0S=2S$, $\ell_1S=0$, $\ell_2S=6$. This implies directly that the commutation relations are satisfied if $-2\leq m,n\leq 2$ and $-2\leq m+n\leq 2$. Moreover it is immediate to check that for $n\geq 0$
\begin{align*}
L_nL_1-L_1L_n&=(n-1)L_{n+1}\\
L_{-1}L_{-n}-L_{-n}L_{-1}&=(n-1)L_{-n-1}
\end{align*}
The subalgebra of $\End(C^\infty(U_\infty,{\mc L}^{\otimes c}_\infty))$ generated by the $(L_n)_{n\in\Z}$ is generated by $(L_n)_{-2\leq n\leq 2}$; the commutation relations on this generators are identical to the Virasoro algebra relations. The universal enveloping algebra ${\mc U}(\Vir)$ of the Virasoro algebra has a presentation (as an algebra over $\C$) with generators $(L_n)_{-2\leq n\leq 2},{\bf c}$ (${\bf c}$ a central element) and relations given by the commutation relations for $-2\leq m,n\leq 2$ and $-2\leq m+n\leq 2$. Consequently we have defined a representation of ${\mc U}(\Vir)$ in $\End(C^\infty(U_\infty,{\mc L}^{\otimes c}_\infty))$ (${\bf c}$ operates by multiplication by $c$), and 
the commutation relations are satisfied for all $m,n$.
\end{proof}
More generally, if $V$ is a vector bundle over $U$, $V_k$ its pullback to $U_k$, we can define: $$L_n: C^\infty(U_k,V_k\otimes {\mc L}^{\otimes c}_k)\rightarrow C^\infty(U_{k+n^{-}+1},V_{k+n^-+1}\otimes{\mc L}^{\otimes c}_{k+n^-+1})$$ by
$L_n(v\otimes s)=(\ell_n v)\otimes s+v\otimes (L_ns)$. The same commutation relations are obviously satisfied. 

In particular, if $h$ in $\R$, one may consider $V=|T^{-1}\Sigma|^{\otimes h}$, the (norm of the) cotangent bundle (relative to the marked point $X$) raised to the power $h$. 
Explicitly, sections of this bundle can be identified with functions $f$ of $(\Sigma,X,\dots)$ and a local coordinate $z$ at $X$ ($z$ maps neighborhood of $X$ to a neighborhhood of $0$ in $\C$ or $\H$ depending on whether $X$ is a bulk or boundary point) with the transformation property
$$f(\Sigma,X,z')=\left|\frac {dz'}{dz}\right|^{-h}f(\Sigma,X,z)$$
In the case where $X$ is a boundary point, this may be identified in turn with an element $f\in C^\infty(U_1)$ satisfying $\ell_0f=hf$; trivially $\ell_n f=0$ if $n>0$.

If $\phi\in C^\infty(U_1,{\mc L}_1^{\otimes c})$ with $\ell_0\phi=h\phi$ (equivalently, $\phi$ a section of $|T^{-1}\Sigma|^{\otimes h}\otimes{\mc L}^{\otimes c}$), one may consider the ${\mc U}(\Vir)$-module generated by $\phi$: ${\mc U}(\Vir)\phi$. This is the {\em highest-weight} Virasoro module generated by the highest-weight vector $\phi$; it has central charge $c$ and highest-weight $h$.

\subsection{Canonical differential equations and Virasoro singular vectors}

We now discuss Virasoro singular vectors and the (genuine, finite dimensional) differential operators associated to them, along the lines of \cite{FriKal,Fri_CFTSLE,Kont_arbeit,KontSuh}.

An element of ${\mc U}(\Vir)$ maps to an operator $C^\infty(U_k,V_k\otimes {\mc L}_k^{\otimes c})\rightarrow
C^\infty(U_{k'},V_{k'}\otimes {\mc L}_k^{\otimes c})
$ for $k'$ large enough. Given a choice of section $\sigma$ of the natural covering map $U_{k'}\rightarrow U_k$, one obtains a differential operator
on $C^\infty(U_k\otimes {\mc L}_k^{\otimes c})$. In general, this operator is non-canonical, as it depends on the choice of section $\sigma$. This dependence disappears (up to a multiplicative factor) for special elements of ${\mc U}(\Vir)$, the {\em singular vectors}, which appear in the study of degenerate Verma modules.

In ${\mc U}(\Vir)$, there is a natural grading given by $\deg(L_m)=m$, $\deg({\bf c})=0$. If $\phi$ is a highest-weight element with weight $h$, $\Delta\in{\mc U}(\Vir)$ an homogeneous element of degree $d$, then $L_0\Delta\phi=(h-d)\Delta\phi$. Let $\Vir^-$ (resp. $\Vir^+$) be the subalgebra of $\Vir$ spanned by negative (resp. positive) degree elements.

In ${\mc U}(\Vir)$, a {\em $(c,h)$-singular vector} is an element $\Delta\in{\mc U}(\Vir^-)$ such that for all $m>0$, $L_m\Delta$ is in the left ideal ${\mc U}(\Vir)(L_0-h,{\bf c}-c,L_1,\dots,L_n,\dots)$. Let $\Delta$ be a non-trivial homogeneous element of degree $-n$ ($n>0$) and $P$ any element of degree $n$; then $P\Delta$ is an element of degree $0$. It may be written uniquely as a polynomial in $L_0$ and ${\bf c}$ modulo the left ideal ${\mc U}(\Vir)\Vir^+$. If for any $P$, this polynomial vanishes upon evaluation at $L_0=h$, ${\bf c}=c$, then $\Delta$ is a $(c,h)$-singular vector. The {\em Kac determinant formula} states in particular that if $c=13-6(\tau+\tau^{-1})$, a $(c,h)$ singular vector exists iff 
$$h=h_{r,s}(\tau)=\frac{(r\tau-s)^2-(\tau-1)^2}{4\tau}=h_{s,r}(\tau^{-1})$$
for some $r,s\in\N$. In this case there is a singular vector $\Delta_{r,s}$ in degree $-rs$, which is uniquely defined up to multiplicative constant (see e.g. \cite{IohKog_Vir}). Of particular interest to us here will be
$$\Delta_{2,1}=L_{-1}^2-\tau L_{-2}$$
Let $\phi\in C^\infty(U_1,V_1\otimes {\mc L}_1^{\otimes c})$ with $L_0\phi=h\phi$ (or equivalently $\phi\in C^\infty(U,|T^{-1}\Sigma|^{\otimes h}\otimes{\mc L}^{\otimes c})$). We have $L_n\phi=0$ for $n>0$, i.e. $\phi$ a $(c,h)$ highest-weight vector. Then $\Delta_{r,s}\phi$ is defined in $C^\infty(U_k,V_k\otimes{\mc L}_k^{\otimes c})$, $k$ large enough. If $h=h_{r,s}$, for any $m>0$ we have $L_m\Delta_{r,s}\in {\mc U}(\Vir)(L_0-h,{\bf c}-c,L_1,\dots)$ (by a slight abuse of notation, we identify elements of ${\mc U}(\Vir)$ with their images as operators on $C^\infty(U_\infty,|T^{-1}\Sigma|^{\otimes h}\otimes{\mc L}^{\otimes c}_\infty)$), and consequently:
$$L_m\Delta_{r,s}\phi=0$$
which is saying that $\Delta_{r,s}\phi\in C^\infty(U_k,V_k\otimes{\mc L}_k^{\otimes c})$ depends solely on the 1-jet (through a multiplicative constant). Since this holds for a generic section $\phi$, this shows that the coefficients of the differential operator $\Delta_{r,s}$ depend on the choice of local coordinate only through a multiplicative constant.

Consequently we may consider:
\begin{equation}\label{eq:candiffop}
\Delta_{r,s}:C^\infty(U,|T^{-1}\Sigma|^{\otimes h_{r,s}}\otimes{\mc L}^{\otimes c})\rightarrow C^\infty(U,|T^{-1}\Sigma|^{\otimes (h_{r,s}+rs)}\otimes{\mc L}^{\otimes c})
\end{equation}
which is now a genuine differential operator on bundles over the Teichm\"uller space ${\mc T}$ and is well-defined up to multiplicative constant.

\subsection{Commuting representations}\label{ssec:commrep}

The construction of the Witt/Virasoro generators is local at a marked point. Consequently it may be carried out simultaneously at several marked points, leading to commuting representations of the Virasoro algebra. Let us formalize this observation.

We consider a bordered surface $\Sigma$ with two marked boundary points $X,Y$ at which the deformations will occur; several ``spectator" boundary and bulk points may also be marked. We mark a $k_1$-jet of local coordinate $z_1$ at $X$ and a $k_2$-jet of local coordinate $z_2$ at $Y$. The Teichm\"uller space of surfaces of the same type (with the same markings) is simply denoted by ${\mc T}_{1^{k_1},1^{k_2}}$. Let us fix $m_1,m_2\in\Z$. If $k_1,k_2$ are large enough, one may consider a two-parameter family of deformations of the surface $\Sigma_{s,t}$ obtained in the following way: excise a semidisk around $X$ and glue it back according to $g_s$, the flow generated by $-z_1^{m_1+1}\partial_{z_1}$; excise a semidisk around $Y$ and glue it back according to $h_t$, the flow generated by $-z_2^{m_2+1}\partial_{z_2}$. These semidisks are chosen to be disjoint. Then if $U$ is a neigborhood of (the class of) $\Sigma$ in ${\mc T}_{1^0,1^0}$ and $U_{k_1,k_2}$ its preimage in ${\mc T}_{1^{k_1},1^{k_2}}$, we have the Witt generators at $X,Y$ \eqref{eq:lndef} satisfying:
\begin{align*}
\ell_{m_1}^X f(\Sigma_{s,t})&=\frac{d}{ds}f(\Sigma_{s,t})\\
\ell_{m_2}^Y f(\Sigma_{s,t})&=\frac{d}{dt}f(\Sigma_{s,t})
\end{align*}
if $f\in C^\infty(U_{k_1,k_2})$. Then we have for all $m,n\in\Z$:
\begin{align*}
[\ell^X_{m},\ell^X_n]&=(m-n)\ell^{X}_{m+n}\\
[\ell^Y_{m},\ell^Y_n]&=(m-n)\ell^{Y}_{m+n}\\
[\ell^X_m,\ell^Y_n]&=0
\end{align*}
the last line resulting from the existence of an explicit commuting flow $(s,t)\mapsto \Sigma_{s,t}$.\\
Since the Virasoro generators are also defined in terms of local deformations, the same argument shows that
\begin{align*}
[L^X_{m},L^X_n]&=(m-n)L^{X}_{m+n}+\frac c{12}m(m^2-1)\delta_{m+n,0}\\
[L^Y_{m},L^Y_n]&=(m-n)L^{Y}_{m+n}+\frac c{12}m(m^2-1)\delta_{m+n,0}\\
[L^X_m,L^Y_n]&=0
\end{align*}
as operators on $C^\infty(U_\infty,{\mc L}^{\otimes c}_\infty)$, where a smooth section of ${\mc L}_\infty^{\otimes c}$ can be written locally as the pullback of a smooth section of ${\mc L}^{\otimes c}_{k_1,k_2}$ for some (finite) $k_1,k_2$.

Let us make an observation which may give some additional motivation to the definition of the Virasoro operators \eqref{eq:Lndef}. By trivializing ${\mc L}$, one may identify $L^X_{-2}$ (resp. $L^Y_{-2}$) with $\ell^{X}_{-2}+\frac{c}6S_\Sigma(X,z_1)$ (resp. $\ell^{Y}_{-2}+\frac{c}6S_\Sigma(Y,z_2)$) where $S_\Sigma(X,z)$ is the Schwarzian connection evaluated at the point $X$ in the local coordinate $z$ (a $3$-jet is enough for this). Comparing $[\ell^{X}_{-2},\ell^Y_{-2}]=0$ and $[L^X_{-2},L^Y_{-2}]=0$ leads to:
$$\ell^X_{-2}S_\Sigma(Y,z_2)=\ell^Y_{-2}S_\Sigma(X,z_1)$$
Conversely, one may recover $[L^X_m,L^Y_n]=0$ from $[\ell^X_m,\ell^Y_n]=0$ and this identity. The reader may convince himself that both sides of this identity are proportional to:
$$(\partial_{n_1}\partial_{n_2}G_\Sigma(X,Y))^2$$
If $\tilde S$ is another Schwarzian connection, it differs from $S$ by a quadratic form, i.e. $\tilde S=S+\omega$ where $\omega$ is an arbitrary section of the bundle $\Sigma\mapsto H^0(\Sigma,K_\Sigma^2)$. Setting $\tilde L^X_n=\ell^X_n$ if $n\geq -1$, $\tilde L^X_n=\ell^X_n+\frac c{6(-n-2)!}(\ell_{-1}^X)^{-n-2}\tilde S$ also gives operators satisfying the Virasoro commutation relations. However for a generic $\omega$ one cannot construct operators $\tilde L^Y_n$ also satisfying the Virasoro commutation relations and commuting with the $\tilde L^X_n$'s.

Manifestly, the argument above extends to any finite number of boundary deformation points $X_1,\dots,X_n$, yielding a representation of ${\mc U}(\Vir)^{\otimes n}$.

\subsection{Uniformization}

Here we illustrate concretely the concepts introduced earlier by explaining how to write the Virasoro representation described above in coordinates; in simple topologies, a choice of coordinates on the Teichm\"uller space is given by uniformization.

\subsubsection{Simply-connected domains}

Consider the space of simply-connected domains with $N+2$ marked points $X_0,\dots,X_{N+1}$ on the boundary; at $X_0$ we also give a $k$-jet of local coordinate; this gives the space ${\mc T}_{1^k}$. By uniformization, any such surface is equivalent to $(\H,0,z_1,\dots,z_N,\infty)$ and the $k$-jet at $0$ can be written as $w=z(1+a_2z+\cdots+a_kz^{k-1})$, where $z$ denotes the natural coordinate in $\H$. Then $(z_1,\dots,z_N,a_2,\dots,a_k)$ are smooth coordinates on ${\mc T}_{1^k}$ ($z_i\neq z_j$, $z_i\neq 0$). Alternatively, we could fix $z_N=1$ and take $(z_1,\dots,z_{N-1},a_1,\dots,a_k)$ as coordinates, but the resulting expressions are a bit more complicate.

For $n\in\Z$, let us consider the vector field 
$$w^{n+1}\partial_w=z^{n+1}(1+a_2z+\cdots+a_{k-1}z^{k-1})^{n+1}(1+2a_2z+\cdots+ka_kz^{k-1})^{-1}\partial_z$$
We may write:
\begin{equation}\label{eq:bjndef}
w^{n+1}\partial_w=\sum_{j=n}^\infty b_{j,n}z^{j+1}\partial_z
\end{equation}
where $b_{j,n}(a_1,\dots,a_{j-n+1})$ (and $b_{n,n}=1$). Here $\sum_{j=n}^{0}b_{j,n}z^{j+1}\partial_z$ is a vector field which is holomorphic away from $0$ and vanishes at infinity; $\sum_{j=1}^k b_{j,n}z^{j+1}\partial_z$ is holomorphic near $0$; and $\sum_{j=k+1}^\infty b_{j,n}z^{j+1}\partial_z$ maps to $0$ in $T{\mc T}_{1^k}$.

Let us first consider the vector fields $w^{n+1}\partial_w$, $n>0$, which operate on the $k$-jet but not on the moduli $z_1,\dots,z_N$. The vector field $-z^2\partial_z$ corresponds to replacing the coordinate $w$ with $w-\eps w^2$ and differentiating w.r.t. $\eps$ (see the end of Section \ref{Sec:Wittrep}), or in terms of coefficients:
$$-\ell_1=\partial_{a_2}+2a_2\partial_{a_3}+\cdots$$

For general $n$, $\ell_n:C^\infty({\mc T}_{k})\rightarrow C^\infty({\mc T}_{k'})$, $k'=k+n^{-}$ is given by:
$$-(\ell_n f)(z_1,\dots,z_N,a_2,\dots,a_{k'-1})=\sum_{i=1}^N \left(\sum_{j:n\leq j\leq 0} b_{j,n}z_i^{j+1}\right)\partial_{z_i}+\sum_{j=1}^{k}b_{j,n}\partial_{a_j}
$$
where the $b_{j,n}$'s are computable polynomials in the $a_i$ variables, specified by \eqref{eq:bjndef}.

For small $n$'s we have
\begin{align*}
w\partial_w&=(z-a_2z^2+(2a_2^2-2a_3)z^3+\cdots)\partial_z\\
\partial_w&=(1+2a_2z+3a_3z^2+4a_4z^2+\cdots)^{-1}\partial_z\\
&=(1-2a_2z+(4a_2^2-3a_3)z^2+(-8a_2^3+12a_2a_3-4a_4)z^3+\cdots)\partial_z\\
w^{-1}\partial_w&=\frac{(1+2a_2z+3a_3z^2+\cdots)^{-1}}{z(1+a_2z+a_3z^2+\cdots)}\partial_z\\
&=\left(\frac 1z-3a_2+(7a_2^2-4a_3)z+(-15a_2^3+19a_2a_3-5a_4)z^2+\cdots\right)\partial_z\\
w^{-2}\partial_w&=\frac{(1+2a_2z+3a_3z^2+\cdots)^{-1}}{z^2(1+a_2z+a_3z^2+\cdots)^2}\partial_z\\
&=\left(\frac 1{z^2}-\frac{4a_2}z+(11a_2^2-5a_3)+(-26a_2^3+28a_2a_3-6a_4)z+\cdots\right)\partial_z
\end{align*}
From this we read:
\begin{align*}
-\ell_1&=\partial_{a_2}+2a_2\partial_{a_3}+\cdots\\
-\ell_0&=\sum_{i=1}^Nz_i\partial_{z_i}-a_2(\partial_{a_2}+2a_2\partial_{a_3})+(2a_2^2-2a_3)\partial_{a_3}+\cdots\\
-\ell_{-1}&=
\sum_{i=1}^N(1-2a_2z_i)\partial_{z_i}+(4a_2^2-3a_3)(\partial_{a_2}+2a_2\partial_{a_3})+(-8a_2^3+12a_2a_3-4a_4)\partial_{a_3}+\cdots\\
-\ell_{-2}&=\sum_{i=1}^N\left(\frac{1}{z_i}-3a_2+(7a_2^2-4a_3)z_i\right)\partial_{z_i}+(-15a_2^3+19a_2a_3-5a_4)(\partial_{a_2}+2a_2\partial_{a_3})+\cdots\\
-\ell_{-3}&=\sum_{i=1}^N\left(\frac 1{z_i^2}-\frac{4a_2}{z_i}+(11a_2^2-5a_3)+(-26a_2^3+28a_2a_3-6a_4)z_i\right)\partial_{z_i}+\cdots
\end{align*}
From these expressions one may check directly some of the simplest commutation relations, e.g. $\ell_1\ell_{-1}-\ell_{-1}\ell_1=2\ell_0$ on $C^\infty({\mc T_2})$, $\ell_{-1}\ell_{-2}-\ell_{-2}\ell_{-1}=\ell_{-3}$ on $C^\infty({\mc T}_1)$. Moreover, if $\ell_0 f=hf$ (viz. $f$ is a homogeneous function of the $z$'s. with our choice of coordinates), then 
\begin{align*}
\ell_{-1}f&=-\sum_i\partial_{z_i}f-2a_2hf\\
\ell_{-2}f&=-\sum_i\frac{1}{z_i}\partial_{z_i}f+3a_2\sum_i\partial_{z_i}f+(7a_2^2-4a_3)hf\\
\ell_{-1}^2f&=\sum_{i,j}\partial_{z_iz_j}f+2a_2(h+1)\sum_i\partial_{z_i}f+2a_2h\left(\sum_i\partial_{z_i}f+2a_2hf\right)+2h(4a_2^2-3a_3)f\\
&=\sum_{i,j}\partial_{z_iz_j}f+2a_2(2h+1)\sum_i\partial_{z_i}f+2h(4a_2^2-3a_3+2a_2^2h)f
\end{align*}
where we used $\ell_0(\partial_{z_i}f)=(h+1)\partial_{z_i}f$.

In order to get to Virasoro generators, we evaluate
$$S_\H(w)=\{w;z\}=6(a_2^2-a_3)$$
since $S_\H(z)=0$. Then (omitting the reference section $s$)
\begin{align*}
L_{-1}&=\ell_{-1}\\
L_{-2}&=\ell_{-2}+\frac c2(a_2^2-a_3)\\
L_{-3}&=\ell_{-3}+\frac c2(-8a_2^3+12a_2a_3-4a_4)
\end{align*}
Now consider the operator $\Delta_{2,1}=L_{-1}^2-\tau L_{-2}$. If $L_0f=hf$, then
\begin{align*}
(L_{-1}^2-\tau L_{-2})f&=
\sum_{i,j}\partial_{z_iz_j}f+\tau\sum_i\frac{1}{z_i}\partial_{z_i}f+a_2(2(2h+1)-3\tau)\sum_i\partial_{z_i}f\\
&+\left(h(8a_2^2-6a_3+4a_2^2h-\tau(7a_2^2-4a_3))+\frac{\tau c}2(a_3-a_2^2)\right)f
\end{align*}
For the special values
\begin{align*}
\tau&=\frac 4\kappa\\
h&=h_{2,1}=\frac{6-\kappa}{2\kappa}=\frac 34\tau-\frac 12\\
c&=\frac{(6-\kappa)(3\kappa-8)}{2\kappa}=h(12\tau^{-1}-8)
\end{align*}
this reduces to
$$
(L_{-1}^2-\tau L_{-2})f=
\sum_{i,j}\partial_{z_iz_j}f+\tau\sum_i\frac{1}{z_i}\partial_{z_i}f=\left((-\sum_i\partial_{z_i})^2-\tau(-\sum_i{z_i}^{-1}\partial_{z_i})\right)f
$$
where the RHS has weight $h+2$ (and no longer depends on $a_2,a_3,\dots$). We recover the usual rule (\cite{BPZ}) that a singular vector is obtained by substituting $\ell^0_n=-\sum_iz_i^{n+1}\partial_{z_i}$ for $L_n$. The $\ell^0_n$'s represent the Witt algebra and operate on functions of the $z$'s (a fixed number of variables); the $L_n$'s represent the Virasoro algebra and operate on functions of the $z$'s and $a$'s. Let us expand on this comment.

We can write $L_{-n}=\ell^0_{-n}+D_n$, where $D_n$ is a differential operator (in the $z$'s and $a$'s) with coefficients vanishing at $a_2=a_3=\cdots=0$ ($n\geq 0$). It follows easily that
$$L_{-n_1}\dots L_{-n_k}=\ell^0_{-n_1}\cdots\ell^0_{-n_k}+D$$
where $D$ has the same property. Then $\Delta_{r,s}=\Delta_{r,s}^0+D_{r,s}$, where $\Delta_{r,s}^0$ is obtained by substituting $L_{-n}$ with $\ell^0_{-n}$ in the singular vector $\Delta_{r,s}$. If $\phi$ satisfies $\Delta_{r,s}(\phi s)=0$ ($\phi$ has weight $h_{r,s}$ at $0$ and $\infty$) and we set $f(z_1,\dots,z_N)=\phi(\H,0,z_1,\dots,z_N,\infty,z)$ (viz. the local coordinate at $0$ is the standard coordinate), we have $\Delta^0_{r,s}f=0$, a PDE in $N$ variables and of degree $rs$. In this fashion we recover the classical {\em Belavin-Polyakov-Zamolodchikov differential equations} \cite{BPZ}. 

This discussion can easily be extended to the case where the spectator points $z_1,\dots,z_n$ also carry weights $h_1,\dots,h_n$, in which case
$$\ell^0_{-n}=\sum_i\left(-z_i^{1-n}\partial_{z_i}-(1-n)h_iz_i^{-n}\right)$$

\subsubsection{Relation with the Bauer-Bernard framework}\label{ssec:BB}

At this point it is instructive to comment on the relation with the work initiated by Bauer and Bernard \cite{BauBer_martVir,BB_CFTSLE,BB_part}, and developed in particular in \cite{Kyt_Virmod}, which, in simply-connected domains, also produces a Virasoro representation in terms of differential operators. 

A simply-connected domain with $(n+1)$ marked points, one of them carrying a (formal) local coordinate, is equivalent to $(\H,x_1,\dots,x_n,\infty,w)$, where $w$ is the local coordinate at $\infty$, which we now parameterize. Consider a series 
$$f(u)=\sum_{j\leq 0}f_ju^{j+1}=u+0+\frac{f_{-2}}u+\cdots$$
with $f_0=1$, $f_{-1}=0$ (``hydrodynamic normalization"); this may be thought as a formal series in $\R[[u^{-1}]][u]$ or (for suitable $f_j$'s) as a germ of analytic function near $\infty$. Such series occur in particular when uniformizing subdomains of $\H$: $f=f_K:\H\setminus K\rightarrow\H$ a conformal equivalence. Consider its inverse
$$u=f^{-1}(z)=z+0-\frac{f_{-2}}z+\cdots$$
the coefficients of which are polynomials in the $f_j$'s and may be expressed by the Lagrange inversion formula.

If $z$ denotes the standard coordinate in $\H$, define the local coordinate $w$ by
\begin{equation}\label{eq:BBw}
w=\frac 1u=\frac{1}{f^{-1}(z)}=\left(z-\frac{f_{-2}}z+\cdots\right)^{-1}=z^{-1}+\frac{f_{-2}}{z^3}+\cdots
\end{equation}
This makes sense for germs of local coordinates at $\infty$ or for formal coordinates. 
Remark that the hydrodynamic normalization fixes the translation and scaling degrees of freedom. Here the space of simply-connected domains with $(n+1)$ marked points and a marked $k$-jet has coordinates $(x_1,\dots,x_n,f_{-2},\dots,f_{1-k})$. We now want to write the $\ell_n$'s and $L_n$'s in these coordinates.

We now consider the $\ell_n$'s corresponding to a deformation {\em at infinity} written in these coordinates. For $n\geq 2$, the action consists in replacing $w$ with $w-\eps w^{n+1}$ (or $u$ with $u+\eps u^{1-n}$) and differentiating w.r.t. $\eps$ at $0$:
\begin{align*}
\frac{1}{f^{-1}_\eps(z)}&=\frac{1}{f^{-1}(z)}-\frac{\eps}{f^{-1}(z)^{n+1}}+o(\eps)\\
f_\eps^{-1}(f(u))&=u+\eps u^{1-n}+o(\eps)\\
f(u)&=f_\eps(u)+\eps f'(u)u^{1-n}+o(\eps)
\end{align*}
This gives the simple expression (for $n\geq 2$)
\begin{equation}\label{eq:BB0}
\ell_n=-\sum_{j\leq 0}(j+1)f_j\partial_{f_{j-n}}=-\sum_{\ell\leq -n}(\ell+n+1)f_{\ell+n}\partial_{f_\ell}
\end{equation}
For $n\leq 1$, we focus for simplicity on the action on the $x_1,\dots,x_n$ variables, and start from the vector field
\begin{align*}
-w^{n+1}\partial_w&=u^{1-n}\partial_u=u^{1-n}\left(\frac{du}{dz}\right)^{-1}\partial_z\\
&=(P_n(z)+R_n(z))\partial_z
\end{align*}
where $P_n\in\C[z]$ and $R_n\in z^{-1}\C[[z^{-1}]]$. As for $n\geq 2$, $R_n\partial_z$ corresponds to an action on the $f_j$'s (leaving the $x_i$'s fixed). Classically, one can project a Laurent series onto its holomorphic at $0$ (resp. at infinity) component with the Cauchy integral formula. Here
$$P_n(x)=\frac{1}{2i\pi}\oint u^{1-n}(z)\left(\frac{du}{dz}\right)^{-1}\frac{dz}{z-x}$$
where $\oint$ is a contour integral on a large circle containing $x$ on which $u$ converges (if $u$ is merely a formal series, this can still be interpreted as an algebraic map on the $f_j$'s, as will be the case for all such integrals in this section). A change of variables gives
$$P_n(x)=\frac{1}{2i\pi}\oint \frac{u^{1-n}f'(u)^2}{f(u)-x}du$$
and then
$$\ell_n=\sum_i P_n(x_i)\partial_{x_i}+\cdots$$
For the action on the jet coordinates, we start by writing
$$R_n(x)=-\frac{1}{2i\pi}\oint  u^{1-n}(z)\left(\frac{du}{dz}\right)^{-1}\frac{dz}{z-x}=-\frac{1}{2i\pi}\oint \frac{u^{1-n}f'(u)^2}{f(u)-x}du$$
where $x$ lies now outside of the contour. Then
\begin{align*}
R_n(z)\partial_z&=-\frac{1}{2i\pi}\oint \frac{v^{1-n}f'(v)^2}{f(v)-z}dv\partial_z\\
&=-\frac{1}{2i\pi}\oint \frac{v^{1-n}f'(v)^2}{f(v)-f(u)}dv f'(u)^{-1}\partial_u
\end{align*}
By \eqref{eq:BB0} we know the effect of vector fields $-w^{1+m}\partial_w$, $m\geq 2$, on the jet coordinates. We simply expand $R_n(z)\partial_z$ in such vector fields, using again a Cauchy integral formula to extract coefficients:
$$R_n(z)\partial_z=-\sum_{m\geq 2}\left(\frac{1}{(2i\pi)^2}\oint_{C_o}\oint_{C_i}
\frac{v_1^{1-n}f'(v_1)^2}{f(v_1)-f(v_2)}dv_1 f'(v_2)^{-1}v_2^{m-2}dv_2\right) w^{m+1}\partial_w
$$
where $C_i$ is a contour in the interior of the larger counter $C_o$. From \eqref{eq:BB0} and
$$\sum_{m\geq 2}v^{m-2}\sum_{\ell\leq -m}(\ell+m+1)f_{\ell+m}\partial_{f_\ell}=
\sum_{\ell\leq -2}v^{-\ell-2}f'(v)\partial_{f_\ell}
$$
we get
\begin{align*}
\ell_n=&\sum_i\left(\frac{1}{2i\pi}\oint \frac{u^{1-n}f'(u)^2}{f(u)-x}du\right)\partial_{x_i}\\
&-\sum_{\ell\leq -2}\left(\frac{1}{(2i\pi)^2}\oint_{C_o}\oint_{C_i}
\frac{v_1^{1-n}f'(v_1)^2}{f(v_1)-f(v_2)}dv_1v_2^{-\ell-2}dv_2\right)\partial_{f_\ell}
\end{align*}
Finally we turn to the Schwarzian correction for $c\neq 0$. From \eqref{eq:BBw} (and the fact that $S_\H=0$ at $\infty$ in the standard coordinate $z^{-1}$), we have $S_\H=-6 f_{-2}$ (at infinity, w.r.t. the coordinate $w$). More generally, set $\iota:z\mapsto z^{-1}$ (a fractional linear transformation) and $w_0=z^{-1}$. Then by basic properties of the Schwarzian derivative
$$\{w_0;w\}=\{\iota\circ f\circ\iota;w\}=\{f\circ\iota;w\}=u^4(Sf)(u)
$$
By the geometric representation of $\ell_{-1}$, $S_\H$ evaluated at the point with $w=\eps$ w.r.t. the coordinate $w-\eps$ has expansion $\sum_{k\geq 0}\frac{\eps^k}{k!}\ell_{-1}^kS_\H$, which by the previous computation equals $\eps^{-4}(Sf)(\eps^{-1})$. Then
$$\frac{1}{k!}\ell_{-1}^kS_\H=\frac{1}{2i\pi}\oint \eps^{-k-4}(Sf)(\eps^{-1})\frac{d\eps}{\eps}=\frac{1}{2i\pi}\oint u^{k+4}(Sf)(u)\frac{du}{u}$$
so that (with $k=-n-2$)
\begin{align*}
L_n=&\frac{c}{12}\cdot\frac{1}{2i\pi}\oint u^{1-n}(Sf)(u)du\\
&+\sum_i\left(\frac{1}{2i\pi}\oint \frac{u^{1-n}f'(u)^2}{f(u)-x}du\right)\partial_{x_i}
-\sum_{\ell\leq -2}\left(\frac{1}{(2i\pi)^2}\oint_{C_o}\oint_{C_i}
\frac{v_1^{1-n}f'(v_1)^2}{f(v_1)-f(v_2)}dv_1v_2^{-\ell-2}dv_2\right)\partial_{f_\ell}
\end{align*}
which is precisely the differential representation of the Virasoro algebra constructed and studied in \cite{BauBer_martVir,BB_CFTSLE,BB_part,Kyt_Virmod}. Let us stress that the expressions on the RHS are polynomials in the $f_j$'s and consequently are still well-defined if $f$ is a formal series. We also refer to Section \ref{sssec:locmart} for further comments on that framework.

\subsubsection{Doubly-connected domains}

The Loewner equation and SLE in doubly-connected domains have been studied rather extensively, see e.g. \cite{Dub_ann,Zhan_doubly,Zhan_annprop,HBB_GFF,Law_multdef}. As is well-known, any doubly-connected domain is conformally equivalent to an annulus; we shall use the flat (with geodesic boundary) model given by
$${\bf S}_t=\{z:0<\Im z<t\}/\Z$$
with $\Z$ acting by (horizontal) translations. If we consider the space of ${\mc T}_{1^k}$ such surfaces with $n+1$ points $Z_0,\dots,Z_{N}$ marked on the same (for simplicity) boundary arc, with a $k$-jet marked at $Z_1$ (where the perturbation occurs), we get a set of local coordinates $t,z_1,\dots,z_N,a_1,\dots,a_k$ by mapping such a marked surface to
$$({\bf S}_t,0,z_1,\dots,z_N)$$
with the $k$-jet of local coordinate at $Z_1$, with a representative given by $w=\sum_{i\geq 1} a_i(z-z_1)^i$ ($z-z_1$ is the reference local coordinate given by the natural embedding - up to translations - ${\bf S}_t\hookrightarrow\C$). Here $z_1,\dots,z_N\in\R\mod \Z$, $t>0$, $a_1>0$, $a_2,\dots\in\R$. For the variety - and by contrast with the simply-connected case -  the fixed point 0 is chosen to be a spectator point.

Our next task is to write the $\ell_n$'s in these explicit coordinates. The first step, identical to the simply-connected case \eqref{eq:bjndef}, consists in writing
$$w^{n+1}\partial_w=\sum_{j\in\Z} b_{n,j}(z-z_1)^{j+1}\partial_z$$
an identity of Laurent vector fields near $z_1$. The vector field $(z-z_1)^{n+1}\partial_z$ (in a semi-annulus neighborhood of $z_1$) defines an element $H^1({\bf S}_t,(T^{-1}{\bf S}_t)\otimes{\mc O}(-D))$, where $D=Z_0+(k+1)Z_1+Z_2+\cdots+Z_k$ is the relevant divisor; we denote this element by $[(z-z_1)^{n+1}\partial_z]$. By Kodaira-Spencer \eqref{eq:KSisom}, this is identified with the tangent space (to the Teichm\"uller  space), which is spanned by the vectors $\partial_t,\partial_{z_1},\dots,\partial_{a_1},\dots$ and we simply want to find the coordinates of $[(z-z_1)^{n+1}\partial_z]$ in this basis. 

We can consider a covering of ${\bf S}_t$ adapted to the situation. Let $U_i=D^+(z_i,\eps)$ for $i=0,\dots,N$ with $z_0=0$ and $\eps>0$ small enough. Let $U_{N+1}=\{z\in{\bf S}_t:\Im z>2t/3\}$; and $U_{N+2}$ be an open set intersecting each of $U_0,\dots,U_N$ in a semi-annulus and $U_{N+1}$ in a strip; we choose $\eps$ so that $U_0,\dots,U_{N+1}$ are all disjoint and $U_0,\dots,U_{N+2}$ cover ${\bf S}_t$.

Relatively to this cover we have simple representations (as \v Cech cocycles) of our basis vectors. Namely $\partial_{z_i}$ corresponds to the cocycle equal to $\partial_z$ in $U_{i,N+2}$ and $0$ elsewhere; and $\partial_t$ corresponds to the cocycle equal to $i\partial_z$ in $U_{N+1,N+2}$ and $0$ elsewhere (with the usual notation $U_{i,j}=U_i\cap U_j$).

It is easy to check that there is a unique meromorphic vector field $x\mapsto V_0(x,y)\partial_x$ which is holomorphic in the interior ${\bf S}_t$, has continuous extension to $it+\R$ and has constant imaginary part there; has a continuous extension to $\R$ and is real there, with the exception of $y\in\R/\Z$ where it has a simple pole with residue 1; and finally $V_0(0,y)=0$. Explicitly, let $\theta$ be the odd theta function associated to $\C/(\Z+it\Z)$, and set
$$V(x,y)=\frac{\theta'}\theta(x-y)=\frac{1}{x-y}+(reg)%
$$
Then we have $\Im(V(x,y))=-2\pi$ if $x\in it+\R$, and we get explicitly
$$V_0(x,y)=V(x,y)-V(0,y)=\frac{1}{x-y}+c_0(x,y)$$
Then for $m>0$, we have
$$\frac{1}{m!}\partial_y^m V_0(.,y)=\frac{1}{(x-y)^{m+1}}+c_m(x,y)$$
with $c_m$ biholomorphic; and $\partial_y^m V_0(.,y)$ is real along both boundary components. 

We can now evaluate the coordinates of $[(z-z_1)^{n+1}\partial_z]$. If $n\geq 0$, it is a vertical vector (operating only on the formal coordinate) and corresponds to $\partial_{a_n}$ if $n\leq k$. If $n=-1$, we get $\partial_{z_1}$. If $n=-2$, we write 
$$(z-z_1)^{-1}\partial_z=\left((z-z_1)^{-1}-V_0(z,z_1)\right)\partial_z+V_0(z,z_1)\partial_z$$
where the first summand is regular near $z_1$. In turn, near $z_j$, we can write $V_0(z,z_1)=V_0(z_j,z_1)+(V_0(z,z_1)-V_0(z_j,z_1))$.  Hence $(z-z_1)^{-1}\partial_z$ corresponds to the tangent vector
$$2\pi\partial_t+\sum_{j>1}V_0(z_j,z_1)\partial_{z_j}-c_0(z_1,z_1)\partial_{z_1}-\partial_xc_0(z_1,z_1)\partial_{a_1}-\cdots$$
Note that $c_0(z,z)=V(z,0)$. Similarly, if $m\geq 1$, $(z-z_1)^{-m-1}\partial_z$ corresponds to
$$0\partial_t+\sum_{j>1}\frac{1}{m!}\partial_y^mV_0(z_j,z_1)\partial_{z_j}-c_m(z_1,z_1)\partial_{z_1}-\partial_xc_m(z_1,z_1)\partial_{a_1}-\cdots
$$
The Poisson kernel in ${\bf S}_t$ is given by
$$\pi P_{{\bf S}_t}(x,y)=-\Im(V(x,y))-\frac{2\pi}t y$$
for $x\in{\bf S}_t$, $y\in\R$; consequently the Poisson excursion kernel \eqref{eq:Poissonexcdef} is given by
$$\pi H_{{\bf S}_t}(x,y)=\partial_xV(x,y)-\frac{2\pi}t$$
for $x,y\in\R$, and the Schwarzian connection \eqref{eq:Sconndef} (w.r.t. the standard coordinate $z$ is
$$\frac 16S_{{\bf S}_t}(z)=-\frac{\theta'''}{3\theta'}(0)-\frac{2\pi}t$$
which also gives an explicit expression for the $L_n$'s (with $L_{-2}=\ell_{-2}+\frac c{12}S_{{\bf S}_t}$ etc).

\section{$\SLE$ measures}

In this section we describe the construction of SLE measures on open Riemann surfaces by localization in path space, which allows to reduce to the simply-connected case. These are positive measures; their total mass (or {\em partition function}), may be seen as a function (or more accurately, a section) on Teichm\"uller space. The main result of this section is that these partition functions are finite (if $c\leq 0$) and smooth and are annihilated by the canonical differential operator $\Delta_{2,1}$.

\subsection{Chordal SLE measures}

We start by gathering a few facts about chordal Schramm-Loewner Evolutions (SLEs); for general reference, see e.g. \cite{W1,Law_AMS}. In the upper half-plane $\H$, consider the flow of analytic maps $g_t:\H\setminus K_t\rightarrow \H$ given by $g_0(z)=z$,
$$\frac{d}{dt} g_t(z)=\frac{2}{g_t(z)-\sqrt\kappa B_t}$$
where throughout $B$ is a standard linear Brownian motion started at 0 and $\kappa>0$ is fixed. For a given $z\in\H$, this ODE is solvable up to an explosion time $\tau_z$. The compact {\em hull} $K_t\subset{\overline \H}$ is defined by ${\overline{\{z\in\H:\tau_z\leq t\}}}$. The {\em trace} $t\mapsto \gamma_t=\lim_{\eps\rightarrow 0}g_t^{-1}(\sqrt\kappa B_t+i\eps)$ is a continuous non-self-traversing path, and $\H\setminus K_t$ is a.s. the unbounded connected component of $\H\setminus\gamma_{[0,t]}$. If $\kappa\leq 4$, $\gamma$ is a.s. simple. 

The path space ${\mc P}(\H,0,\infty)$ is the space of all continuous non-self-traversing paths from $0$ to $\infty$ in $\overline\H$ up to continuous increasing time reparameterizations (a non-self traversing path can be represented as the limit of a sequence of simple paths). Specifically, one can metrize the compactification $\overline\H\cup\{\infty\}$ e.g. by $d(z,w)=\min(|z-w|,|z^{-1}-w^{-1}|)$; then consider $C^\infty([0,1],{\overline \H}\cup\{\infty\})$ restricted to non-self-traversing paths with $\gamma(0)=0$, $\gamma(1)=\infty$ with semidistance given by
$$d_{\mc P}(\gamma_1,\gamma_2)=\inf_{\phi:[0,1]\nearrow [0,1]}\sup_{t\in [0,1]}d(\gamma_1(t),\gamma_2(\phi(t)))$$
We say that $\gamma_1\sim\gamma_2$ if $\gamma_1=\gamma_2\circ\phi$ for some increasing reparameterization $\phi:[0,1]\rightarrow [0,1]$. Then 
$${\mc P}(\H,0,\infty)=(C^\infty([0,1],{\overline \H}\cup\{\infty\})/\sim,d_{\mc P})$$
is a Polish (complete, metric, separable) space.

Chordal SLE induces (via the trace $\gamma$) a probability measure $\mu^\sharp_{(\H,0,\infty)}$ on the path space ${\mc P}(\H,0,\infty)$. If $\kappa\leq 4$, this measure is supported on the Borel subset of simple paths which intersect the boundary only at their endpoints. The measure $\mu^\sharp_{(\H,0,\infty)}$ is invariant under scaling. Consequently, if $(D,x,y)$ is a bounded simply-connected domain with two marked boundary points $x,y$, and $\phi:(\H,0,\infty)\rightarrow (D,x,y)$ a conformal equivalence, then $\phi$ induces a quasi-isometry ${\mc P}(\H,0,\infty)\rightarrow {\mc P}(D,x,y)$ and one may define $\mu^\sharp_{(D,x,y)}\stackrel{def}{=}\phi_*\mu^\sharp_{(\H,0,\infty)}$ the image measure.

If $\kappa\leq 4$, we parameterize
\begin{align*}
\tau&=\frac 4\kappa=\frac {4(m+1)}{m}\\
c&=1-\frac 6{m(m+1)}=\frac{(6-\kappa)(3\kappa-8)}{2\kappa}\\
h&=h_{2,1}(\tau)=\frac{(1-m)^2-1}{4m(m+1)}=\frac{6-\kappa}{2\kappa}
\end{align*}
where $m>0$.

The fundamental {\em restriction property} \cite{LSW3} states that if $(D,x,y)$ is a simply-connected domain, $(D',x,y)$ is a simply-connected subdomain which agrees with $(D,x,y)$ in neighborhoods of $x,y$, then:
\begin{equation}\label{eq:SLErestr}
d\mu^\sharp_{(D',x,y)}(\gamma)= \ind_{\gamma\subset D'} \left(\frac{H_{D}(x,y)}{H_{D'}(x,y)}\right)^h\exp(\frac c2\nu_D(\gamma;D\setminus D'))d\mu^\sharp_{(D,x,y)}(\gamma)
\end{equation}
where both sides are considered as measures on ${\mc P}(D',x,y)$. 
Here $H_D(x,y)$ is the Poisson excursion kernel \eqref{eq:Poissonexcdef} (the kernel of the Dirichlet-to-Neumann map in $D$):
$$H_D(x,y)=\partial_{n_xn_y}G_D(x,y)$$
and $\nu$ is the Brownian loop measure in $D$, and we denote:
$$\nu_D(A;B)=\nu\{\delta\subset D: \delta\cap A\neq\varnothing,\delta\cap B\neq\varnothing\}$$

\subsection{Localization in path space}

\subsubsection{Construction}

Let us consider a compact bordered Riemann surface $(\Sigma,X,Y)$, where $X,Y$ are marked boundary points. We consider the path space ${\mc P}(\Sigma,X,Y)$ of continuous paths from $X$ to $Y$ given up to reparameterization. Let us fix a reference simple path $\gamma_0$ from $X$ to $Y$ which lies in the interior of $\Sigma$  except at its endpoints. Let ${\mc P}_0(\Sigma,X,Y)$ be the pathwise connected component of ${\mc P}(\Sigma,X,Y)$ consisting of paths homotopic to $\gamma_0$ (a homotopy induces a path in path space); and ${\mc P}_0^s(\Sigma,X,Y)$ be the Borel subset of ${\mc P}_0(\Sigma,X,Y)$ consisting of simple paths which intersect the boundary only at their endpoints. The goal is now to define a suitable SLE-type measure $\mu_0$ on ${\mc P}^s_0(\Sigma,X,Y)$.

Let us define a {\em tube neighborbood} in $(\Sigma,X,Y)$ as a simply-connected,  relatively open domain $D$ which contains a semidisk neighborhood of $X$ and of $Y$. Trivially, ${\mc P}_0^s(\Sigma,X,Y)$ is covered by relatively open sets of type ${\mc P}^s(D,X,Y)$, where  $D$ is a tube neighborhood (note that all paths in ${\mc P}^s(D,X,Y)$ are in the same homotopy class). Let $\phi_D: (\H,0,\infty)\rightarrow (D,X,Y)$ a conformal equivalence; then $\mu^\sharp_{(D,X,Y)}=(\phi_D)_*\mu^\sharp_{(\H,0,\infty)}$ (the image measure of $\mu^\sharp_{(\H,0,\infty)}$ under $\phi_D$) gives a reference measure on ${\mc P}^s(D,X,Y)$; for simplicity of notation we simply denote $\mu^\sharp_D=\mu^\sharp_{(D,X,Y)}$. Let us set:
\begin{equation}\label{eq:locdef}
\ind_{\gamma\subset D}d\mu_0(\gamma)=\phi^\Sigma_D(\gamma)d\mu^\sharp_D(\gamma)
\end{equation}
where the Radon-Nikodym derivative $\phi^\Sigma_D$ is to be determined. For the definition to be consistent, we need:
\begin{equation}\label{eq:consist}
\phi^\Sigma_{D'}\frac{d\mu^\sharp_{D'}}{d\mu_D^\sharp}=\phi^\Sigma_{D}
\end{equation}
(a.e.) on ${\mc P}^s(D',X,Y)$ whenever $D'\subset D$ are tube neighborhoods containing paths homotopic to $\gamma_0$; notice that $\frac{d\mu^\sharp_{D'}}{d\mu_D^\sharp}$ is an explicit functional given by the restriction property of SLE \eqref{eq:SLErestr}.

Let $D_1,D_2$ be two tube neighborhoods. If $\gamma$ is a path (from $X$ to $Y$, as are all paths we are considering here) in $D_1\cap D_2$, let $D_\gamma$ be the union of the ranges of paths in $D_1\cap D_2$ homotopic to $\gamma$. Plainly, $D_\gamma$ is a tube neighborhood contained in $D_1$ and $D_2$. We have ${\mc P}^s(D_1)\cap{\mc P}^s(D_2)=\cup_\gamma {\mc P}_0^s(D_\gamma)$, where the RHS is a union of disjoint path spaces (a priori two paths may be homotopic in $D_1$ and in $D_2$ but not in $D_1\cap D_2$). Hence provided \eqref{eq:consist} is satisfied for all pairs of tube neighborhoods, then the measure $\phi^\Sigma_{D_i}d\mu^\sharp_{D_i}$ on ${\mc P}^s(D_i)$, $i=1,2$, agree on the intersection ${\mc P}(D_1)\cap{\mc P}(D_2)$.

A grid approximation argument shows that ${\mc P}_0^s(\Sigma)=\cup_{i=1}^\infty {\mc P}^s(D_i)$ for a well-chosen sequence of tube neighborhoods $(D_n)_{n\geq 1}$. By inclusion-exclusion, under \eqref{eq:consist} the measures $\phi^\Sigma_{D_i}d\mu^\sharp_{D_i}$ extend consistently to $\cup_{i=1}^n{\mc P}^s(D_i)$. By monotone limit, they extend uniquely to ${\mc P}_0^s(\Sigma)$. Trivially (also under \eqref{eq:consist}), the resulting measure does not depend on the choice of $D_i$'s.

Our task is now to find densities $\phi^\Sigma_D$ satisfying the consistency condition
\begin{equation}\label{eq:consistcond}
\frac{\phi^\Sigma_{D'}}{\phi^\Sigma_D}(\gamma)=\left(\frac{d\mu^\sharp_{D'}}{d\mu^\sharp_{D}}\right)^{-1}(\gamma)
=\ind_{\gamma\subset D'} \left(\frac{H_{D}(X,Y)}{H_{D'}(X,Y)}\right)^{-h}\exp(-\frac c2\nu_D(\gamma;D\setminus D'))
\end{equation}
for $D'\subset D$ tube neighborhoods. Since $D$ is given as a subset of $\Sigma$ and not of the plane, $H_D(X,Y)$ is well-defined as a 1-form in $X$ and in $Y$ (the normal derivative at $X$ and $Y$ is defined in terms of local coordinates at $X$ and $Y$). Intrinsically, $H_D(X,Y)=\ast d_X\ast d_Y G_D(X,Y)$, i.e. we take the differential of the Green kernel (which depends only on the complex structure) w.r.t. each variable. If we extend the marking of $(\Sigma,X,Y)$ to include a 1-jet of local coordinate at $X$ and $Y$, then $H_D(X,Y)$ is a function of this data. Remark that the ratio $\frac{H_D(X,Y)}{H_{D'}(X,Y)}$ does not depend on these choices, provided they are the same for $D$ and $D'$. In the case $c=0$, we may simply set:
$$\phi^\Sigma_D(\gamma)=H_D(X,Y)^h$$
which does not depend on $\gamma$ (this reflects the restriction property of $\SLE_{8/3}$). In the general case, one may set:
\begin{equation}\label{eq:candens}
\phi^\Sigma_D(\gamma)=H_D(X,Y)^h\exp(-\frac c2\nu_\Sigma(\gamma;\Sigma\setminus D))
\end{equation}
By the restriction property of the loop measure (Proposition \ref{Prop:loopmeas}), this prescription satisfies \eqref{eq:consist} and consequently we have constructed a measure $\mu_0=\mu_0(\Sigma,X,Y)$ on the path space ${\mc P}_0^s(\Sigma,X,Y)$, which we refer to as the {\em canonical} $\SLE_\kappa$ measure on ${\mc P}^s_0(\Sigma,X,Y)$ ($\kappa\in (0,4]$). It is a positive measure (and not in general a probability measure) for given 1-jets at $X,Y$; more intrinsically, it is a measure-valued tensor (or tensor-valued measure).

We obtain immediately the following restriction property: if $\Sigma'\subset\Sigma$ agrees with $\Sigma$ in a neigborhood of $X,Y$ (and other markings), then
$$d\mu_{\Sigma'}(\gamma)=\ind_{\gamma\subset\Sigma'}%
\exp\left(\frac c2\nu_\Sigma(\gamma;\Sigma\setminus\Sigma')\right)d\mu_\Sigma(\gamma)$$

Remark however that \eqref{eq:consist} may have multiple solutions. Given another solution $\tilde\phi$ (notice that $\phi^\Sigma_D$ is positive on ${\mc P}_0^s(D,X,Y)$), we see that $\tilde\phi^\Sigma_D(\gamma)/\phi^\Sigma_D(\gamma)$ does not depend on the choice of tube neighborhood $D$ of $\gamma$ and consequently may be written as a function of the path: $\tilde\phi^\Sigma_D(\gamma)=h_\Sigma(\gamma)\phi^\Sigma_D(\gamma)$ and $\tilde\mu_0=h_\Sigma\mu_0$. This is interesting in particular when $h_\Sigma(\gamma)$ depends only on the conformal type of $\Sigma\setminus\gamma$ (forgetting the marked points $X,Y$). For instance, in the case where $\Sigma$ is an annulus and $X,Y$ belong to the same boundary component, $\Sigma\setminus\gamma$ consists in a simply-connected domain and an annulus; let $r(\gamma)$ be the modulus of this last annulus. Then we may choose $h_\Sigma(\gamma)=g(r(\gamma))$, $g:(0,\infty)\rightarrow (0,\infty)$ an arbitrary bounded measurable function.

\subsubsection{Comparison}\label{ss:comp}

The construction presented here (for the ``canonical" measures) is essentially identical to the one given by Lawler in \cite{Law_multdef}, with minor nuances. In \cite{Law_multdef}, domains are planar, all tensors are evaluated w.r.t. the ambient coordinate (given by the planar embeddings of the domains) and the measures are not decomposed by isotopy type. Note that ``non-canonical" measures (viz. obtained by patching but with another collection of densities satisfying \eqref{eq:consistcond}) may also be of interest, see e.g. \cite{HBB_GFF}. 

In \cite{KontSuh}, Kontsevich and Suhov also use localization for measures on loops and paths (``intervals"). For the reader's convenience, we give a brief account of (parts of) this work and verify that, although phrased in a somewhat different formalism, that construction is also equivalent to the one discussed here.

To a surface $\Sigma$ with boundary components, one associates a line (real, one-dimensional vector space) $\det_\Sigma$ as follows. A metric (compatible with the complex structure of $\Sigma$) is {\em well-behaving} if it is flat near each boundary circle, and these circles are geodesic and have fixed length $2\pi$. (We do not consider surfaces with punctures, nor with infinitely many boundary components - and with at least one boundary component for simplicity). If $g,g'=e^{2\sigma}g$ are such metrics, $\det_\Sigma=\R[g]=\R[g']$, where by definition
$$[g']=\exp\left(-\frac{1}{12\pi}\int_\Sigma(\frac 12|\nabla_g\sigma|^2+R_g\sigma)dA_g\right)[g]$$  
Note that $\det_\Sigma$ has a natural orientation; and that $\det_{\Sigma_1\sqcup\Sigma_2}\simeq\det_{\Sigma_1}\otimes\det_{\Sigma_2}$. Note also that if $V$ is an oriented line (bundle) and $c\in\R$, one can define $V^{\otimes c}$ an oriented line (bundle) in such a way that $V^{\otimes c}\otimes V^{\otimes c'}\simeq V^{\otimes (c+c')}$ canonically.

Alternatively, one can consider the space of functions on metrics satisfying the conformal anomaly relation \eqref{eq:confanom}:
$$f(e^{2\sigma}g)=\exp\left(\frac{1}{12\pi}\int_\Sigma(\frac 12|\nabla_g\sigma|^2+R_g\sigma)dA_g\right)f(g)$$
The identification is by setting
$$[g](e^{2\sigma}g)=\exp\left(\frac{1}{12\pi}\int_\Sigma(\frac 12|\nabla_g\sigma|^2+R_g\sigma)dA_g\right)$$
Consequently, $g\mapsto \det_{\zeta}(-\Lap_g)^{-1/2}$ defines an element of $\det_\Sigma$; we refer to it as the {\em canonical vector} $v_\Sigma$. Thus one can think of the formal vector $[g]$ above as a class of equivalence of well-behaving metrics in the same conformal class and with the same $\zeta$-determinant.

Now consider $(\Sigma,X,Y)$ a surface with two marked boundary points and ${\mc P}={\mc P}(\Sigma,X,Y)$ the associated path space (``intervals" in the terminology of \cite{KontSuh}). To $\gamma\in{\mc P}$ is associated the line
$${\det}_{\Sigma,\gamma}={\det}_D\otimes{\det}_{D\setminus\Sigma}^{-1}$$
where $D$ is a tube neighborhood of $\gamma$. To see that this does not depend on $D$, choose $D'\subset D$ another such tube neighborhood. Then there is a canonical isomorphism
$${\det}_D\otimes{\det}_{D\setminus\Sigma}^{-1}\simeq
{\det}_{D'}\otimes{\det}_{D'\setminus\Sigma}^{-1}$$
Indeed, consider $g,g_l,g_r$ well-behaved metrics on $D,(D\setminus\gamma)_l,(D\setminus\gamma)_r$ respectively (here $(D\setminus\gamma)_{l/r}$ is the left/right component of $\Sigma\setminus\gamma$), chosen to agree outside of a neighborhood of $\gamma$. Similarly, we choose metrics $g',g'_l,g'_r$ on $D',(D'\setminus\gamma)_{l/r}$ that agree away from $\gamma$. Finally we assume that $g,g'$ (resp. $g_l,g'_l$, resp. $g_r,g'_r$) agree near $\gamma$. Then we identify
$$[g]/[g_{lr}]\in{\det}_D\otimes{\det}_{D\setminus\Sigma}^{-1}\simeq
[g']/[g'_{lr}]\in{\det}_{D'}\otimes{\det}_{D'\setminus\Sigma}^{-1}$$
This is a canonical isomorphism since different choices lead to multiplying both sides by the same factor (by locality of the Liouville action). Consequently $\det_{\Sigma,\gamma}$ depends only on an infinitesimal neighborhood of $\gamma$ in $\Sigma$.

At this point we have defined a collection of lines $\det_{\Sigma,\gamma}$ indexed by ${\mc P}$, which can be seen as a line bundle $\Det_\Sigma$ over the path space. This line bundle may be trivialized in the following way. The previous argument does not depend on $D$ being simply-connected and consequently we have canonically:
$${\det}_{\Sigma,\gamma}\simeq{\det}_\Sigma\otimes{\det}_{\Sigma\setminus\gamma}^{-1}$$
In turn we can trivialize the RHS using canonical vectors (which we define here in terms of $\zeta$-regularization, a deviation from \cite{KontSuh}): $v_{\Sigma,\gamma}\stackrel{def}{=}v_\Sigma\otimes v_{\Sigma\setminus\gamma}^{-1}$ is a non-vanishing section which trivializes $\Det_\Sigma$.

If $\xi:\Sigma\hookrightarrow\Sigma'$ is an embedding (mapping semidisk neighborhoods of $X,Y$ to semidisk neighborhoods of $X',Y'$), there is an induced map on path space:
$$\xi_*:{\mc P}(\Sigma)\longrightarrow{\mc P}(\Sigma')$$
and since $\det_{\Sigma,\gamma}$ is defined in terms of an infinitesimal fattening of $\gamma$, we also have a canonical map
$$\xi_{\det}:(\xi_*)^*\Det_{\Sigma'}\longrightarrow\Det_{\Sigma}$$

The following problem is addressed in \cite{KontSuh}: to show existence and uniqueness (up to multiplicative factor) of an assignment of $\lambda_{(\Sigma,X,Y)}$, a measure on ${\mc P}(\Sigma,X,Y)$ taking values in the bundle $\Det_\Sigma^{\otimes c}\otimes (T_X\Sigma\otimes T_Y\Sigma)^{-h}$ in such a way that for any embedding $\xi:(\Sigma,X,Y)\hookrightarrow(\Sigma',X',Y')$ as above,
$$\xi^*\lambda_{(\Sigma',X',Y')}=\lambda_{(\Sigma,X,Y)}$$
Recall that $T_X\Sigma$ is an oriented line (vectors tangent to the boundary, oriented so that $\Sigma$ lies to their left), so that $(T_X\Sigma)^{-h}$ is still well-defined for $h\notin\Z$ . In \cite{KontSuh} it is shown that such an assignment exists for $c\leq 1$, $h=h_{2,1}(\tau)$ and conjectured that it is unique up to multiplicative constant for these values.

In order to compare with our earlier discussion, we trivialize:
$$d\lambda_{(\Sigma,X,Y)}(\gamma)=d\mu_{(\Sigma,X,Y)}(\gamma)(v_{\Sigma}\otimes v_{\Sigma\setminus\gamma}^{-1})^{\otimes c}$$
where now $\mu$ has simply a tensor dependence at $X,Y$ (viz. is a measure once 1-jets at $X,Y$ are fixed). In order to quantify the covariance property for the $\mu$'s implied by that of the $\lambda$'s, we set $\gamma\in{\mc P}(\Sigma,X,Y)$, $\gamma'=\xi\circ\gamma\in {\mc P}(\Sigma',X',Y')$. Then we have: a canonical isomorphism $\phi:\det_{\Sigma,\gamma}\rightarrow\det_{\Sigma',\gamma'}$; a vector $v_{\Sigma,\gamma}$ in $\det_{\Sigma,\gamma}$; and a vector $v_{\Sigma',\gamma'}$ in $\det_{\Sigma',\gamma'}$. Then
$$\frac{d\xi^*\mu_{\Sigma'}}{d\mu_{\Sigma}}(\gamma)=\left(\frac{\phi(v_{\Sigma,\gamma})}{v_{\Sigma',\gamma'}}\right)^{c}$$
Let $g_1,g_2,g_3,g_4$ be well-behaving metrics in $\Sigma$, $\Sigma\setminus\gamma$, $\Sigma'$, $\Sigma'\setminus\gamma'$ respectively. We assume that $\xi_*g_1$ and $g_3$ (resp. $\xi_*g_2$ and $g_4$) agree near $\gamma$; and that $g_1$ and $g_2$ (resp. $g_3$ and $g_4$) agree away from $\gamma$. (More precisely, $\Sigma=U\cup V$, $U$ a tube neighborhood of $\gamma$, $\Sigma'=U'\cup V'$, $U'=\xi(U)$, $g_3=\xi_*g_1$ on $U'$, $g_1=g_2$ on $V$, etc.). Then
$$\left(\frac{\phi(v_{\Sigma,\gamma})}{v_{\Sigma',\gamma'}}\right)^{-2}=\frac{{\det}_\zeta(-\Lap_{g_1}){\det}_\zeta(-\Lap_{g_4})}{{\det}_\zeta(-\Lap_{g_2}){\det}_\zeta(-\Lap_{g_3})}$$
where the RHS does not depend on (consistent) choices. If $\xi$ is defined on a neighborhood of $\bar U$, we can rewrite the RHS as
$$\exp(-\nu_\Sigma(\gamma;\Sigma\setminus U)+\nu_{\Sigma'}(\gamma';\Sigma'\setminus U'))$$
reasoning as in Proposition 2.1 of \cite{Dub_SLEGFF}. In particular if $\Sigma\subset\Sigma'$ (and $\xi$ is the inclusion), we have
$$\frac{d\mu_{\Sigma}}{d\mu_{\Sigma'}}(\gamma)=\ind_{\gamma\subset\Sigma}\exp(\frac c2\nu_{\Sigma'}(\gamma;\Sigma'\setminus\Sigma))$$
This shows that the ``canonical" measures considered here (summed over isotopy types) are identical (given this trivialization) to those constructed in \cite{KontSuh}.

\subsection{Partition function}\label{ssec:partfun}

We consider again $(\Sigma,X,Y)$ a bordered surface with marked boundary points $X,Y$ (possibly additional spectator points are marked on the boundary and in the bulk); $\gamma_0$ is a reference simple path from $X$ to $Y$ on $\Sigma$; ${\mc P}^s_0(\Sigma,X,Y)$ is the space of simple paths homotopic to $\gamma_0$. We have constructed a canonical positive $\SLE_\kappa$ measure on ${\mc P}^s_0(X,Y)$, $\kappa\leq 4$, specified by \eqref{eq:locdef} and \eqref{eq:candens}. We are interested in the {\em partition function} of this measure, i.e. its total mass:
$${\mc Z}_0(\Sigma,X,Y)=\|\mu_0(\Sigma,X,Y)\|=\mu_0({\mc P}^s_0(\Sigma,X,Y))\in (0,\infty]$$
Recall that ${\mc Z}_0$ has a weight $h$ tensor dependence in the local coordinates at $X,Y$; we want to show that ${\mc Z}_0$ defines a smooth function on the Teichm\"uller space ${\mc T}_{1,1}$ (of surfaces with marked points $X,Y$ and marked 1-jets at $X,Y$; other markings are kept implicit).

First we argue that, if $c\leq 0$, ${\mc Z}_0$ is locally bounded on ${\mc T}_{1,1}$. The Poisson kernels $H_D(X,Y)$ may be explicitly represented in terms e.g. of theta functions (see \eqref{eq:Poissonexcdef}) and are not problematic. The universal cover of the bordered surface $\Sigma$ has interior the upper half-plane $\H$ (as a bordered surface, the universal cover is contained in $\overline\H$; a boundary cycle in $\Sigma$ lifts to a countable union of disjoint open intervals of $\R$). Consider a lift $\tilde\gamma_0$ of $\gamma_0$ to the universal cover, with endpoints $\tilde X$, $\tilde Y$; and $\tilde\gamma$ the lift of a path $\gamma$ isotopic to $\gamma_0$ with the same endpoints.

We have the trivial bound
$$\nu_\Sigma(\gamma;\Sigma\setminus D)
\leq \nu_\Sigma\{\delta\subset \Sigma: \delta\cap\gamma\neq\varnothing,\delta\cap \Sigma\setminus D\neq\varnothing, \delta{\rm\ contractible}\}+\nu_\Sigma\{\delta: \delta{\rm\ non\ contractible}\}$$
The second term is locally bounded on ${\mc T}$. Indeed we may choose a smooth family of Riemannian metrics for surfaces in an open neighborhood $U$ of the (class of) $\Sigma$ in ${\mc T}$. Then the systole (length of the shortest noncontractible loop) is locally bounded away from $0$ on $U'\subset\subset U$; the volume is bounded on $U'$; and the probability of hitting the boundary before time 1 for Brownian motion is bounded away from $0$ on $U'$ (and all possible starting points). Together e.g. with a Donsker-Varadhan large deviation estimate for short time diffusions gives the claimed upper bound (a Brownian loop is exponentially unlikely to travel to macroscopic distance in time $1/N\ll 1$ or to stay away from the boundary up to time $N\gg 1$, with uniform exponential bounds on $U'$).
 
Consequently, we have the local (in Teichm\"uller space) estimate $\phi_D^\Sigma(\gamma)\leq C\hat\phi_D^\Sigma(\gamma)$ where
$$\hat\phi_D^\Sigma(\gamma)=H_D(X,Y)^h\exp(-\frac c2\nu_\Sigma \{\delta\subset \Sigma: \delta\cap\gamma\neq\varnothing,\delta\cap \Sigma\setminus D\neq\varnothing, \delta{\rm\ contractible}\})$$
Let $\hat\mu_0$ be the measure given by $\hat\phi^\Sigma_Dd\mu_D^\sharp$ on ${\mc P}(D,X,Y)$ (as noted earlier, this solves \eqref{eq:consist} and therefore patches to a positive measure on ${\mc P}_0^S(\Sigma,X,Y)$), so that ${\mc Z}_0(\Sigma,X,Y)\leq C\|\hat\mu_0\|$.

Let $\tilde D$ be the preimage of $D$ in $\H$; it is a tube neighborhood of $\tilde\gamma_0$. We observe that
$$\nu_\Sigma(\gamma;\Sigma\setminus D)^{contr}\leq \nu_\H(\tilde\gamma;\H\setminus\tilde D)
$$
where the LHS counts only contractible loops. The difference comes from loops in $\H$ that intersect multiple lifts of $\gamma$; a loop (in $\Sigma$) intersecting exactly $k$ lifts of $\gamma$ (including the ``distinguished" one $\tilde\gamma$) is counted $k$ times on the RHS and once on the LHS. Then, if $c\leq 0$, $\tilde\phi_D^\Sigma(\gamma)\leq \phi_{\tilde D}^\H(\tilde\gamma)$ and consequently the pullback $\hat\mu_0$ of $\tilde\mu_0$ to ${\mc P}(\tilde D,\tilde X,\tilde Y)$ is dominated by $\phi^\H_{\tilde D}d\mu^\sharp_{\tilde D}$. By the restriction property \eqref{eq:SLErestr}, this is simply the restriction of $H_{\H}(\tilde X,\tilde Y)^h\mu^\sharp_\H$ (the local coordinates at $\tilde X,\tilde Y$ are pullbacks of the marked local coordinates at $X,Y$) to ${\mc P}(\tilde D,\tilde X,\tilde Y)$.

By patching the measure restricted to the ${\mc P}(\tilde D,X,Y)$, we see that $\hat\mu_0$ is dominated by the restriction of $H_{\H}(\tilde X,\tilde Y)^h\mu^\sharp_\H$ to the preimage of ${\mc P}^s(\Sigma,X,Y)$ in ${\mc P}^s(\H,\tilde X,\tilde Y)$ (i.e. simple paths in $\H$ which stay simple when projected down on $\Sigma$). Consequently $|\hat\mu_0|=|\tilde\mu_0|\leq H_{\H}(\tilde X,\tilde Y)^h$.

It is easy to see that $H_\Sigma(X,Y)=\sum_{Y':\pi(Y')=Y}H_\H(\tilde X,\tilde Y)$ (starting e.g. from the Green function), where $\pi:\H\rightarrow\Sigma$ is the covering map, and the Poisson kernels are evaluated with consistent choices of local coordinates. Since $h>0$ and $H_\Sigma(X,Y)$ is smooth on ${\mc T}_{1,1}$, we conclude that, if $c\leq 0$, $(\Sigma,X,Y)\mapsto {\mc Z}_0(\Sigma,X,Y)$ is locally bounded on ${\mc T}_{1,1}$.

It will be convenient to have some basic {\em a priori} regularity estimate on ${\mc Z}_0$; let us show it is lower semicontinuous on ${\mc T}_{1,1}$. Let us choose a countable cover of ${\mc P}_0^s(\Sigma,X,Y)$ by ${\mc P}^s_0(D_i,X,Y)$ where the $D_i$'s are tube neighborhood which intersect $\partial\Sigma$ only in prescribed boundary arcs around $X,Y$.  We know that $\mu_0=\mu_0(\Sigma,X,Y)$ is a finite measure; so for $\eps>0$ there is $n$ large s.t. $\mu_0(\cup_{i=1}^n{\mc P}^s_0(D_i,X,Y))\geq {\mc Z}_0(\Sigma,X,Y)-\eps$. We can then find $Z\in\partial\Sigma$, $D^+$ a semidisk neighborhood of $Z$ s.t. $D_1,\dots,D_n$ are at positive distance of $D^+$. Then we can represent a neighborhood of the class of $(\Sigma,X,Y)$ in ${\mc T}$ by deformations of the gluing data of $D^+$ with $\Sigma\setminus\{Z\}$ (equivalently, by smooth variation of the metric in $D^+$). Then clearly for fixed $\gamma\in D_i$, $i\in\{1,\dots,n\}$, $\phi^{\Sigma'}_{D_i}(\gamma)$ is continuous in $\Sigma'$, $\Sigma'$ close to $\Sigma$ in ${\mc T}$; and $\phi^{\Sigma'}_{D_i}/\phi^\Sigma_{D_i}$ is locally bounded on ${\mc P}^s_0(D_i,X,Y)$. It follows that for $\Sigma'$ close enough to $\Sigma$, ${\mc Z}_0(\Sigma',X,Y)\geq {\mc Z}_0(\Sigma,X,Y)-2\eps$, as claimed.

As a by-product of this argument, let us observe that the family of measures $(\Sigma,X,Y)\mapsto\mu_0(\Sigma,X,Y)$ is regular in the following sense. Let us fix a smooth surface $\Sigma^s$ and a smooth parametric family of Riemannian metrics representing a neighborhood of the Teichm\"uller surface $\Sigma$ on ${\mc T}_{1,1}$. Then we may regard the $\mu_0$'s as a parametric family of measures on the same Lusin space ${\mc P}^s_0(\Sigma^s,X,Y)$. If $D_i$ is a tube neighborhood as above, $E_i$ a Borel subset of ${\mc P}^s(D_i,X,Y)$, we have that $(\Sigma,X,Y)\mapsto\mu_0^{(\Sigma,X,Y)}(E_i)$ is continuous. By monotone limit, if $E$ is any Borel set in ${\mc P}(\Sigma^s,X,Y)$, $(\Sigma,X,Y)\mapsto\mu_0^{(\Sigma,X,Y)}(E)$ is Borel measurable.

\subsection{Disintegration and null vector equation}

Having constructed, on the one hand, a Virasoro action by geometric arguments and, on the other hand, $\SLE$ partition functions, we finally relate these notions by showing that the said partition functions satisfy a null-vector equation (Theorem \ref{Thm:part}). 

A defining property of chordal $\SLE$ is its Domain Markov property. The measures we built from chordal $\SLE$ inherit a path decomposition property (as these are positive, rather than probability, measures, it is improper to speak of Markov property there).

We then show that a (non-canonical) $\SLE$-type diffusion on Teichm\"uller space is hypoelliptic (for general background on diffusions, we refer the reader to \cite{StroockVaradhan,RW2}). Recall that a differential operator ${\mc D}$ on a manifold of the form 
$${\mc D}=\frac 12 X^2+Y$$
where $X$ and $Y$ are vector fields (seen as derivations) is said to satisfy the H\"ormander bracket condition (e.g. \cite{Stroock_PDE}) if $X,Y$ and their iterated brackets $[X,Y],[X,[X,Y]]$, etc. span the tangent space at every point. If so, ${\mc D}$ is {\em hypoelliptic} in the sense that any weak solution of ${\mc D}h=0$ is smooth, i.e. if $h$ is a distribution s.t. ${\mc D}h=0$ (as distribution) on an open set, then $h$ is smooth there.

From the path decomposition identities and standard results on the Dirichlet problem for hypoelliptic operators \cite{Bony_max}, we obtain smoothness of the partition functions and the null-vector equation (Theorem \ref{Thm:part}). Finally, we discuss how that relates to results in \cite{BauBer_martVir,BB_CFTSLE,BB_part,Kyt_Virmod}.

\subsubsection{Disintegration}

The chordal $\SLE$ measure has a Markovian property, which is inherited by the canonical $\SLE$ measures ${\mu}_0(\Sigma,X,Y)$. We proceed with describing this property, starting in a somewhat restricted framework. We want to disintegrate the measure $\mu_0^\Sigma$ with respect to some initial slit (up to first exit of a semidisk neighborhood of $X$, say). On each tube neighborhood $D$, by the $\SLE$ Markov property, the reference $\SLE$ can be decomposed as the concatenation of a stopped chordal $\SLE$ trace $\gamma^\tau$ and a chordal $\SLE$ in the slit domain $D\setminus\gamma^\tau$. This Markov property has also a natural compatibility with the restriction property. Using the Markov property in each tube neighborhood, we will relate $\mu^\Sigma_0$ to the corresponding measures on $\Sigma\setminus\gamma^\tau$.

Consider a compact semidisk neighborhood $D^+$ of $X$ in $\Sigma$, not containing any other marked point, and identified via a local coordinate $z$ to a neighborhood of 0 in $\H$. For $\gamma\in{\mc P}^s_0(\Sigma,X,Y)$, we denote by $\gamma^\tau$ the initial segment of the path up to first exit of $\gamma$ and $\gamma_\tau\in\partial D^+$ its endpoint; one may think of $\gamma^\tau$ as an element of a stopped path space ${\mc P}_\tau^s(\Sigma,X,Y)$, with the topology of uniform convergence up to time reparameterization. The mapping $R:\gamma\mapsto\gamma^\tau$ is not everywhere continuous (because of paths bouncing back on $\partial D^+$, a null set), but is Borel measurable. We may consider the marked surface $\Sigma_\tau=(\Sigma\setminus\gamma^\tau,\gamma_\tau,Y)$; the mapping $\gamma^\tau\mapsto (\Sigma\setminus\gamma^\tau,\gamma_\tau,Y)$ is continuous ${\mc P}_\tau^s(\Sigma,X,Y)\rightarrow{\mc T}$.

There is a ``natural" local coordinate at $\gamma_\tau$ on $\Sigma_\tau$ defined from the reference local coordinate at $z$ at $X$ (which maps $D^+$ to a neighborhood of $0$ in $\H$): consider a conformal equivalence $g_\tau:\H\setminus z(\gamma^\tau)\rightarrow\H$ with $g_\tau(z)=z+O(1)$ as $z\rightarrow\infty$. Then set $z_\tau=g_\tau\circ z-g_\tau(z(\gamma_\tau))$, a local coordinate at $\gamma_\tau$. This allows to evaluate tensors such as $H_{\Sigma_\tau}(\gamma_\tau,Y)$.

Let us consider a tube neighborhood $D$; we do not require $D^+\subset D$. Let $D'\subset D$ be another tube neighborhood and set $D_\tau=D\setminus\gamma^\tau$ (and likewise for $D'_\tau$). The $\SLE$ restriction property, in its martingale formulation \cite{LSW3}, implies that:
$$\frac{dR_*\mu^\sharp_{D'}}{dR_*\mu^\sharp_{D}}(\gamma^\tau)=\ind_{\gamma\subset D'}\left(\frac{H_{D'_\tau}(\gamma_\tau,Y)H_{D}(X,Y)}{H_{D_\tau}(\gamma_\tau,Y)H_{D'}(X,Y)}\right)^h
\exp(\frac c2\nu_D(\gamma^\tau;D\setminus D'))
$$
The Domain Markov property of $\SLE$ states that, under $d\mu^\sharp_{D_\tau}(\gamma')dR_*\mu^\sharp_D(\gamma^\tau)$, the concatenation $\gamma^\tau\bullet\gamma'$ has law $\mu^\sharp_D$. Consequently, on ${\mc P}^s(D,X,Y)$, we have
\begin{align*}
\ind_{{\mc P}^s(D,X,Y)}d\mu_0^\Sigma(\gamma)
&=\phi^\Sigma_D(\gamma)d\mu^\sharp_D(\gamma)=H_D(X,Y)^h\exp(-\frac c2\nu_\Sigma(\gamma;\Sigma\setminus D))d\mu^\sharp_D(\gamma)\\
&=\left(\frac{H_D(X,Y)}{H_{D_\tau}(\gamma_\tau,Y)}\right)^h\exp(-\frac c2\nu_\Sigma(\gamma^\tau;\Sigma\setminus D))dR_*\mu^\sharp_D(\gamma^\tau)
d\mu_0^{\Sigma_\tau}(\gamma')
\end{align*}
We observe that the measures $\frac{\phi^\Sigma_D}{\phi^\Sigma_{D_\tau}}dR_*\mu^\sharp_D$ on ${\mc P}_\tau(D,X,Y)$ (the space of paths in the tube $D$ stopped upon exiting $D_+$) are consistent by the restriction property at time $\tau$, and as before (see \eqref{eq:locdef}) patch up to a measure on ${\mc P}_\tau(\Sigma,X,Y)$, which we denote by $\mu_\tau^\Sigma$. Then we have the disintegration:
$$\ind_{{\mc P}^s(D,X,Y)}d\mu_0^\Sigma(\gamma)=
\ind_{{\mc P}_\tau(D,X,Y)}(\gamma^\tau)d\mu_\tau^\Sigma(\gamma^\tau)\ind_{{\mc P}(D_\tau,\gamma_\tau,Y)}(\gamma')d\mu^{\Sigma_\tau}_0(\gamma')$$
where $\gamma=\gamma^\tau\bullet\gamma'$; and consequently
\begin{equation}\label{eq:disintegr}
d\mu_0^\Sigma(\gamma)=
d\mu_\tau^\Sigma(\gamma^\tau)d\mu^{\Sigma_\tau}_0(\gamma')
\end{equation}
on ${\mc P}_0^s(\Sigma,X,Y)$. This implies in particular that the collection of probability measures $\Sigma\mapsto\mu^\sharp_\Sigma={\mc Z}(\Sigma)^{-1}\mu_0^\Sigma$ have the same type of Markov property as chordal $\SLE$. By integrating out $\gamma'$, we obtain
$$dR_*\mu_0^\Sigma(\gamma^\tau)={\mc Z}_0(\Sigma_\tau,\gamma_\tau,Y)d\mu_\tau^\Sigma(\gamma^\tau)$$
and thus ${\mc Z}_0(\Sigma_0,X,Y)=\int {\mc Z}_0(\Sigma_\tau,\gamma_\tau,Y)d\mu_\tau^\Sigma(\gamma^\tau)$. The measure $\mu_\tau^\Sigma$ is absolutely continuous w.r.t. a stopped chordal $\SLE$. Let us write this in a local chart. Again let $z$ be a local coordinate at $X$ which maps $\tilde D^+$ to a neighborhood in $\H$ (with $D^+\subset\subset \tilde D^+$); $\mu^\sharp_\H$ is the standard chordal $\SLE$ measure, $R_*\mu^\sharp_\H$ its projection on paths stopped upon their first exit of $z(D^+)$, and $z^*R_*\mu^\sharp_\H$ its pullback to ${\mc P}_\tau(\Sigma_0)$. Then
$$d\mu^\Sigma_\tau(\gamma^\tau)=
\left(\frac{H_{\Sigma_\tau}(\gamma_\tau,Y)}{H_\Sigma(X,Y)}\right)^h\exp(-c(\nu_\Sigma(\gamma^\tau;\Sigma\setminus \tilde D^+)
-\nu_\H(z(\gamma^\tau);\H\setminus z(\tilde D^+))))
d\left(z^*R_*\mu^\sharp_\H\right)(\gamma^\tau)$$
where the Poisson kernels are evaluated w.r.t. $z,z_\tau$. If $z$ extends to a conformal equivalence $D\rightarrow\H$, this is by construction of $\mu^\Sigma_\tau$; otherwise it follows from change of coordinate rules for chordal $\SLE$.

It follows that under the standard chordal $\SLE$ measure,
\begin{equation}\label{eq:partmart}
{\mc Z}_0(\Sigma_0,X,Y)=\E\left(\exp(c(\nu_\Sigma(z^{-1}(\gamma^\tau);\Sigma\setminus \tilde D^+)
-\nu_\H(\gamma^\tau;\H\setminus z(\tilde D^+)))){\mc Z}_0(\Sigma_\tau,\gamma_\tau,Y)\right)
\end{equation}
Since this is valid for all surfaces $\Sigma$, the same argument may be applied between fixed (in the half-plane parameterization) times to see that:
$$t\mapsto \exp(c(\nu_\Sigma(z^{-1}(\gamma^{t\wedge\tau});\Sigma\setminus \tilde D^+)
-\nu_\H(\gamma^{t\wedge\tau};\H\setminus z(\tilde D^+)))){\mc Z}_0(\Sigma_{t\wedge\tau},\gamma_{t\wedge\tau},Y)$$
is a (bounded) martingale under the standard chordal $\SLE$ measure (provided that ${\mc Z}_0$ is locally bounded). Remark that by martingale representation, this is a.s. continuous in $t$.

\subsubsection{Hypoellipticity}

We wish to use this representation \eqref{eq:partmart} to show that ${\mc Z}_0$ is smooth on Teichm\"uller space. A slight issue is that if $\gamma$ is the trace of a standard $\SLE$ in $\H$, $t\mapsto(\Sigma\setminus z^{-1}(\gamma^t),z^{-1}(\gamma_t),Y)$ is not a diffusion in Teichm\"uller space. In order to invoke standard hypoellipticity regularity conditions (e.g. \cite{Stroock_PDE}), we need to change the reference measure to a mutually absolutely continuous diffusion (which is itself {\em a priori} non canonical and used only as a technical intermediate step).

Recall (see \eqref{eq:candiffop}) that we have defined a differential operator
$$\Delta_{2,1}:C^\infty(U,|T^{-1}\Sigma|^{\otimes h}\otimes{\mc L}^{\otimes c})\rightarrow C^\infty(U,|T^{-1}\Sigma|^{\otimes (h+2)}\otimes{\mc L}^{\otimes c})
$$
and we denote by $s=s_\zeta^c$ the reference section of ${\mc L}^{\otimes c}$.

\begin{Lem}\label{Lem:hypo}
Let $\omega s$ be a smooth positive local section of $|T^{-1}\Sigma|^{\otimes h}\otimes{\mc L}^{\otimes c}$ on $U\subset{\mc T}$. Assume that $\Delta_{2,1}(\omega s)=0$ on $U$. Then
\begin{enumerate}
\item Under the chordal $\SLE$ measure, 
$$t\mapsto M_t=\exp(c(\nu_\Sigma(z^{-1}(\gamma^{t\wedge\tilde\tau});\Sigma\setminus \tilde D^+)
-\nu_\H(\gamma^{t\wedge\tilde\tau};\H\setminus z(\tilde D^+))))\omega(\Sigma_{t\wedge\tilde\tau},\gamma_{t\wedge\tilde\tau},Y)$$
is a martingale, where $\tilde\tau=\tau\wedge\inf\{t\geq 0: \Sigma_t\notin U'\}$, $U'\subset\subset U$.
\item The martingale transform of the chordal $\SLE$ by $M$ is (up to exit of $U'$ and up to time change) a diffusion with generator
$${\mc G}=\frac\kappa 2(\omega s)^{-1}\Delta_{2,1}(\omega s)$$
\item ${\mc G}$ is hypoelliptic.
\end{enumerate}
\end{Lem}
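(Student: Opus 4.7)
All three parts follow once the chordal-SLE generator acting on sections of $|T^{-1}\Sigma|^{\otimes h}\otimes\mathcal{L}^{\otimes c}$ is identified with $\tfrac{\kappa}{2}\Delta_{2,1}$. Fix a local coordinate $z$ at $X$ mapping a neighborhood to a half-disk in $\mathbb{H}$; the Loewner equation with driving $W_t=\sqrt{\kappa}B_t$ moves $(\Sigma_t,\gamma_t,Y)$ instantaneously along the Witt vector fields $\ell_{-1}$ (Brownian, quadratic variation $\kappa\,dt$) and $2\ell_{-2}$ (deterministic Loewner drift), as is visible from the geometric descriptions at the end of Section \ref{Sec:Wittrep}. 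The Brownian-loop-measure factor appearing in $M_t$ is precisely the It\^o-level trivialization of the reference section $s=s_\zeta^c$ transported along the flow: by the loop-measure representation \eqref{eq:zetaloop} of $\zeta$-regularized determinants and the locality of the Polyakov--Alvarez anomaly (Theorem \ref{Thm:Alvarez}), it converts the $\ell_n$-action on $\omega$ into the $L_n$-action on $\omega s$ via the Schwarzian correction of Theorem \ref{Thm:Virrep} --- this is the ``neutral collection'' content of the definition \eqref{eq:Lndef}.

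\textbf{Parts (1) and (2).} With this identification in hand, applying It\^o to $M_t$ gives drift
\begin{align*}
M_t\cdot(\omega s)^{-1}\bigl(\tfrac{\kappa}{2}L_{-1}^2-2L_{-2}\bigr)(\omega s)\;=\;M_t\cdot(\omega s)^{-1}\cdot\tfrac{\kappa}{2}\Delta_{2,1}(\omega s),
\end{align*}
which vanishes by hypothesis; boundedness of $M_t$ on $U'\subset\subset U$ up to $\tilde\tau$ (all relevant quantities are continuous on the compact closure) upgrades the resulting local martingale to a true martingale, proving (1). For (2), Girsanov applied to the positive bounded martingale $M/M_0$ shifts the driving Brownian motion by $\sqrt{\kappa}\,\ell_{-1}(\log(\omega s))\,dt$; for a smooth test function $f$ on $\mathcal{T}$, expanding $L_{-1}^2$ and $L_{-2}$ on the product $(\omega s)f$ via Leibniz and cancelling the zeroth-order term $(\omega s)^{-1}\Delta_{2,1}(\omega s)\cdot f$ (which is $0$ by hypothesis) yields the new generator $\mathcal{G}f=\tfrac{\kappa}{2}(\omega s)^{-1}\Delta_{2,1}((\omega s)f)$.

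\textbf{Part (3).} Write $\mathcal{G}=\tfrac{1}{2}X^2+Y$ with $X=\sqrt{\kappa}\,\ell_{-1}$ and $Y=-2\ell_{-2}+\kappa\,\ell_{-1}(\log(\omega s))\,\ell_{-1}$. The Witt relations $[\ell_{-1},\ell_{-n}]=(n-1)\ell_{-n-1}$ propagate under iterated brackets $[X,Y],[X,[X,Y]],\ldots$ (modulo lower-order $\ell_{-1}$-multiples, which are harmless once $\ell_{-n}$ is already in the span) to produce the whole family $\{\ell_{-n}\}_{n\ge 1}$. The main obstacle is verifying that this family spans $T_p\mathcal{T}$ at each point $p$: positive-index $\ell_n$ ($n\ge 0$) represent vector fields extending holomorphically across $X$ and thus vanishing tangent vectors modulo adjustments of the other markings, so by the surjectivity in Virasoro uniformization \eqref{eq:Virunifcomp} the negative-index family alone covers $T_p\mathcal{T}$ generically; a dimensional count adapted to the specific marking data at hand (in the spirit of the explicit annular computations of \cite{Zhan_revwhole,Law_multdef}) closes the argument, yielding H\"ormander's bracket condition and hence the hypoellipticity of $\mathcal{G}$.
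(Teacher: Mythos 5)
Your high-level picture --- the loop-measure exponential is the transport of the reference section $s=s_\zeta^c$ along the Loewner flow, so that the Markov drift of $M_t$ is $(\omega s)^{-1}(\tfrac{\kappa}{2}L_{-1}^2-2L_{-2})(\omega s)$ --- is the right one, and your Girsanov derivation of $\mathcal{G}$ in part~(2) is essentially sound. For parts~(1)--(2) you take a genuinely different route than the paper: you apply It\^o/Girsanov directly to $M_t$, whereas the paper avoids this by a discrete-time approximation (alternating $\pm\sqrt{\kappa\delta}\,\ell_{-1}$ displacements with $-2\delta\,\ell_{-2}$ vertical-slit growth, then passing to the weak limit). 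The discrete scheme sidesteps the need to establish, up front, that $t\mapsto\omega(\Sigma_t,\gamma_t,Y)$ is a semimartingale and to identify its stochastic differential; your It\^o step asserts the conclusion of that identification without proving it. It is a plausible alternative but would need the semimartingale property and the exact correspondence between the Loewner SDE and the operators $\ell_{-1},\ell_{-2}$ made precise in a coordinate chart.

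The real gap is in part~(3). You write $\mathcal{G}=\tfrac12 X^2+Y$ with $X=\sqrt\kappa\,\ell_{-1}$ and $Y=-2\ell_{-2}+(\text{first order})$, and then propagate the Witt relation $[\ell_{-1},\ell_{-n}]=(n-1)\ell_{-n-1}$ through iterated brackets. But $\ell_{-1},\ell_{-2}$ are \emph{not} vector fields on the finite-dimensional Teichm\"uller space $\mathcal{T}$: as recalled in Section~\ref{Sec:Wittrep}, $\ell_n$ for $n<0$ only makes sense as a derivation from $C^\infty(U_k)$ into $C^\infty(U_{k'})$ for $k'>k$, i.e.\ on the projective limit $\mathcal{T}_{1^\infty}$. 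To run H\"ormander's theorem one must first choose a section $\sigma$ of $\mathcal{T}_{1^k}\to\mathcal{T}$ and work with the genuine vector fields $\ell_n^\sigma$ on $\mathcal{T}$, and these do \emph{not} satisfy the Witt relations exactly: $[\ell_m^\sigma,\ell_n^\sigma]=(m-n)\ell_{m+n}^\sigma$ only modulo a correction, and the whole technical content of the paper's proof of~(3) is the identification of that correction (as lying in $\langle\ell_m^\sigma,\dots,\ell_{-1}^\sigma\rangle$ for $m\le n\le 0$) so that the iterated brackets still span. Your parenthetical ``(modulo lower-order $\ell_{-1}$-multiples, which are harmless\dots)'' gestures at this but neither identifies the error nor justifies its harmlessness, and the appeal to a ``dimensional count adapted to the specific marking data\dots closes the argument'' is too vague to substitute for it. Also, ``covers $T_p\mathcal{T}$ generically'' is insufficient: H\"ormander's condition must hold at every point of the neighborhood under consideration, and the surjectivity of \eqref{eq:Virunifcomp} does give this pointwise, so you should drop ``generically'' and instead make the $\sigma$-dependence and the bracket error explicit.
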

As explained earlier, at time $t$ the tensor $\omega$ is evaluated w.r.t. the local coordinate $z_t$.
\begin{proof}
\begin{enumerate}
\item
Trivially $M$ is bounded (since we stop at $\tilde\tau$). From the Markov property of chordal $\SLE$ and the restriction property of the loop measures, it is enough to check the condition at a fixed time, i.e. $\E(M_t)=1$.
The null vector equation $\Delta_{2,1}(\omega s)=0$ gives $(\ell_{-2}+cS-\frac4\kappa\ell_{-1}^2)\omega=0$. In order to translate this in terms of the reference chart $\SLE$, it appears convenient to use a discrete time approximation.

Let $\delta>0$ be a small time step. We consider a piecewise continuous trace sampled as follows: move the marked point from $X$ to $X\pm\sqrt{\kappa\delta}$ with equal probability; grow a vertical slit of size $\sqrt{2\delta}$ at $X$; repeat. The horizontal and vertical displacements are defined in terms of the local coordinates $z_t$. These displacements are readily identified with a unit time flow along $\pm\sqrt{\kappa\delta}\ell_{-1}$, $-2\delta\ell_{-2}$ respectively (recall the end of Section \ref{Sec:Wittrep}).

It is then elementary to verify that, on the one hand, $\E^\delta(M_t)=1+o(1)$ (as $\delta\searrow 0$, by Taylor expansion); and on the other hand $\gamma^\delta$ converges weakly to chordal $\SLE$ (w.r.t. the Carath\'eodory topology on chains). This establishes the first point.

\item
For the second point, one may consider a test function $f$ (supported on $U'$, say). Here we need to be specific about multiplicative constants. We write $\Delta_{2,1}=L_{-1}^2-\frac 4\kappa L_{-2}$; choose a 1-jet $\tilde z$ of local coordinate at $X$ for each $(\Sigma,X)$; and set $\rho_s=\frac{d\tilde z_{(\Sigma_s,X_s)}}{dz_s}(\gamma_s)$. Extending the previous argument, one sees that
$$N_t=M_t\left(f(\Sigma_t)-\int_0^t\rho^{-2}_s{\mc G}f(\Sigma_s)ds\right)$$
is a martingale (omitting the stopping at $\tilde\tau$ for simplicity of notation). This gives 2. by the martingale problem characterization of diffusions (see e.g. \cite{StroockVaradhan}).

\item
Finally we need to check the H\"ormander bracket conditions for ${\mc G}$ (see e.g. \cite{Stroock_PDE}). We have chosen a section $\sigma$ of ${\mc T}_{1^k}\rightarrow {\mc T}$ ($k$ large enough). This allows to define $\ell_{-1}^\sigma$, $\ell_{-2}^\sigma$ as vector fields on ${\mc T}$. The generator ${\mc G}$ is of the form:
$${\mc G}=\frac\kappa 2(\ell^\sigma_{-1})^2-2\ell^\sigma_{-2}+b\ell_{-1}^\sigma$$
We have to show that the H\"ormander condition:
$$\ell_{-1}^\sigma,\ell_{-2}^\sigma,[\ell_{-1}^\sigma,\ell_{-2}^\sigma], 
[\ell_{-1}^\sigma,[\ell_{-1}^\sigma,\ell_{-2}^\sigma]],\dots
$$ 
span the tangent vector space to ${\mc T}$ at $(\Sigma_0,X_0)$. As usual we represent tangent vectors to ${\mc T}$ as Laurent vector fields in a semiannulus around the marked point. By definition $\ell_{-1}^\sigma$ is represented by $-\frac{\partial}{\partial_{z}}$, and $\ell_{-2}^\sigma$ is represented by $-z^{-1}\frac{\partial}{\partial_{z}}$ ($z$ is the local coordinate given by $\sigma$). 

Let us choose smooth local coordinates for ${\mc T}_{1^k}$ as follows: $u_1,\dots,u_d$ are coordinates on the base ${\mc T}$ and $a_0,\dots,a_k$ are coordinates on the fiber (e.g. the coefficients of the marked $k$-jet at $X$ w.r.t. to a reference local coordinate). For $f\in C^\infty(U_k)$ (i.e. $f$ depends smoothly on the marked surface and a $k$-jet at $X$), we have defined
$$(\ell_m f)(u_1,\dots,u_d,a_0,\dots,a_{k'})=\sum_{i=1}^dg_i(u_1,\dots,a_{k'})\partial_{u_i}f+\sum_{j=0}^kh_j(u_1,\dots,a_{k'})\partial_{a_j}f$$
($k'=k+m^-$) so that the Witt commutation relations $[\ell_m,\ell_n]=(m-n)\ell_{m+n}$ are satisfied. 

For $f\in C^\infty(U)$, we have by construction
$$(\ell_m^\sigma f)(u_1,\dots,u_d)=\sum_{i=1}^d
g_i(u_1,\dots,u_d,\sigma_0(u),\dots,\sigma_{k'}(u))\partial_{u_i}f%
$$
where the $\sigma_i$'s are the coordinates of the chosen section of ${\mc T}_{1^k}\rightarrow{\mc T}$ and $u=(u_1,\dots,u_d)$. From this expression of $\ell_m^\sigma$ in coordinates, it is clear that for $f\in C^\infty(U)$
\begin{equation}\label{eq:lemhypo0} 
[\ell_m^\sigma,\ell^\sigma_n]f=(m-n)\ell_{m+n}^\sigma f\mod \langle\partial_{a_0}\ell_mf,\partial_{a_0}\ell_nf,\dots,\partial_{a_{k'}}\ell_mf,\dots,\partial_{a_{k'}}\ell_nf\rangle
\end{equation}
(with a slight abuse of notation, as the RHS is evaluated at the jet $\sigma(u)$). By this we mean that there are smooth functions $v_0,\dots,w_{k'}$ (independent of $f$) s.t.
$$[\ell_m^\sigma,\ell^\sigma_n]f=(m-n)\ell_{m+n}^\sigma f+v_0\partial_{a_0}\ell_mf+\cdots+w_{k'}\partial_{a_{k'}}\ell_nf$$
If $m\in\Z$ and $n\geq 0$,
$$(m-n)\ell_{m+n}f=[\ell_m,\ell_n]f=-\ell_n\ell_mf\in\langle\partial_{a_0}\ell_mf,\dots,\partial_{a_{n}}\ell_mf\rangle$$
We observe that vertical vector fields which preserve $j$-jets are spanned (over $C^\infty(U)$) by $\partial_{a_j},\dots,\partial_{a_k}$ or alternatively by $\ell_j^\sigma,\dots,\ell_k^\sigma$. It follows that
$$\langle\partial_{a_0}\ell_mf,\dots,\partial_{a_{n}}\ell_mf\rangle=\langle \ell_m^\sigma f,\dots,\ell_{m+n}^\sigma f\rangle$$
Then from \eqref{eq:lemhypo0}, if $m\leq n\leq 0$,
$$[\ell_m^\sigma,\ell^\sigma_n]=(m-n)\ell_{m+n}^\sigma\mod \langle\ell_m^\sigma,\ell_{m+1}^\sigma,\dots,\ell_{-1}^\sigma\rangle$$
In particular, the span of 
$$\ell_{-1}^\sigma,\ell_{-2}^\sigma,[\ell_{-1}^\sigma,\ell_{-2}^\sigma], 
[\ell_{-1}^\sigma,[\ell_{-1}^\sigma,\ell_{-2}^\sigma]],\dots
$$
is the span of
$$\ell_{-1}^\sigma,\ell_{-2}^\sigma,\ell_{-3}^\sigma,\ell_{-4}^\sigma,\dots$$
Then by Virasoro uniformization we have that for $n$ large enough $\ell_{-1}^\sigma,\ell_{-2}^\sigma,\dots,\ell_{-n}^\sigma$ spans the tangent space $T_\Sigma{\mc T}$ (since the map \eqref{eq:Virunifcomp} is surjective). 

Roughly speaking, we used the section $\sigma$ to replace the infinite-dimensional vector fields $\ell_n$'s by the finite-dimensional $\ell_n^\sigma$'s (amenable to H\"ormander's condition); by doing so this we lose the Witt commutation relations $[\ell_m,\ell_n]=(m-n)\ell_{m+n}$ but retain them ``up to lower order error", which is enough to conclude.
\end{enumerate}
\end{proof}

\subsubsection{Smoothness}

Let ${\mc G}$ be as in the lemma. From \cite{Bony_max} it follows that each point of ${\mc T}$ has a basis of neighborhoods in which the Dirichlet problem with continuous boundary data is uniquely solvable for the operator ${\mc G}+V$, $V$ a smooth bounded potential; moreover the solutions are smooth and satisfy a Harnack inequality. (Note that $\ell_{-1}^\sigma$ is nowhere vanishing provided sufficiently many spectator points are marked). This still holds if $\omega s$ is simply a smooth positive section (without assuming the null vector equation).

Let us start from an arbitrary smooth positive section $\omega_0 s$. Then ${\mc G}_0=\frac\kappa 2(\omega_0 s)^{-1}\Delta_{2,1}(\omega_0 s)$ has a zeroth order term $V$. If $U$ is a small enough such neighborhood (depending on $V$), we can solve the Dirichlet problem with arbitrary continuous positive boundary data; this gives $f\in C^\infty(U)$ a smooth positive section so that $\Delta_{2,1} (f\omega_0 s)=0$ there. We consider the diffusion and generator ${\mc G}$ associated to $(f\omega_0s)$. Now we also have unique smooth solutions and a Harnack inequality for solutions of the Dirichlet problem for ${\mc G}$ in $U$ with continuous boundary data (for well-chosen neighborhoods forming a basis). Let $P$ denotes the Poisson operator for ${\mc G}$ on $U$, i.e. if $g$ is continuous on $\partial U$, $Pg$ is continuous on $\bar U$, $(Pg)_{\partial U}=g$ and ${\mc G}(Pg)=0$ in $U$; in particular $Pg$ is smooth in $U$.

Let $U$ be such a small enough neighborhood and $\tau$ be the first exit time of $U$. By a barrier function argument, one sees that $\tau$ is a.s. finite with exponential tails. As is well-known (e.g. \cite{StroockVaradhan}), it then follows from Dynkin's formula and optional stopping that we have a probabilistic representation for $P$: $(Pg)(\Sigma)=\E_{\mc G}^\Sigma(g(\Sigma_\tau))$, where the expectation refers to the diffusion measure with generator ${\mc G}$ started from the state $\Sigma=(\Sigma,X,\dots,)$.

Comparing \eqref{eq:partmart} with Lemma \ref{Lem:hypo}, we see that
$$t\mapsto \frac{{\mc Z}_0}{f\omega_0}(\Sigma_t,\gamma_t,Y)$$
is a martingale (at least up to exit of $U$) and consequently
$$ \frac{{\mc Z}_0}{f\omega_0}(\Sigma)=\E_{\mc G}^\Sigma\left( \frac{{\mc Z}_0}{f\omega_0}(\Sigma_\tau)\right)$$
(for brevity $\Sigma=(\Sigma,X,Y,\dots)$).

We know a priori that ${\mc Z}_0$ is lower semicontinuous (Section \ref{ssec:partfun}). We may thus represent it as the monotone increasing limit of continuous functions ${\mc Z}_n$ (e.g. by taking ${\mc Z}_n$ to be the largest $n$-Lipschitz minorant of ${\mc Z}$). Then by monotone limit we have 
$$ \frac{{\mc Z}_0}{f\omega_0}(\Sigma)=\lim_n\E_{\mc G}^\Sigma\left( \frac{{\mc Z}_n}{f\omega_0}(\Sigma_\tau)\right)=\lim_n P({\mc Z_n}/f\omega_0s)(\Sigma)$$
for $\Sigma\in U$. From the Harnack inequality \cite{Bony_max}, it follows that ${\mc Z}_0$ is, say, Lipschitz (since $P({\mc Z}_n/f\omega_0)$ is uniformly Lipschitz on compact subsets of $U$). Consequently, ${\mc Z}_0/f\omega_0=P(({\mc Z}_0/f\omega_0)_{|\partial U})$ and then ${\mc Z}_0$ is smooth and satisfies:
$${\mc G}\frac{{\mc Z}_0}{f\omega_0}=0$$
on ${\mc T}$ in the classical sense, i.e. $\Delta_{2,1}({\mc Z}_0s)=0$.

We can now state the main result on the partition function of $\SLE$ measures.

\begin{Thm}\label{Thm:part}
Let $(\Sigma,X,Y,\dots)$ be a bordered Riemann surface with at least two marked points on the boundary and $\gamma_0$ a simple path from $X$ to $Y$. Let $\mu_0^\Sigma$ be the $\SLE$ measure on paths homotopic to $\gamma_0$ on $\Sigma$ and ${\mc Z}_0(\Sigma)=\|\mu_0^\Sigma\|$ its partition function; and $s$ be the reference section of ${\mc L}^{\otimes c}$. If ${\mc Z}_0$ is locally bounded, then ${\mc Z}_0s$ is a smooth section of $|T^{-1}\Sigma|^{\otimes h}\otimes {\mc L}^{\otimes c}$ and satisfies the null vector equation
$$\Delta_{2,1}({\mc Z}_0s)=0$$
\end{Thm}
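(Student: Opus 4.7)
The plan is to combine the disintegration identity \eqref{eq:disintegr}/\eqref{eq:partmart} with Lemma \ref{Lem:hypo} and the Bony Dirichlet theory for hypoelliptic operators, using the a priori lower semicontinuity of ${\mc Z}_0$ established in Section \ref{ssec:partfun} as the bridge between the probabilistic and analytic parts.

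First I would fix a point $\Sigma_*\in{\mc T}$ and choose an arbitrary smooth positive local section $\omega_0 s$ of $|T^{-1}\Sigma|^{\otimes h}\otimes{\mc L}^{\otimes c}$ on a neighborhood of $\Sigma_*$. The normalized operator ${\mc G}_0=\frac\kappa 2(\omega_0 s)^{-1}\Delta_{2,1}(\omega_0 s)$ differs from the canonical generator appearing in Lemma \ref{Lem:hypo} only by a bounded smooth potential $V$; on a small enough neighborhood $U$ (a member of a basis produced by \cite{Bony_max}) the Dirichlet problem for ${\mc G}_0+V$ with positive continuous boundary data is uniquely solvable. Pick any such positive boundary datum and let $f$ be the resulting positive smooth solution, so that $\Delta_{2,1}(f\omega_0 s)=0$ on $U$. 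By Lemma \ref{Lem:hypo} applied to $\omega=f\omega_0$, the diffusion on ${\mc T}$ driven by the chordal SLE and tilted by the bounded martingale $M_t$ (stopped at the first exit $\tau$ of a relatively compact $U'\subset\subset U$) has hypoelliptic generator ${\mc G}=\frac\kappa 2(f\omega_0 s)^{-1}\Delta_{2,1}(f\omega_0 s)$, and the associated Poisson operator $P$ sends continuous boundary data to smooth solutions in $U$ satisfying a Harnack inequality.

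The key probabilistic input is then that \eqref{eq:partmart}, combined with the vanishing of $\Delta_{2,1}(f\omega_0 s)$ and the path decomposition \eqref{eq:disintegr}, shows that ${\mc Z}_0/(f\omega_0)$ is, under the ${\mc G}$-diffusion, a martingale stopped at $\tau$; hence
\[
\frac{{\mc Z}_0}{f\omega_0}(\Sigma)=\E_{\mc G}^\Sigma\!\left(\frac{{\mc Z}_0}{f\omega_0}(\Sigma_\tau)\right),\qquad\Sigma\in U.
\]
Here the local boundedness of ${\mc Z}_0$ (which on $\{c\le 0\}$ was established in Section \ref{ssec:partfun}) is used to justify optional stopping, while $\tau$ has exponential tails by a barrier argument. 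The right-hand side is a priori not $P(({\mc Z}_0/f\omega_0)_{|\partial U})$ because ${\mc Z}_0$ is only known to be lower semicontinuous; to remedy this I would write ${\mc Z}_0$ as the increasing limit of continuous (e.g.\ $n$-Lipschitz) minorants ${\mc Z}_n$, apply the representation to each $P({\mc Z}_n/f\omega_0)$, and pass to the monotone limit. The uniform Harnack inequality on compact subsets of $U$ then forces ${\mc Z}_0/(f\omega_0)$ to be locally Lipschitz, hence continuous up to $\partial U$, hence equal to $P(({\mc Z}_0/f\omega_0)_{|\partial U})$, and therefore smooth in $U$ with ${\mc G}({\mc Z}_0/f\omega_0)=0$. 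Unwinding the definition of ${\mc G}$ and the Leibniz rule for $\Delta_{2,1}$ (which is implicit in Theorem \ref{Thm:Virrep}) yields $\Delta_{2,1}({\mc Z}_0 s)=0$ on $U$, and since $\Sigma_*$ was arbitrary this holds throughout ${\mc T}$.

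The main obstacle I expect is the step where one upgrades the a priori weak regularity of ${\mc Z}_0$ to continuity and then smoothness: lower semicontinuity alone does not allow one to identify ${\mc Z}_0/(f\omega_0)$ with the harmonic extension of its boundary trace, so one must control both the monotone approximants ${\mc Z}_n$ and the behavior of $\Sigma_\tau$ near $\partial U$, and then invoke the Bony Harnack estimate uniformly. A secondary (but essentially bookkeeping) difficulty is to make sure that tensorial weights at $X,Y$ and the reference section $s=s_\zeta^c$ transform correctly under the choice of the local coordinate $z_t$ used in Lemma \ref{Lem:hypo}, so that the probabilistic martingale identity \eqref{eq:partmart} genuinely corresponds to the analytic equation $\Delta_{2,1}({\mc Z}_0 s)=0$ rather than to that equation modified by an extraneous coordinate factor.
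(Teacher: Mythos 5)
Your proposal follows the paper's own proof essentially verbatim: solve the hypoelliptic Dirichlet problem via \cite{Bony_max} to obtain a smooth positive local solution $f\omega_0 s$ of the null vector equation, use Lemma \ref{Lem:hypo} and \eqref{eq:partmart} to identify ${\mc Z}_0/(f\omega_0)$ as a martingale of the tilted diffusion, and then upgrade lower semicontinuity of ${\mc Z}_0$ to Lipschitz continuity (hence smoothness) via monotone Lipschitz minorants and the Bony Harnack inequality. This is exactly the argument given in Section 5.4; no genuinely different route is taken.
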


\subsubsection{Local martingales}\label{sssec:locmart}

Let ${\mc Z}$ be any smooth positive (local) solution of the null-vector equation $\Delta_{2,1}({\mc Z}s)=0$ (examples of such solutions are given in Theorem \ref{Thm:part}). By Lemma \ref{Lem:hypo}, there is a hypoelliptic diffusion $(\Sigma_t)_{t\geq 0}$ (defined at least locally on Teichm\"uller space) with generator
$${\mc G}f=\frac\kappa 2({\mc Z}s)^{-1}\Delta_{2,1}(f{\mc Z}s)$$
Let us fix such a ${\mc Z}$, defined in some open set in Teichm\"uller space. 
By Dynkin's formula, if $f$ is s.t. ${\mc G}f=0$, then $t\mapsto f(\Sigma_t)$ is a local martingale. The reader will have noticed that ${\mc G}f$ depends on a 1-jet at the seed (via a multiplicative constant); correspondingly, $(\Sigma_t)$ is defined only up to time change, unless one specifies a choice of section of ${\mc T}_{1^1}\rightarrow{\mc T}$ as in Lemma \ref{Lem:hypo} (such a choice is in general non-canonical). The class of local martingales is invariant under such time change. 

By construction $f\equiv 1$ gives a solution of ${\mc G}f=0$. We now observe that families of local martingales may be generated using a commuting Virasoro representation, as in  
Section \ref{ssec:commrep}. 

Specifically, let $X$ denote the position of the $\SLE$ seed and $Y$ be another marked point (either the target or just a spectator point). Then we denote $(L_n^X)_{n\in\Z}$ (resp. $(L_n^Y)_{n\in\Z}$) the images of the Virasoro generators operating by deformation at $X$ (resp. $Y$). Then $\Delta_{2,1}^X({\mc Z}s)=0$ by assumption and
$[L_n^X,L_m^Y]=0$ for all $m,n\in\Z$. Let $L^Y$ be a word in the $L_m^Y$'s (with $m\leq 0$) and $f$ s.t. ${\mc G}f=0$. Then trivially
$$\Delta_{2,1}^X(L^Y(f{\mc Z}_s))=L^Y(\Delta_{2,1}^X(f{\mc Z}s))=0$$
i.e. $L^Yf\stackrel{def}{=}({\mc Z}s)^{-1}(L^Y(f{\mc Z}s))$ also satisfies ${\mc G}(L^Yf)=0$. Remark that $f$ depends on a $k$-jet at $Y$, where $k$ is the degree of $L^Y$; however, since the evolution is at $X$, one can simply choose a jet at $t=0$ and keep it fixed as $t$ increases. Note also that even starting from the trivial solution $f\equiv 1$, one thus generates a hierarchy of non-trivial local martingales $L^Y1$ indexed by elements $L$ in ${\mc U}(\Vir^{-})$.

This generalizes the Virasoro action on $\SLE$ local martingales introduced and analyzed in \cite{BauBer_martVir,BB_CFTSLE,BB_part,Kyt_Virmod} (recall the discussion in Section \ref{ssec:BB}).

\subsection{Multiple $\SLE$s}

The same method may be implemented in order to construct measures on $n$-tuples of paths connecting pairs of boundary points (multiples $\SLE$s) or a boundary point with a bulk point (radial case). Let us start with multiple $\SLE$s; the simply-connected case and some elements of the multiply-connected case were analyzed in \cite{Dub_Comm}.

We consider again a bordered surface $(\Sigma,X_1,\dots,X_n,Y_1,\dots,Y_n)$ with $n$ pairs $(X_i,Y_i)_{1\leq i\leq n}$ of marked boundary points (one may also keep track of additional marked points on the boundary or in the bulk). We want to construct an $\SLE$ measure on $n$-tuples of simple paths connecting these pairs. The state space is thus $\prod_{1\leq i\leq n}{\mc P}^s(\Sigma,X_i,Y_i)$ (or the subset consisting of disjoint paths). Let us fix $(\gamma^0_1,\dots,\gamma^0_n)$ a reference $n$-tuple of such disjoint paths; ${\mc P}^s_0(\Sigma,X_1,\dots,Y_n)$ consists of disjoint simple paths in $\Sigma$ jointly homotopic to $(\gamma^0_1,\dots,\gamma^0_n)$.

The path space ${\mc P}^s_0(\Sigma,X_1,\dots,Y_n)$ is covered by relatively open sets of the form:
$$\prod_{1\leq i\leq n}{\mc P}(D_i,X_i,Y_i)$$
where $D_i$ is a simply-connected domain containing semidisk neighborhoods of $X_i$ and $Y_i$ and no other marked point in its closure; we also require that the $\overline{D_i}$'s are pairwise disjoint; and we set $D=\sqcup_i D_i$. In a straightforward extension of the chordal case, in order to define $\mu_0$ a measure on ${\mc P}^s_0(\Sigma)$, it is enough to exhibit densities $\phi_D^\Sigma$ s.t.
$$(\prod_i\ind_{\gamma_i\subset D_i})d\mu_0(\gamma_1,\dots,\gamma_n)=\phi_D^\Sigma(\gamma_1,\dots,\gamma_n)\prod_id\mu^\sharp_{D_i}(\gamma_i)$$
satisfying the consistency condition (compare with \eqref{eq:consist})
\begin{equation}\label{eq:consistmult}
\frac{\phi_{D}^\Sigma}{\phi_{D'}^\Sigma}(\gamma)
=
\prod_i\frac{d\mu^\sharp_{D'_i}}{d\mu^\sharp_{D_i}}(\gamma_i)
\end{equation}
where $D'=\sqcup_i D'_i\subset D$ and $\gamma=(\gamma_1,\dots,\gamma_n)\in\prod_i{\mc P}^s(D'_i)$. 

A ``natural" solution is given by the specification:
$$\phi_D^\Sigma(\gamma)=(\prod_{1\leq i\leq n}H_{D_i}(X_i,Y_i)^h)
\exp(-\frac c2\nu_\Sigma (\cup\gamma_i;\Sigma\setminus D))$$
(compare with \eqref{eq:candens}).

We denote by $\mu_{(\Sigma,X_1,\dots,Y_n)}$ the resulting measure, viz. the one that restricts to $\phi_D\prod_i\mu_{D_i}^\sharp$ on ${\mc P}_0^s(\sqcup_i D_i)$. Then $\mu$ is a positive measure on ${\mc P}^s_0(\Sigma)$; let us denote ${\mc Z}_0={\mc Z}_0(\Sigma,X_1,\dots,Y_n)$ its total mass (which has a $h$-tensor dependence in local coordinates at $X_1,\dots,Y_n$). If $c\leq 0$, we see immediately that ${\mc Z}_0$ is locally bounded.

Alternatively, one may also want to use $\mu_{(\Sigma,X_1,Y_1)}\otimes \cdots\otimes \mu_{(\Sigma,X_n,Y_n)}$ as a reference measure. Observe that
\begin{align*}
\sum_i\nu_\Sigma(\gamma_i,\Sigma\setminus D_i)-\nu_\Sigma(\cup\gamma_i;\Sigma\setminus D)
&=\int(\ind_{\gamma_1\cap\delta\neq\varnothing}+\cdots+\ind_{\gamma_n\cap\delta\neq\varnothing}-1)d\nu_\Sigma(\delta)\\
&=\sum_{j=2}^n\nu_\Sigma(\gamma_j;\cup_{i<j}\gamma_i)
\end{align*}
is independent of the choice of $D_i$'s (and the last expression is invariant under relabelling of the paths). Then
\begin{align*}
\mu_{(\Sigma,X_1,\dots,Y_n)}&=\ind_{{\mc P}_0^s}\exp\left(\frac c2\sum_{j=2}^n\nu_\Sigma(\gamma_j;\cup_{i<j}\gamma_i)\right)\prod_i\mu_{(\Sigma,X_i,Y_i)}
\end{align*}
which in the simply-connected case appears in \cite{Dub_Comm,Dub_Euler}. We can disintegrate w.r.t. $(\gamma_1,\dots,\gamma_{n-1})$:
\begin{align*}
d\mu_{(\Sigma,X_1,\dots,Y_n)}(\gamma_1,\dots,\gamma_n)&=\ind_{{\mc P}_0^s}\left(e^{\frac c2\sum_{j=2}^{n-1}\nu_\Sigma(\gamma_j;\cup_{i<j}\gamma_i)}\prod_{i<n}d\mu_{(\Sigma,X_i,Y_i)}(\gamma_i)\right)
\left(e^{\frac c2\nu_\Sigma(\gamma_j;\cup_{i<n}\gamma_n)}d\mu_{(\Sigma,X_n,Y_n)}(\gamma_n)\right)\\
&=\ind_{{\mc P}_0^s}\left(e^{\frac c2\sum_{j=2}^{n-1}\nu_\Sigma(\gamma_j;\cup_{i<j}\gamma_i)}\prod_{i<n}d\mu_{(\Sigma,X_i,Y_i)}(\gamma_i)\right)
d\mu_{(\Sigma\setminus\cup_{i<n}\gamma_i,X_n,Y_n)}(\gamma_n)%
\end{align*}
where the second line is by the restriction property; hence the disintegration of the measure on $n$-paths w.r.t. $n-1$ of them is proportional to a canonical measure on the last path on the random surface $\Sigma\setminus\cup_{i<n}\gamma_i$. By integrating separately over $\gamma_n$, we obtain in particular
$${\mc Z}_0(\Sigma,X_1,\dots,Y_n)=\int
\ind_{{\mc P}_0^s}\left(e^{\frac c2\sum_{j=2}^{n-1}\nu_\Sigma(\gamma_j;\cup_{i<j}\gamma_i)}\right)
{\mc Z}_0(\Sigma\setminus\cup_{i<n}\gamma_i,X_n,Y_n)
\prod_{i<n}d\mu_{(\Sigma,X_i,Y_i)}(\gamma_i)
$$

In turn we know how to disintegrate $d\mu_{(\Sigma\setminus\cup_{i<n}\gamma_i,X_n,Y_n)}(\gamma_n)$ w.r.t. $\gamma_n^\tau$, the path $\gamma_n$ up to first exit of a semidisk neighborhood of $X_n$ (see \eqref{eq:disintegr}). Let $z$ be a local coordinate at $X_n$; we find again that
$$t\mapsto \exp(c(\nu_\Sigma(z^{-1}(\gamma^{t\wedge\tau});\Sigma\setminus \tilde D^+)
-\nu_\H(\gamma^{t\wedge\tau};\H\setminus z(\tilde D^+)))){\mc Z}_0(\Sigma_{t\wedge\tau},X_1,\dots,X_{n-1},\gamma_{t\wedge\tau},Y_1,\dots,Y_n)$$
is a bounded martingale under the standard chordal SLE measure in the upper half-plane (we work under the assumption that ${\mc Z}_0$ is locally bounded). As in the $n=1$ case, this implies that
$$\Delta_{2,1}({\mc Z}_0(\Sigma,X_1,\dots,X_n,Y_1,\dots,Y_n)s)=0$$
where $\Delta_{2,1}$ is the canonical differential operator corresponding to a perturbation at $X_n$ (and as before $s$ is the reference section of ${\mc L}^{\otimes c}$).

\appendix

\section{Analytic surgery}\label{sec:surgery}

In this appendix, we discuss the analytic surgery results of Forman (\cite{Forman_surgery}) and Burghelea-Friedlander-Kappeler (\cite{BFK}). These pertain to the effect of cutting and pasting manifolds on the determinants of the Laplacians (or elliptic operators), and can be understood in terms of Dirichlet space decompositions. We give a new proof based on probabilistic arguments, which extends arguments presented in \cite{Dub_SLEGFF}. This will be needed to analyze the variation of determinants under deformations prescribed by Witt algebra elements \eqref{eq:lndef}. 

Let $(M,g)$ be a Riemannian manifold, possibly with boundary (and boundary conditions). Consider a codimension one manifold (a simple loop in dimension 2) $\delta$ in $M$, that splits $M$ in two manifolds $M_l$, $M_r$ with induced metrics. The Neumann jump operator corresponding to this situation is defined as follows:\\
given $\phi\in C^\infty(\delta)$, $\phi_l$ (resp. $\phi_r$) is the solution of the Dirichlet problem $(\phi_l)_{|\delta}=\phi$ on $M_l$ (resp. $M_r$). Then:
\begin{equation}\label{eq:neujump}
N\phi=\partial_{n_l}\phi_l+\partial_{n_r}\phi_r
\end{equation}
where $\partial_{n_l}$, $\partial_{n_r}$ are outward pointing normal derivatives in $M_l$, $M_r$. The jump operator is a positive, first-order pseudodifferential operator on  $C^\infty(\delta)$. In the case where the Laplacian on $M$ has a non-trivial kernel, so has $N$, and these kernels are canonically identified. 

It turns out that the $\zeta$-function associated with $N$ has a meromorphic continuation to a neighborhood of $0$, which defines ${\det}_\zeta(N)$.
We can now phrase:

\begin{Thm}\label{Thm:surgery}
The following relation holds:
$${\det}_\zeta(-\Lap)=C_\delta{\det}_\zeta(-\Lap)_l{\det}_\zeta(-\Lap)_r{\det}_\zeta(N)$$
where the constant $C_\delta$ depends only on the metric in a neighborhood of $\delta$ in $M$ (if $\Lap$ has a trivial kernel).
\end{Thm}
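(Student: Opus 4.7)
The plan is to base the proof on the loop measure representation of the $\zeta$-determinant \eqref{eq:zetaloop}, and to decompose the loop measure on $M$ according to whether loops cross $\delta$. By the restriction property (Proposition \ref{Prop:loopmeas}(1)), loops that do not meet $\delta$ split as a disjoint union of loops in $M_l$ and loops in $M_r$, so that
$$\Gamma(s)(\zeta_M-\zeta_{M_l}-\zeta_{M_r})(s)=\int_{\gamma\cap\delta\neq\varnothing}T(\gamma)^s\,d\nu_M(\gamma).$$
The remaining task is to identify the right-hand side at $s=0$ (after meromorphic continuation) with $\log{\det}_\zeta(N)+\log C_\delta$.

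For this I would use excursion theory. Brownian motion $X$ on $M$ carries a boundary local time $L_t$ at the hypersurface $\delta$; for $\nu_M$-a.e.\ loop $\gamma$ touching $\delta$ the total local time $L(\gamma)$ is positive, and a rerooting by the kernel $dL/L(\gamma)$ (a variant of the rerooting identity used in the proof of Proposition \ref{Prop:loopmeas}) reduces the integral to a loop measure on loops based on $\delta$. The time-changed process $Y_s=X_{\tau_s}$, where $\tau_s=\inf\{t:L_t>s\}$, is a Feller process on $\delta$ whose generator is, up to a normalization absorbed in $C_\delta$, the Neumann jump operator $N$ of \eqref{eq:neujump}: the one-step transition from either side is given by the corresponding Poisson kernel, whose normal derivative is the one-sided Dirichlet-to-Neumann operator, and $N$ is the sum of the two. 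The strong Markov property of $X$ then gives a skew-product decomposition of each crossing loop into a ``skeleton'' loop of $Y$ on $\delta$ together with independent excursions from $\delta$ into $M_l\cup M_r$. Applying \eqref{eq:zetaloop} on $\delta$ to the operator $N$ produces the factor ${\det}_\zeta(N)$, while the excursion factor involves only the heat kernel in a collar neighborhood of $\delta$ and therefore contributes $C_\delta$.

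The main obstacle is to make this excursion factorization precise at the level of the $\zeta$-functions rather than merely at the formal level of the loop measure: loops that cross $\delta$ many times in a short interval give divergent contributions that must be regularized consistently on the two sides of the identity, and one has to check that the meromorphic continuations match in a neighborhood of $s=0$. The cleanest route is to mimic the locality argument used after Theorem \ref{Thm:Alvarez}. Specifically, take two pairs $(M^{(i)},\delta)$, $i=1,2$, that coincide in a fixed collar $U$ of $\delta$, and consider the difference
$$\bigl[\log{\det}_\zeta(-\Lap_{M^{(1)}})-\log{\det}_\zeta(-\Lap)_{l}^{(1)}-\log{\det}_\zeta(-\Lap)_{r}^{(1)}-\log{\det}_\zeta(N^{(1)})\bigr]-\bigl[(1)\leftrightarrow (2)\bigr].$$
In the loop representation only loops that straddle $\partial U$ (and hence see the ambient geometry) can contribute to this difference; their total mass is finite by a short-time Varadhan large deviation bound as in \eqref{eq:pleijel}, so the contribution to the $s=0$ evaluation vanishes. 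This establishes locality of $C_\delta$ and reduces its determination to an explicit computation in a flat collar model, where all four $\zeta$-determinants can be compared directly.
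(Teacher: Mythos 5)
Your proposal follows essentially the same route as the paper's proof: express $\Gamma(s)(\zeta-\zeta_l-\zeta_r)(s)$ as an integral of $T^s$ over $\delta$-crossing loops, reroot by local time to identify the trace process on $\delta$ as a Markov process with generator $-N$ (hence $\Gamma(r)\zeta_N(r)=\int_{\mathrm{cross}}\ell^r\,d\nu$), and conclude by locality. One place where you and the paper diverge slightly in emphasis: rather than attempting a skew-product factorization of each crossing loop into a $\delta$-skeleton plus independent excursions (which, as you correctly note, is delicate to regularize at the $\zeta$-function level), the paper compares the two loop integrals directly -- the one weighted by duration $T^s$ and the one weighted by local time $\ell^r$ -- and observes that macroscopic loops (those exiting a collar $\delta^\eps$) have finite $\nu$-mass and so contribute equally at $s=r=0$, while loops confined to $\delta^\eps$ produce the local constant $C_\delta$. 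Your fallback locality argument, comparing two manifolds coinciding on a collar of $\delta$ and noting that only collar-straddling loops (of finite mass) can distinguish them, is logically equivalent and would complete the proof; it is the same mechanism as the paper's small/macroscopic split, just packaged as a comparison of two geometries rather than a decomposition of one. The only other small nuance: the paper fixes the normalization of the local time by adjusting the collar coordinates so that the trace process has generator exactly $-N$ (no extra factor), whereas you absorb a normalization into $C_\delta$; either is fine.
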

\begin{proof}
The idea is that $\log({\det}_\zeta(-\Lap)/{\det}_\zeta(-\Lap)_l{\det}_\zeta(-\Lap)_r)$ and $\log({\det}_\zeta(N))$ correspond to two ways of counting loops on $M$ that cross $\delta$: by duration and by local time at $\delta$.

Consider the loop measure $\nu$ relative to the Laplacian on $M$. It restricts to the loop measures on $M_l$, $M_r$. Expressing $\zeta$-functions in terms of $\nu$ (see \eqref{eq:zetaloop}),
$$\Gamma(s)(\zeta-\zeta_l-\zeta_r)(s)=\int (1-\ind_{\gamma\subset M_l}-\ind_{\gamma\subset M_r})
T^s(\gamma)d\nu(\gamma)=\int \ind_{\gamma\cap\delta\neq \varnothing}T^s(\gamma)d\nu(\gamma).$$
Let us identify smoothly a neighborhood of $\delta$ in $M$ with $\delta\times(-\eps,\eps)$. A Brownian path $\gamma_t$ in the neighborhood of $\delta$ projects on $(-\eps,\eps)$ as a semimartingale, that has a local time at 0 denoted by $\ell_t$. 
One can modify the identification $\delta\times(-\eps,\eps)$ so that the quadratic variation of this process is that of linear Brownian motion running at speed 2 (at $\eps=0$).
Let $u\mapsto\tau_u$ be the right continuous inverse of $t\mapsto\ell_t$. Clearly, $u\mapsto\gamma_u=\gamma_{\tau_u}$ is a c\`adl\`ag Markov process on $\delta$. Let us identify the generator of this process. If $\phi,\phi_l,\phi_r$ are as above \eqref{eq:neujump}, and $\phi_{lr}$ is the function on $M$ that restricts to $\phi_l,\phi_r$, then, by the It\^o-Tanaka formula
:
$$t\rightarrow \phi_{lr}(\gamma_t)+\int_0^t (N\phi)(\gamma_t)d\ell_t$$ 
is a local martingale. %
After a time change, it means that the generator of $(\gamma_u)$ is $(-N)$ (Dynkin's formula). %

Hence the $\zeta$-function of $N$: $\zeta_N(r)=\frac{1}{\Gamma(r)}\int_0^\infty \Tr(e^{-uN})u^{r-1}du$ can also be understood in terms of loops. More precisely:
$$\zeta_N(r)=\frac 1{\Gamma(r)}\int \ell(\gamma)^r \ind_{\gamma\cap\delta\neq\varnothing}d\nu(\gamma)$$
where $\ell(\gamma)$ is the local time at $\delta$ of the loop $\gamma$. This corresponds to rerooting loops that cross $\delta$ uniformly in local time.

Now we can split loops that cross $\delta$ into those that stay in a neighborhood of $\delta$, say $\delta^\eps\simeq\delta\times(-\eps,\eps)$, and the macroscopic loops that exit $\delta^\eps$. The mass of such macroscopic loops in $\nu$ is finite (using here the assumption that $\Lap$ has a trivial kernel). Hence:
$$\int \ell^r\ind_{\gamma\cap\delta\neq\varnothing,\gamma\subsetneq \delta^\eps} d\nu(\gamma)$$
and 
$$\int T^s\ind_{\gamma\cap\delta\neq\varnothing,\gamma\subsetneq \delta^\eps} d\nu(\gamma)$$
are convergent at $r=s=0$, where they take the same value: $\nu\{\gamma:\gamma\cap\delta\neq\varnothing,\gamma\subsetneq \delta^\eps\}$. The loops contained in $\delta^\eps$ contribute to the local constant $C_\delta$. This concludes the proof.
\end{proof}

This can be easily modified if $\Lap$ (and hence $N$) has a nontrivial kernel, by using ${\det}'(-\Lap)$, ${\det}'(N)$ etc\dots

Let us illustrate this in the simple situation of the real line splitting  $\C$ in two half-planes $\H$, $-\H$ (this is not compact, but gives the correct local behaviour). The real and imaginary part of the Brownian motion $X$ in $\C$ are independent linear Brownian motions. Taking the trace of the complex Brownian motion on the real line, one gets the real process $Y_u=\Re X_{\tau_u}$ where $\tau$ is the right-continuous inverse of the local time at 0 of $\Im X$. Hence $Y$ is obtained as the subordination of a linear Brownian motion by an independent $1/2$-stable subordinator, so that $Y$ is a symmetric 1-stable L\'evy process, i.e. a Cauchy process. The infinitesimal generator of $Y$ is known to be the first-order pseudo-differential operator $-(-\Lap)^{1/2}$, where $\Lap$ is the Laplacian on $\R$. The transition densities of $Y$ are:
$$q_t(x,y)dy=\frac{\pi^{-1}tdy}{t^2+(x-y)^2}$$
giving a nice short-time expansion on the diagonal, which replaces \eqref{eq:pleijel}.

\section{A variation formula}\label{Sec:varform}

In this appendix, we evaluate $(L_ns_\zeta)/s_\zeta$, which is crucial to the construction of Virasoro representations (with general $c$, Theorem \ref{Thm:Virrep}). Notation is as in the beginning of  \ref{ssec:Virgen}.

This is a rather concrete problem: we have to evaluate the variation of an explicit functional of the metric (the $\zeta$-regularized determinant) under an explicit deformation of the metric (encoded by $\ell_n$); since this deforms the complex structure and not the actual metric, we compensate by the counterterm $s_\zeta(\H_t,\tilde g_t)$. The argument proceeds in a few steps.
 
{\bf  Surgery.}
 
First we observe that $s_\zeta$ does not depend on the choice of local coordinates, or for that matter the position of marked points. Consequently $L_ns_\zeta=0$ if $n\geq -1$. For $n\leq -2$, we need to evaluate
$$\lim_{t\rightarrow 0}\frac{s_\zeta(\Sigma_t,g_t)s_\zeta(\H_0,\tilde g_0)}{s_\zeta(\Sigma_0,g_0)s_\zeta(\H_t,\tilde g_t)}$$
Recall that this ratio depends only on the complex structures and not the choices of metrics. We use analytic surgery to work on a fixed contour. With notations as above, let $\gamma^+=z^{-1}(C(0,r))$, so that $\Sigma_t,\H_t$ are identified inside $\gamma^+$ and $\Sigma_t$, ${\H}_t$ are constant outside of $\gamma^+$. Let $N_t$, $\tilde N_t$ be the Neumann jump operators along $\gamma^+$ on $\Sigma_t$, ${\H}_t$ respectively. By applying analytic surgery (Theorem \ref{Thm:surgery}) to all four Laplacian determinants and noticing pairwise cancellations, we have:
 \begin{equation}\label{eq:Neuratio}
 \left(
 \frac{s_\zeta(\Sigma_t,g_t)s_\zeta(\H_0,\tilde g_0)}{s_\zeta(\Sigma_0,g_0)s_\zeta(\H_t,\tilde g_t)}\right)^{-2}
 =\frac{\det_\zeta(N_t)\det_\zeta(\tilde N_0)}{\det_\zeta(N_0)\det_\zeta(\tilde N_t)}
 \end{equation}
The Neumann operators are pseudodifferential operators built from Poisson kernels; we then study the variation of these kernels. 

{\bf Variation of the Dirichlet-to-Neumann operator.}

We start from a semidisk neighborhood of $X$ in $\Sigma_0$, to which are associate a Poisson and a Dirichlet-to-Neumann operator on the semicircle. Then we deform the complex structure inside the disk (according to $\ell_n$); the goal here is to quantify the resulting variation of these operators.

Let us identify a neighborhood of $X$ in $\Sigma_0$ with a semidisk via the local coordinate $z$. For notational simplicity we assume $r=1$. The Poisson operator extending real functions on $\gamma^+$ to harmonic functions on $D^+(0,1)$ vanishing on $(-1,1)$ is:
\begin{equation}\label{eq:Poissonop}
(Pf)(w)=\oint_\gamma f(z)\Re\left(\frac{z+w}{z-w}\right)\frac{dz}{2i\pi z}
\end{equation}
where $f$ is extended to the lower unit semicircle by $f(\bar z)=-f(z)$. We can represent the deformation by a variation of the complex structure of the unit disk (this is the same deformation for $\Sigma_t$ and $\H_t$). Let us choose $\phi_t$ mapping conformally the deformed disk to the standard unit disk so that $\phi_t(\bar z)=\overline{\phi_t(z)}$ (this leaves one degree of freedom). Notice that by Schwarz reflection, $\phi_t$ extends analytically across the unit circle. Then by a change of variable:
\begin{equation}\label{eq:Poissondefo}
(P_tf)(w)=\oint_\gamma f(\phi_t^{-1}(z))\Re\left(\frac{z+\phi_t(w)}{z-\phi_t(w)}\right)\frac{dz}{2i\pi z}
=\oint_\gamma f(z)\Re\left(\frac{\phi_t(z)+\phi_t(w)}{\phi_t(z)-\phi_t(w)}\right)\frac{d\phi_t(z)}{2i\pi\phi_t(z)}
\end{equation}
where $P_t$ denotes the Poisson operator (for $w$ close to $\gamma$) relative to the deformed complex structure. 

Now by definition of the deformation one can find a function $\psi_t$ which is smooth in $t$ and analytic from the deformed disk to a neighborhood of the unit disk, with expansion: $\psi_t(z)=z+t z^{n+1}+o(t)$. By the argument principle, it is injective on the disk for small $t$. Then $\phi_t =h_t\circ\psi_t$ for some analytic function $h_t$ depending smoothly on $t$. Since $\phi_t$ maps the unit circle to itself, we get $h_t(z)=z-tz^{-n+1}+o(t)$ (recall that $n\leq -2$) and 
$$\phi_t(z)=z(1+t(z^{n}-z^{-n}))+o(t)$$
near the unit circle. Together with \eqref{eq:Poissondefo}, this gives an expression for the derivative ${\frac{d}{dt}}_{|t=0}P_t$:
$${\frac{d}{dt}}_{|t=0}
(P_tf)(w)=\oint f(z)\Re(Q(z,w))\frac{dz}{2i\pi z}$$
where
\begin{align*}
Q(z,w)&=\frac{z+w}{z-w}\left(\frac{z^{n+1}-z^{1-n}+w^{n+1}-w^{1-n}}{z+w}-\frac{z^{n+1}-z^{1-n}-w^{n+1}+w^{1-n}}{z-w}+n(z^n+z^{-n})\right)\\
&=\frac{1}{z-w}\left(z^{n+1}-z^{1-n}+w^{n+1}-w^{1-n}-(z+w)((z^n+\cdots+w^n)+\frac{z^{2-n}+\cdots+w^{2-n}}{zw})
+n(z^n+z^{-n})(z+w)\right)
\end{align*}
We check that the singularity at $z=w$ is removable and that $Q(z,w)\in\C[z,z^{-1},w,w^{-1}]$; consequently, $P_t:L^2(\gamma^+)\rightarrow L^2_{loc}(D^+\setminus\{0\})$ and $\partial_nP_t:L^2(\gamma^+)\rightarrow L^2(\gamma^+)$ are smooth kernel operators ($\partial_n$ is the inward pointing normal derivative). We have for $w\in\gamma^+$:
$$(\partial_nP_tf)(w)
=\oint_\gamma f(z)\Re\left(-w\phi'_t(w)\frac{2\phi_t(z)}{(\phi_t(z)-\phi_t(w))^2}\right)\frac{d\phi_t(z)}{2i\pi\phi_t(z)}
=\oint_\gamma f(z)\Re\left(-zw\frac{\phi'_t(z)\phi_t'(w)}{(\phi_t(z)-\phi_t(w))^2}\right)\frac{dz}{i\pi z}
$$
since $\phi'(z)z/\phi_t(z)$ is real on the circle; $\partial_nP_t$ is symmetric w.r.t. the Lebesgue measure on the circle. Then the variation of the Dirichlet-to-Neumann operator $\partial_nP_t$ is given by
$${\frac{d}{dt}}_{|t=0}(\partial_nP_tf)(w)=\oint f(z)\Re\left(R(z,w)\right)\frac{dz}{i\pi z}$$
where
$$
R(z,w)=\frac{-zw}{(z-w)^2}\left((n+1)(z^n+w^n)+(n-1)(z^{-n}+w^{-n})-2\frac{z^{n+1}-w^{n+1}}{z-w}+2\frac{z^{1-n}-w^{1-n}}{z-w}\right)$$
We have (recall that $n\leq -2$)
\begin{align*}
2\frac{z^{1-n}-w^{1-n}}{z-w}-(1-n)(z^{-n}+w^{-n})\\
=(z^{-n}+w^{-n})+(z^{-n-1}w+w^{-n-1}z)+\cdots+(w^{-n}+z^{-n})-(1-n)(z^{-n}+w^{-n})\\
=(w-z)(z^{-n-1}-w^{-n-1})+(w^2-z^2)(z^{-n-2}-w^{-n-2})+\cdots+(w^{-n-1}-z^{-n-1})(z-w)
\end{align*}
and
we check that if $|z|=|w|=1$, 
$$\overline{\frac{-zw}{(z-w)^2}\left((n+1)(z^n+w^n)-2\frac{z^{n+1}-w^{n+1}}{z-w}\right)}=
\frac{-zw}{(z-w)^2}\left((n-1)(z^{-n}+w^{-n})+2\frac{z^{1-n}-w^{1-n}}{z-w}\right)$$

Finally we obtain for $z,w\in\gamma$:
\begin{align*}
\Re(R(z,w))&=\Re\left(2zw\left(\sum_{k=0}^{-n}\frac{z^k-w^k}{z-w}\frac{z^{-n-k}-w^{-n-k}}{z-w}
\right)\right)\\
&=\Re\left(2zw\sum_{i+j+k+l=-n-2}z^iw^jz^kw^l\right)=\Re\left(2zw\sum_{i+j=-n-2}(i+1)(j+1)z^iw^j\right)
\end{align*}
viz. an explicit expression for the kernel of ${\frac{d}{dt}}_{|t=0}\partial_nP_t$.

{\bf A trace evaluation.}

We have studied $K={\frac{d}{dt}}_{|t=0}\partial_nP_t:L^2(\gamma^+)\rightarrow L^2(\gamma^+)$, a trace class integral operator with smooth kernel (also denoted by $K$) given by
$$K(w,z)=\frac 1\pi\Re(R(z,w)+R(\bar z,w))$$
w.r.t. length on $\gamma^+$ (here we need to write the kernel on the semicircle $\gamma^+$ and not on the circle $\gamma$). Let $T:L^2(\gamma^+)\rightarrow L^2(\gamma^+)$ be another integral operator with bicontinuous kernel also denoted by $T$. We wish to evaluate
$$\Tr(TK)=\int_{\gamma^+}(TK)(z,z)\frac{dz}{iz}=\int_{\gamma^+}\int_{\gamma^+}T(z,w)K(w,z)\frac{dz}{iz}\frac{dw}{iw}$$
Remark that for $m\geq 0,k\in\Z$, by differentiating the Poisson kernel \eqref{eq:Poissonop} we see:
\begin{align*}
\partial_x^{m+1}(Pf)(0)&=(m+1)!\oint f(z)\Re(z^{-m-1})\frac{dz}{i\pi z}=0\\
\partial_x^m\partial_y(Pf)(0)&=-(m+1)!\oint f(z)\Im(z^{-m-1})\frac{dz}{i\pi z}
\end{align*}
where $w=x+iy$ (since $\partial_x^i\partial_y^{j+2}Pf=-\partial_x^{i+2}\partial^j_yPf$, this evaluates all partial derivatives of $Pf$ at $0$). Consequently, if $(z,w)\mapsto f(z,w)$ is biharmonic on $D(0,1)^2$ (with $f(\bar z,w)=f(z,\bar w)=-f(z,w)$), $i,j\geq 0$, we have:
\begin{align*}
\oint\oint f(z,w)\Re(z^{i+1}w^{j+1})\frac{dw}{i\pi w}\frac{dz}{i\pi z}
&=\oint\oint f(z,w)\Re(z^{i+1})\Re(w^{j+1})\frac{dw}{i\pi w}\frac{dz}{i\pi z}-\oint\oint f(z,w)\Im(z^{i+1})\Im(w^{j+1})\frac{dw}{i\pi w}\frac{dz}{i\pi z}\\
&=-\frac{1}{(i+1)!(j+1)!}\partial_{x_1}^i\partial_{y_1}\partial_{x_1}^{j}\partial_{y_2}f(0,0)
\end{align*}
where $z=x_1+iy_2$, $w=x_2+iy_2$. If $h(u)=g(u,u)$, $(u,v)\mapsto g(u,v)$ regular enough, we have trivially:
$$\frac 1{k!}h^{(k)}(u)=\sum_{i+j=k}\frac{\partial_u^i\partial_v^j g}{i!j!}(u,u)$$
and consequently if we set $h(x)=\partial_{y_1}\partial_{y_2}f(x,x)$, we obtain:
$$
\oint\oint f(z,w)\Re(R(z,w))\frac{dw}{i\pi w}\frac{dz}{i\pi z}
=-\frac{2}{(-n-2)!}\partial_x^{-n-2}h(0)
$$
In conclusion, if $T$ has biharmonic extension $f$ to $D^2$ with $f(\bar z,w)=f(z,\bar w)=-f(z,w)$, we have
\begin{equation}\label{eq:tracevar}
\Tr(TK)=-\frac{\pi}{(-n-2)!}\partial_x^{-n-2}h(0)
\end{equation}
Notice the factor 2 coming from $\int_{\gamma^+}\oint=\frac 12\oint\oint$, given the symmetries.

{\bf Conclusion.}

Now we are in position to evaluate the derivative of \eqref{eq:Neuratio}. We have $K=\frac{d}{dt}N_t=\frac{d}{dt}\partial_nP_t$ (as the outside of the disk is unchanged) and the RHS is an integral operator with smooth kernel. More generally, $P_t-P_0$ has kernel (w.r.t. to length on the unit circle):
$$(z,w)\mapsto \frac 1\pi\Re\left(\frac{\phi_t(z)+\phi_t(w)}{\phi_t(z)-\phi_t(w)}\frac{z\phi_t'(z)}{\phi_t(z)}-\frac{z+w}{z-w}\right)$$
which is smooth as the singularity at $z=w$ is removable. Hence $N_t-N_0$ has a smooth kernel and $N_t^{-1}(z,w)=-G_{\Sigma_t}(z,w)$ for $z,w$ on the unit semicircle. Consequently $N_0^{-1}(N_t-N_0)$ is trace class on $L^2(\gamma^+)$ and
$${\det}_\zeta(N_t)={\det}_\zeta(N_0){\det}_F(\Id+N_0^{-1}(N_t-N_0))$$
where $\det_F$ is a Fredholm determinant. It follows that
$${\frac{d}{dt}}_{|t=0}\log{\det}_\zeta(N_t)=\Tr\left({N_0^{-1}\frac{d}{dt}}_{|t=0}N_t\right)$$
and the same statement holds for $\tilde N$ (with the natural definition for the trace of an operator with continuous kernel). This can be seen directly from the definition of ${\det}_\zeta$ and the Duhamel expansion of the semigroup generated by $N_t$ for small $t$. Consequently, 
$${\frac{d}{dt}}_{|t=0}\frac{\det_\zeta(N_t)\det_\zeta(\tilde N_0)}{\det_\zeta(N_0)\det_\zeta(\tilde N_t)}=
\Tr\left((N_0^{-1}-\tilde N_0^{-1}){\frac{d}{dt}}_{|t=0}N_t\right)$$
Set $f(z,w)=(N_0^{-1}-\tilde N_0^{-1})(z,w)=-G_{\Sigma_0}(z,w)+G_{\H_0}(z,w)$, which extends biharmonically to $D^+(0,1)^2$, and $T:L^2(\gamma^+)\rightarrow L^2(\gamma^+)$ the corresponding integral operator. Then by surgery (Theorem \ref{Thm:surgery})
$$\frac{L_ns_\zeta}{s_\zeta}=-\frac 12\Tr(TK)$$
Now we can use the evaluation \eqref{eq:tracevar}. For $z,w\in (-1,1)$, we have
$$\partial_{n_z}\partial_{n_w}f(z,w)=-H_{\Sigma_0}(z,w)+H_{\H_0}(z,w)$$
where $H$ is the Poisson excursion kernel \eqref{eq:Poissonexcdef} and $\pi\partial_{n_z}\partial_{n_w}f(z,z)=-\frac 16S_{\Sigma_0}(z)$ where $S$ is the Schwarzian connection \eqref{Sconn}. We obtain ($n\leq -2$):
$$\frac{L_ns_\zeta}{s_\zeta}=\frac 1{(-n-2)!}\partial_x^{-n-2}\frac{S_{\Sigma_0}(x)}{12}$$
Remark that $S_\Sigma(X)$ is defined in terms of the local coordinate at $X$; as a Schwarzian connection, it depends on this local coordinate through its 3-jet. From the expression of the Bergman connection in terms of theta functions, it is clear that $(\Sigma,X,z)\mapsto S_\Sigma(X)$, where $z$ is a $3$-jet of local coordinate at $X$, is smooth on ${\mc T}_3$.

\bibliographystyle{abbrv}
\bibliography{biblio}

-----------------------

\noindent Columbia University\\
Department of Mathematics\\
2990 Broadway\\
New York, NY 10027

\end{document}